\newtheorem{theoremn}{Theorem}
\newtheorem{proposition}{Proposition}[section]
\newtheorem{lemma}[proposition]{Lemma}
\newtheorem{corollary}[proposition]{Corollary}
\newtheorem{theorem}[proposition]{Theorem}
\theoremstyle{definition}
\newtheorem{definition}[proposition]{Definition}
\newtheorem{example}[proposition]{Example}
\newtheorem{examples}[proposition]{Examples}
\newtheorem{remark}[proposition]{Remark}
\newtheorem{remarks}[proposition]{Remarks}
\newcommand{\thlabel}[1]{\label{th:#1}}
\newcommand{\thref}[1]{Theorem~\ref{th:#1}}
\newcommand{\selabel}[1]{\label{se:#1}}
\newcommand{\seref}[1]{Section~\ref{se:#1}}
\newcommand{\lelabel}[1]{\label{le:#1}}
\newcommand{\leref}[1]{Lemma~\ref{le:#1}}
\newcommand{\prlabel}[1]{\label{pr:#1}}
\newcommand{\prref}[1]{Proposition~\ref{pr:#1}}
\newcommand{\colabel}[1]{\label{co:#1}}
\newcommand{\coref}[1]{Corollary~\ref{co:#1}}
\newcommand{\relabel}[1]{\label{re:#1}}
\newcommand{\reref}[1]{Remark~\ref{re:#1}}
\newcommand{\exlabel}[1]{\label{ex:#1}}
\newcommand{\exref}[1]{Example~\ref{ex:#1}}
\newcommand{\delabel}[1]{\label{de:#1}}
\newcommand{\deref}[1]{Definition~\ref{de:#1}}
\newcommand{\eqlabel}[1]{\label{eq:#1}}
\newcommand{\equref}[1]{(\ref{eq:#1})}
\newcommand{\amp}{{\rm amp}}
\newcommand{\End}{{\rm End}}
\newcommand{\Aut}{{\rm Aut}\,}
\newcommand{\im}{{\rm Im}\,}
\newcommand{\id}{{\rm id}\,}
\def\NN{{\mathbb N}}
\def\ZZ{{\mathbb Z}}
\def\QQ{{\mathbb Q}}
\newcommand{\Cc}{\mathcal{C}}
\newcommand{\Oo}{\mathcal{O}}
\def\*C{{}^*\hspace*{-1pt}{\Cc}}
\def\text#1{{\rm {\rm #1}}}
\def\fse{\mathcal{FSE}}
\numberwithin{equation}{section}
\begin{document}
\title[Set-theoretic solutions of the Frobenius-Separability equation]
{The set-theoretic Yang-Baxter equation, Kimura semigroups and functional graphs}

\author{A. L. Agore}
\address{Simion Stoilow Institute of Mathematics of the Romanian Academy, P.O. Box 1-764, 014700 Bucharest, Romania and Vrije Universiteit Brussel, Pleinlaan 2, B-1050 Brussels, Belgium}
\email{ana.agore@gmail.com and ana.agore@vub.be}

\author{A. Chirvasitu}
\address{Department of Mathematics, University at Buffalo
Buffalo, NY 14260-2900, USA}
\email{achirvas@buffalo.edu}

\author{G. Militaru}
\address{Faculty of Mathematics and Computer Science, University of Bucharest, Str. Academiei 14, RO-010014 Bucharest 1, Romania
and Simion Stoilow Institute of Mathematics of the Romanian Academy, P.O. Box 1-764, 014700 Bucharest, Romania}
\email{gigel.militaru@fmi.unibuc.ro and gigel.militaru@gmail.com}

\thanks{A.L.A. and G.M. are partially supported by a grant of the Ministry of Research, Innovation and Digitization, CNCS/CCCDI --
UEFISCDI, project number PN-III-P4-ID-PCE-2020-0458, within PNCDI III. A.C. is partially supported by NSF grant DMS-2001128.}

\subjclass[2020]{16T25, 20M07, 11P81, 11P84, 05C20, 05C05, 20E08, 08A35, 20B25, 20B27}
\keywords{Yang-Baxter equation, connected functional graphs, automorphism groups, rooted trees, Kimura semigroups, mapping types}

\maketitle

\begin{abstract}
We prove that the category of solutions of the set-theoretic Yang-Baxter equation of Frobenius-Separability (FS) type is equivalent to the category of pointed Kimura semigroups. As applications, all involutive, idempotent, nondegenerate, surjective, finite order, unitary or indecomposable solutions of FS type are classified. For instance, if $|X| = n$, then the number of isomorphism classes of all such solutions on $X$ that are (a) left non-degenerate, (b) bijective, (c) unitary or (d) indecomposable and left-nondegenerate is: (a) the Davis number $d(n)$, (b) $\sum_{m|n} \, p(m)$, where $p(m)$ is the Euler partition number, (c) $\tau(n) + \sum_{d|n}\left\lfloor \frac d2\right\rfloor$, where $\tau(n)$ is the number of divisors of $n$, or (d) the Harary number $\mathfrak{c} (n)$. The automorphism groups of such solutions can also be recovered as automorphism groups $\mathrm{Aut}(f)$ of sets $X$ equipped with a single endo-function $f:X\to X$. We describe all groups of the form $\mathrm{Aut}(f)$ as iterations of direct and (possibly infinite) wreath products of cyclic or full symmetric groups, characterize the abelian ones as products of cyclic groups, and produce examples of symmetry groups of FS solutions not of the form $\mathrm{Aut}(f)$.
\end{abstract}


\section*{Introduction}
Numerous families of equations originating in physics have received special attention in the mathematical community due to their ubiquitous and far-reaching applications. Some of the more popular, to name a few, include the Yang-Baxter equation \cite{baxter, Yang}, the pentagon equation \cite{Baaj, bieden} and the tetrahedron (or Zamolodchikov) equation \cite{zam2}. A rich variety of topics are now linked to the study of their solutions: for instance, any finite-dimensional quantum group is characterized by an invertible solution of the pentagon equation \cite{MilG}, while solutions of the tetrahedron equation have very important algebro-geometric properties \cite{BVSS, dimakis, TDV}.

The lion's share of the attention has doubtless accrued to the celebrated quantum Yang-Baxter equation, introduced independently by Yang \cite{Yang} and Baxter \cite{baxter} in their work on theoretical physics and statistical mechanics. Since then, the equation has proved to play a central role in various fields, both in mathematical physics and in pure mathematics: integrable systems, conformal field theory, quantum groups, invariants of knots and three-dimensional manifolds, tensor categories, combinatorics, and so on.

Despite volumes of work devoted to it over the past 50 years (see \cite{lamrad} and the references therein), significant classification results are still lacking in the original setting of vector spaces. To fill this gap, Drinfel'd \cite[Section 9]{dr} recasts the problem at the level of sets. In other words, for a given set $X$, Drinfel'd asked for the classification of all set-theoretic solutions of the quantum Yang-Baxter equation, i.e. all maps $R: X\times X \to X\times X$ satisfying the equation:
\begin{equation}\eqlabel{braideqint}
  R^{12}R^{23}R^{12} = R^{23}R^{12}R^{23}
\end{equation}
as maps $X\times X \times X \to X\times X \times X$ under the usual composition, where $R^{12} = R \times {\rm Id}_X$ and $R^{23} = {\rm Id}_X \times R$.\footnote{This equation is typically referred to as the \emph{braid equation} and it is equivalent to the Yang-Baxter equation as appeared in \cite{dr} in the sense that $R$ is a solution of \equref{braideqint} if and only if $\tilde{R} := \tau_X \circ R$ is a solution of $\tilde{R}^{12}\tilde{R}^{13}\tilde{R}^{23} = \tilde{R}^{23}\tilde{R}^{13}\tilde{R}^{12}$, where $\tau_X$ is the flip map.}

Even in this simplified setting the problem turns out to be very difficult. The pioneering articles \cite{etingof, GIVB, LuYZ, WeXu} have decisively influenced the development of the theory, which is now overwhelmingly concerned with non-degenerate solutions satisfying additional assumptions such as bijectivity, involutivity, unitarity, etc. (we refer to \seref{prel} for precise definitions). \emph{Bijective 1-cocycles} were first considered in \cite{EPGS} and it was proven in \cite{etingof} that the category of non-degenerate involutive solutions of the set-theoretic Yang-Baxter equation is equivalent to that of bijective $1$-cocycles with coefficients in a free abelian group \cite[Theorem 2.10]{etingof}. Several equivalent reinterpretations of this notion as well as relevant generalizations (see, e.g. \cite{brz}) have emerged and a vast literature has developed around these new algebraic gadgets. We refer to \cite[Section 3]{bai_guo} and \cite[Remark 5.6]{cm1} for a complete categorical picture of the connections between the various structures involved in studying the set-theoretic Yang-Baxter equation. Upon removing the aforementioned restrictions, the problem in Drinfel'd's original formulation is still wide open.

As opposed to the well-documented theory of non-degenerate solutions, other classes of solutions have only recently been considered (see, for instance, \cite{BKSV, Verwimp, SV, cat1}). The present paper adopts this same approach of considering solutions which, a priori, do not fulfill the classical constraints (non-degeneracy, bijectivity and so on). More specifically, we introduce a new class of solutions of the Yang-Baxter equation whose structure is completely determined and various classification results are proved using combinatorial invariants. Furthermore, the automorphism groups of these solutions are described and the groups that appear as automorphism groups of solutions from this class are identified.

To this end, we use a certain variety of semigroups\footnote{The key role played by semigroup theory in studying the Yang-Baxter equation has been highlighted for the first time in \cite{GIVB}.} introduced by Kimura \cite{Kim2} together with the functional digraphs of Harary \cite{harary} to study a new class of solutions of the Yang-Baxter equation, namely those of \emph{Frobenius-Separability (FS) type}, i.e. functions $R: X\times X \to X\times X$ satisfying the equation
\begin{equation} \eqlabel{FSprima}
R^{12}R^{23} = R^{23}R^{13} = R^{13} R^{12}.
\end{equation}

The linear version of equation \equref{FSprima} surfaced for the first time in the work of Beidar, Fong and Stolin \cite[Lemma 3.6]{BeidarFS97} on Frobenius and Hopf algebras. It was observed in \cite{BeidarFS97} that any solution of \equref{FSprima} is automatically a solution of the quantum Yang-Baxter equation. The aforementioned equation has been thoroughly investigated in \cite{CIM} and \cite[Chapter 8]{CMZ}, under the name of \emph{Frobenius-Separability equation}. The name is motivated by the main results in \cite[Theorem 4.1 and Theorem 4.3]{CIM} which state that defining a finite-dimensional Frobenius (resp. separable) algebra is equivalent to providing a solution of the equation \equref{FSprima} which moreover fulfills the normalization Frobenius (resp. separability) condition. Furthermore, classifying these two important classes of associative algebras comes down to classifying the solutions of the Frobenius-Separability equation. Unfortunately, the variety of solutions of the equation \equref{FSprima} in its linear form is extremely vast and therefore, following Drinfel'd \cite{dr}, this paper is concerned with the study of its set-theoretic solutions. Weaving together structural results proved by Kimura \cite{Kim2} on rectangular semigroups with functional graphs and their associated Davis/Harary numbers \cite{davis, harary} as well as classical number theory techniques, we classify the solutions of \equref{FSprima} and compute their automorphism groups.

The paper is organized as follows. In \seref{prel} we recall the main ingredients used throughout the paper, e.g. directed graphs associated to a function, Davis numbers and {\it Kimura semigroups}, i.e. those satisfying the identity $xyz = xz$. In this context, the structure theorems on this class of semigroups as proved by Kimura \cite[Section 2]{Kim2} will central to our approach (\thref{structura} and \cite[Theorem 2]{Kim2}).

\seref{sect2} is devoted to an in depth study of equation \equref{FSprima}; throughout, a map $R: X\times X \to X\times X$ satisfying \equref{FSprima} will be called briefly a \emph{solution on $X$}. We denote by ${\mathcal FSE}$ the category of all solutions, i.e. objects of $\mathcal{FSE}$ are pairs $(X, \, R)$, where $X$ is a set and $R$ a solution on $X$. Similarly, we denote by ${\rm KimSemigr}^{\bullet}$ the category of all \emph{pointed Kimura semigroups}: the objects are pairs $(X, \, \vartheta)$, where $(X, \, \cdot)$ is a Kimura semigroup and $\vartheta : X \to X$ is a \emph{quasi-endomorphism} of $(X, \, \cdot)$, i.e. $y \cdot \vartheta (x \cdot y) = y \cdot \vartheta (y)$, for all $x$, $y\in X$. Our first result is \thref{refqm}:

\begin{theoremn}\thlabel{refqmintro}
  There exists an equivalence of categories between the category $\mathcal{FSE}$ of all solutions and the category ${\rm KimSemigr}^{\bullet}$ of all pointed Kimura semigroups.

  Explicitly, a map $R \colon X\times X \to X\times X$ is a solution on a set $X$ if and only if there exists a Kimura semigroup structure $(X, \cdot)$ on $X$ and a quasi-endomorphism $\vartheta : X \to X$ of $(X, \cdot)$ such that $R = R_{(\cdot, \, \vartheta)}$, where $R_{(\cdot, \, \vartheta)} \colon X\times X \to X\times X$
  is given for any $x$, $y \in X$ by:
  \begin{equation*}\eqlabel{toatesolintro}
    R_{(\cdot, \, \vartheta)} (x, \, y) = \bigl( x\cdot y, \, y\cdot \vartheta (x) \bigl).
  \end{equation*}
\end{theoremn}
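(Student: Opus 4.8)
The plan is to decompose a candidate solution into its two components and read off the Frobenius--Separability identities coordinate-by-coordinate. Writing $R(x,y) = (\sigma(x,y),\, \tau(x,y))$ for maps $\sigma,\tau\colon X\times X\to X$, I would expand the three composites $R^{12}R^{23}$, $R^{23}R^{13}$ and $R^{13}R^{12}$ as functions $X^3\to X^3$ and compare them entry by entry. Equating $R^{12}R^{23}$ with $R^{13}R^{12}$ yields, in the first coordinate, the associativity of $\sigma$, and in the remaining two coordinates the identities $\tau(x,\sigma(y,z)) = \tau(x,y)$ and $\tau(\sigma(x,y),z) = \tau(y,z)$. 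Equating $R^{12}R^{23}$ with $R^{23}R^{13}$ yields $\sigma(x,\sigma(y,z)) = \sigma(x,z)$ together with $\tau(x,\sigma(y,z)) = \sigma(y,\tau(x,z))$ and $\tau(y,\tau(x,z)) = \tau(y,z)$. Writing $x\cdot y := \sigma(x,y)$, the first-coordinate relations say precisely that $(X,\cdot)$ is associative and satisfies $x(yz)=xz$, hence is a Kimura semigroup.

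The crux is then extracting the quasi-endomorphism. Combining $\tau(x, yz) = \tau(x,y)$ with $\tau(x, yz) = y\cdot\tau(x,z)$ gives $\tau(x,y) = y\cdot\tau(x,z)$ for all $x,y,z$; in particular the right-hand side is independent of $z$, so setting $\vartheta(x) := \tau(x,x)$ produces $\tau(x,y) = y\cdot\vartheta(x)$ and therefore $R = R_{(\cdot,\vartheta)}$. Specializing the remaining relation $\tau(\sigma(x,y),z)=\tau(y,z)$, that is $z\cdot\vartheta(x\cdot y) = z\cdot\vartheta(y)$, to $z=y$ yields exactly the quasi-endomorphism condition $y\cdot\vartheta(x\cdot y) = y\cdot\vartheta(y)$. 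For the converse I would substitute $\tau(x,y)=y\cdot\vartheta(x)$ into the six identities and verify each using associativity, the Kimura rule $xyz=xz$, and the quasi-endomorphism axiom; every check collapses immediately (for instance $\tau(y,\tau(x,z)) = z\cdot\vartheta(x)\cdot\vartheta(y) = z\cdot\vartheta(y)$ by the Kimura rule), so $R_{(\cdot,\vartheta)}$ is always a solution.

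For the categorical statement I would define a morphism $(X,\vartheta)\to(X',\vartheta')$ of pointed Kimura semigroups to be a semigroup homomorphism $f$ with $f(y)\cdot'\vartheta'(f(x)) = f(y)\cdot' f(\vartheta(x))$ for all $x,y$, and then check that this matches the morphisms of solutions: unravelling $(f\times f)\circ R = R'\circ(f\times f)$ gives the homomorphism property in the first coordinate and exactly this compatibility in the second. With the functors $F\colon \fse\to {\rm KimSemigr}^{\bullet}$, $(X,R)\mapsto (X,\cdot,\vartheta)$, and $G$ sending $(X,\cdot,\vartheta)$ to $R_{(\cdot,\vartheta)}$, the computation above shows $G\circ F = \mathrm{Id}$ on the nose, since $\tau(x,y)=y\cdot\vartheta(x)$ recovers $R$ exactly.

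I expect the main subtlety to be the natural isomorphism $F\circ G\cong\mathrm{Id}$: feeding $(X,\cdot,\vartheta)$ through $G$ and back through $F$ returns not $\vartheta$ but $x\mapsto x\cdot\vartheta(x)$, because $\vartheta$ is only detected through the products $y\cdot\vartheta(x)$. The point is that $y\cdot(x\cdot\vartheta(x)) = y\cdot\vartheta(x)$ by the Kimura rule, so the identity map of $X$ is an isomorphism $(X,\cdot,\vartheta)\to(X,\cdot,\, x\mapsto x\cdot\vartheta(x))$ in the category above, and these identity maps assemble into the required natural isomorphism. Choosing the morphism definition so that this ambiguity in $\vartheta$ becomes an honest isomorphism — rather than forcing the stricter and generally false condition $f\circ\vartheta=\vartheta'\circ f$ — is the one place where genuine care is needed.
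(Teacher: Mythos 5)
Your proposal is correct and follows essentially the same route as the paper: expand the three composites coordinatewise (this is the paper's Proposition 2.6), extract $\vartheta$ from the $z$-independence of $y\cdot\tau(x,z)$, and observe that the morphism condition on pointed Kimura semigroups is rigged to match morphisms of solutions. The only (harmless) presentational difference is that you exhibit an explicit quasi-inverse together with the unit isomorphism $\vartheta \mapsto (x\mapsto x\cdot\vartheta(x))$, whereas the paper verifies that its functor is fully faithful and essentially surjective.
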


Moreover, relying on the general structure of Kimura semigroups as described in \cite[Theorem 2]{Kim2} we can further rephrase \thref{refqmintro} in an equivalent manner (\thref{th:refqmbis}), phrased in the combinatorial language of sets and self-maps rather than in terms of semigroups.

\begin{theoremn}\thlabel{th:refqmbisintr}
  Giving a solution $R$ on a set $X$ is equivalent to specifying a quadruple $(h, \, A, \, B, \, \vartheta')$ consisting of: (1) an idempotent self-map $h:X\to X$; (2) two non-empty sets $A$ and $B$ together with projections $\pi_1:\im~h \to A$ and $\pi_2:\im~(h) \to B$ giving an identification $\im~(h) \cong A\times B$; (3) a function $\vartheta' :B\to B$.  With these data, $R$ is defined for any $x$, $y\in X$ by:
  \begin{equation*}
    R(x, \, y) = \bigl((\pi_1 h x, \, \pi_2 h y), \, (\pi_1 h y, \, \vartheta'\pi_2 h x) \bigl).
  \end{equation*}
\end{theoremn}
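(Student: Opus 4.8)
The plan is to derive \thref{th:refqmbisintr} from \thref{refqmintro} together with the structure theory of Kimura semigroups, translating the two ingredients of a pointed Kimura semigroup---the multiplication and the quasi-endomorphism---into the combinatorial quadruple $(h,\,A,\,B,\,\vartheta')$. The first task is to show that a Kimura semigroup structure on $X$ is \emph{the same datum} as an idempotent self-map $h\colon X\to X$ together with a rectangular-band identification $\im h\cong A\times B$.

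For this, I would set $h(x):=x^2$. The identity $xyz=xz$ forces $x^3=x^2$, whence $x^2$ is idempotent and $h$ retracts $X$ onto the set $E$ of idempotents, fixing $E$ pointwise. The same identity gives $x^2y=xy=xy^2$, hence $xy=x^2y^2=h(x)h(y)$ for all $x$, $y$; thus the whole multiplication is recovered from $h$ and from the restriction of the product to $E$. Since $E$ is a band satisfying $xyz=xz$, it is rectangular, and the structure theorem (\thref{structura}) provides an identification $E\cong A\times B$ under which $(a_1,b_1)(a_2,b_2)=(a_1,b_2)$, i.e. $\pi_1(ef)=\pi_1 e$ and $\pi_2(ef)=\pi_2 f$. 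Conversely, any idempotent $h$ with $\im h\cong A\times B$ yields a Kimura semigroup via $x\cdot y:=h(x)h(y)$, associativity and $xyz=xz$ being immediate from the rectangular rule; the two assignments are mutually inverse.

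The heart of the argument is to see that the quasi-endomorphism $\vartheta$ enters only through an induced map $\vartheta'\colon B\to B$. Applying $\pi_2$ to the defining identity $y\cdot\vartheta(x\cdot y)=y\cdot\vartheta(y)$, and using that $xy$, $y\vartheta(xy)$ and $y\vartheta(y)$ all lie in $E$, gives $\pi_2 h\vartheta(xy)=\pi_2 h\vartheta(y)$. As $x$ ranges over $X$ with $y$ fixed, $xy=(\pi_1 h x,\,\pi_2 h y)$ runs through all idempotents with second coordinate $\pi_2 h(y)$, so $\pi_2 h\vartheta$ is constant on each column $A\times\{b\}$ of $E$; taking $y=x$ in the identity yields $\pi_2 h\vartheta(h(x))=\pi_2 h\vartheta(x)$, so the value at an arbitrary $x$ equals the value at the idempotent $h(x)$ and hence depends only on $b=\pi_2 h(x)$. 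This defines $\vartheta'\colon B\to B$ with $\pi_2 h\vartheta=\vartheta'\pi_2 h$. Conversely, given $\vartheta'$ one checks that $\vartheta(x):=(\pi_1 h x,\,\vartheta'\pi_2 h x)$ is a quasi-endomorphism realizing it.

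It then remains to read off $R$. Using $xy=h(x)h(y)$ and $y\vartheta(x)=h(y)h\vartheta(x)$ together with the projection rules, one computes
\begin{equation*}
R_{(\cdot,\,\vartheta)}(x,\,y)=\bigl((\pi_1 h x,\,\pi_2 h y),\,(\pi_1 h y,\,\pi_2 h\vartheta(x))\bigr)=\bigl((\pi_1 h x,\,\pi_2 h y),\,(\pi_1 h y,\,\vartheta'\pi_2 h x)\bigr),
\end{equation*}
which is precisely the asserted formula. In particular $R$ depends on $\vartheta$ only through $\vartheta'$, so the passage from quadruples to solutions is well defined; combined with the two mutually inverse constructions above and the equivalence of \thref{refqmintro}, this yields the stated correspondence. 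I expect the genuine obstacle to be the well-definedness of $\vartheta'$ in the previous paragraph---extracting from the single quasi-endomorphism identity both that $\pi_2 h\vartheta$ is column-constant on $E$ and that its value at $x$ is controlled by $h(x)$---whereas the rectangular-band bookkeeping and the final substitution are routine.
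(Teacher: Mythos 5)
Your proposal is correct and follows essentially the same route as the paper's proof of \thref{th:refqmbis}: take $h(x)=x^2$, identify $\im~h=X^2$ with a rectangular band $A\times B$ via \thref{structura}, and observe that the quasi-endomorphism condition \equref{qmo} is exactly the statement that $\pi_2 h\vartheta$ descends along $\pi_2 h$ to a map $\vartheta'\colon B\to B$. The only difference is one of detail: where the paper asserts the descent of $\vartheta$ briefly, you spell out the column-constancy argument (and the identity $xy=h(x)h(y)$ in place of citing Kimura's structure theorem), which is a welcome elaboration rather than a change of method.
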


The automorphism group of a given solution $(X, \, R) \in \mathcal{FSE}$ is described for both variants of the above theorems. \thref{refqmintro} has numerous applications: for instance, in \coref{invo} we classify all involutive solutions on a given set $X$ and compute their automorphism groups. In particular, if $X$ is a finite set with $|X| = n$, then the number of isomorphism classes of all involutive solutions on $X$ is equal to $\tau(n)$, the number of divisors of $n$. Furthermore, \coref{special_sol} classifies all idempotent solutions: if $X$ is a finite set with $|X| = n$, then the number of isomorphism classes of all idempotent solutions on $X$ is equal to the Euler partition number $p(n)$.
The classification of all non-degenerate solutions on $X$ as given in \thref{th:nedegenerate} can be briefly stated as follows:

\begin{theoremn}\thlabel{th:nedegenerateintro}
Let $X$ be a finite set with $|X| = n$. Then the number of isomorphism classes of all left (resp. right) non-degenerate solutions on $X$ is equal to $d(n)$, the Davis number of $n$ (resp. the Euler partition number $p(n)$).
\end{theoremn}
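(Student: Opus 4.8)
The plan is to read everything off the combinatorial classification of \thref{th:refqmbisintr}: a solution $R$ on $X$ is encoded by a quadruple $(h,A,B,\vartheta')$ via
\[
  R(x,y) = \bigl((\pi_1 h x,\ \pi_2 h y),\ (\pi_1 h y,\ \vartheta' \pi_2 h x)\bigr),
\]
where both coordinates of $R$ take values in $\im h \cong A\times B$. Writing $R(x,y) = (\lambda_x(y),\rho_y(x))$, left non-degeneracy is the requirement that each $\lambda_x\colon y\mapsto (\pi_1 h x,\pi_2 h y)$ be a bijection of $X$, and right non-degeneracy that each $\rho_y\colon x\mapsto (\pi_1 h y,\vartheta' \pi_2 h x)$ be a bijection of $X$. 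So the whole argument is a translation of these two conditions into conditions on the quadruple, followed by a count of the surviving quadruples up to isomorphism.

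First I would treat the left non-degenerate case. Since $\im \lambda_x \subseteq \im h$ for every $x$, bijectivity of a single $\lambda_x$ already forces $\im h = X$; as $h$ is idempotent this gives $h = \id_X$. Next, with $h = \id_X$ the first coordinate $\pi_1 x$ of $\lambda_x(y)$ is independent of $y$, so $\im \lambda_x \subseteq \{\pi_1 x\}\times B$, a set of size $|B|$; bijectivity onto $X\cong A\times B$ then forces $|A| = 1$. Conversely, when $h = \id_X$ and $|A| = 1$ we identify $X$ with $B$ and get $\lambda_x(y) = y$, so every such solution is left non-degenerate with $\vartheta'$ completely unconstrained. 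Hence left non-degenerate solutions on $X$ are exactly the quadruples $(\id_X,\{\ast\},X,\vartheta')$ with $\vartheta'\colon X\to X$ an arbitrary self-map. The right non-degenerate case runs in parallel: $\im \rho_y \subseteq \im h$ forces $h = \id_X$, the constant first coordinate $\pi_1 y$ forces $|A| = 1$, and after identifying $X$ with $B$ the map $\rho_y$ becomes $x\mapsto \vartheta'(x)$; thus $\rho_y$ is bijective for all $y$ precisely when $\vartheta'$ is a permutation of $X$.

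Finally I would pass to isomorphism classes using the equivalence of \thref{refqmintro}–\thref{th:refqmbisintr}. Once $h = \id_X$ and $A$ is a singleton the only remaining datum is $\vartheta'$, and an isomorphism of solutions restricts to a bijection $\phi\colon X\to X$ with $\phi\,\vartheta'_1 = \vartheta'_2\,\phi$; that is, two such solutions are isomorphic iff their $\vartheta'$'s are conjugate endo-functions. Counting left non-degenerate solutions up to isomorphism therefore amounts to counting self-maps $X\to X$ up to conjugacy, which by the material recalled in \seref{prel} is the Davis number $d(n)$. In the right non-degenerate case the same bookkeeping counts permutations of $X$ up to conjugacy; since conjugacy classes in the symmetric group are indexed by cycle type, this is the number of partitions of $n$, i.e. $p(n)$.

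The genuinely delicate point — where I expect the main obstacle to lie — is the forcing step: showing that bijectivity of $\lambda_x$ (resp. $\rho_y$) simultaneously collapses $h$ to the identity and $A$ to a singleton. Both collapses rest on the observation that the relevant coordinate functions land in the proper subset $\im h\cong A\times B$ and have a coordinate frozen by the fixed variable, so one must be careful that the two constraints ($\im h = X$ and $|A|=1$) are extracted in the right order from a single bijectivity hypothesis. Once this reduction is secured, the remaining identification with $d(n)$ and $p(n)$ is purely formal, given the description of the Davis number and of symmetric-group conjugacy classes from \seref{prel}.
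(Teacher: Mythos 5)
Your argument is correct and arrives at the same normal form $R=R_f$ (with $f=\vartheta'$) that the paper does, but by a different mechanism. The paper's proof works at the level of \prref{detFS}: left non-degeneracy makes each $l_x$ injective, so the Kimura identity $x\cdot(y\cdot z)=x\cdot z$ can be cancelled on the left to give $y\cdot z=z$, i.e.\ $(X,\cdot)$ is the right zero semigroup, after which \exref{lrzero} pins down the second component as $f(x)$ (and \equref{26} plays the analogous role in the right non-degenerate case). You instead start from the quadruple description of \thref{th:refqmbis} and extract $h=\mathrm{Id}_X$ and $|A|=1$ from the containments $\im \lambda_x\subseteq\im h$ and $\im\lambda_x\subseteq\{\pi_1 x\}\times B$; this is the same collapse expressed combinatorially, since $h=\mathrm{Id}_X$ together with $A=\{\star\}$ is exactly the statement that the Kimura multiplication is $x\cdot y=y$. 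Your route has the small advantage of treating the left and right cases symmetrically by a single image-containment argument (and it works verbatim for infinite $X$, provided you phrase the $|A|=1$ step as a proper containment rather than a cardinality count), while the paper's cancellation argument is marginally shorter once \prref{detFS} is in hand. The final bookkeeping --- conjugacy classes of arbitrary self-maps giving $d(n)$, of permutations giving $p(n)$ --- is identical in both.
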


Next, a very large class of solutions on $X$ (which includes surjective, finite order or unitary solutions) is classified in \thref{th:main}, and their automorphism groups are described. Summarizing:

\begin{theoremn}\thlabel{th:mainintro}
  Let $R: X\times X \to X\times X$ be a solution such that $\pi_1 \circ R : X\times X \to X$ is surjective. There exist two non-empty sets $A$ and $B$, a map $\omega: B\to B$ and an isomorphism of solutions $(X, \, R) \cong (A \times B, \, R_{\omega})$, where
    $R_{\omega}: (A\times B)^2 \to (A\times B)^2$ is the solution on $A\times B$ given for any $a_1$, $a_2 \in A$ and $b_1$, $b_2 \in B$ by:
    \begin{equation*}\eqlabel{main1intro}
      R_{\omega} \bigl( (a_1, \, b_1), \, (a_2, \, b_2) \bigl) = \bigl( (a_1, \, b_2), \, (a_2, \, \omega(b_1) ) \bigl).
    \end{equation*}
\end{theoremn}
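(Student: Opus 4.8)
The plan is to read the statement off directly from the combinatorial classification in \thref{th:refqmbisintr}, the only real work being to show that the surjectivity hypothesis collapses the idempotent $h$ to the identity. First I would apply \thref{th:refqmbisintr} to the given solution $R$ to produce a quadruple $(h, \, A, \, B, \, \vartheta')$, with structure projections $\pi_1 \colon \im h \to A$ and $\pi_2 \colon \im h \to B$ realizing $\im h \cong A\times B$, such that for all $x$, $y\in X$
\begin{equation*}
  R(x, \, y) = \bigl((\pi_1 h x, \, \pi_2 h y), \, (\pi_1 h y, \, \vartheta'\pi_2 h x) \bigr).
\end{equation*}
Writing $p \colon X\times X \to X$ for the projection onto the first factor (so that the hypothesis is that $p\circ R$ is surjective), the formula above gives $(p\circ R)(x, \, y) = (\pi_1 h x, \, \pi_2 h y)$, an element of $\im h \subseteq X$.

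Next I would pin down the image of $p\circ R$. Since $h$ is idempotent with $\im h\cong A\times B$, the surjection $h\colon X\to \im h$ followed by $\pi_1$ (resp. $\pi_2$) is onto $A$ (resp. $B$); as $x$ and $y$ may be chosen independently, the image of $p\circ R$ is therefore all of $A\times B=\im h$. The hypothesis that $p\circ R$ be onto $X$ then forces $\im h = X$, i.e. $h$ is surjective. Combined with idempotence this yields $h=\mathrm{id}_X$: since every $y\in X$ is of the form $h(x)$, one has $h(y)=h^2(x)=h(x)=y$. This is the crux of the argument; everything else is bookkeeping.

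With $h=\mathrm{id}_X$, the pair $(\pi_1, \, \pi_2)$ is a bijection $\phi\colon X\xrightarrow{\ \sim\ } A\times B$, $\phi(x)=(\pi_1 x, \, \pi_2 x)$. Substituting $x=(a_1, \, b_1)$ and $y=(a_2, \, b_2)$ into the displayed formula and setting $\omega:=\vartheta'$ turns it into
\begin{equation*}
  R\bigl((a_1, \, b_1), \, (a_2, \, b_2)\bigr) = \bigl((a_1, \, b_2), \, (a_2, \, \omega(b_1))\bigr),
\end{equation*}
which is exactly $R_\omega$. Finally I would record that $(A\times B, \, R_\omega)$ is genuinely an object of $\mathcal{FSE}$ --- it is the solution attached by \thref{th:refqmbisintr} to the quadruple $(\mathrm{id}_{A\times B}, \, A, \, B, \, \omega)$ --- and that $\phi$ intertwines the two solutions, $(\phi\times\phi)\circ R = R_\omega\circ(\phi\times\phi)$, by the same computation; this is precisely the assertion that $\phi$ is an isomorphism $(X, \, R)\cong(A\times B, \, R_\omega)$ in $\mathcal{FSE}$.

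I expect no genuine obstacle beyond the $h=\mathrm{id}_X$ reduction and the mild notational hazard of two unrelated maps called $\pi_1$ (the first-factor projection of the hypothesis versus the structure projection $\im h\to A$), which I keep apart by using $p$ for the former. As a cross-check one could instead argue through \thref{refqmintro}: there $p\circ R$ is the multiplication $(x, \, y)\mapsto x\cdot y$ of the associated Kimura semigroup, so surjectivity says the semigroup coincides with its set of products, forcing it to be a rectangular band $A\times B$; the quasi-endomorphism identity $y\cdot\vartheta(x\cdot y)=y\cdot\vartheta(y)$ then makes the $B$-component of $\vartheta$ independent of the $A$-coordinate, yielding the map $\omega$.
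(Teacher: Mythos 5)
Your argument is correct. It is, however, organized differently from the paper's own proof of \thref{th:main}: the paper does not invoke \thref{th:refqmbis} at all, but argues directly from \thref{refqm} and \prref{detFS} --- surjectivity of $\pi_1\circ R$ makes the Kimura semigroup $(X,\cdot)$ total, hence a rectangular band $A\times B$ by \thref{structura}, and the second component of $R$ is then determined by hand by solving the compatibility conditions \equref{25} and \equref{26} on $A\times B$ (first forcing the $A$-part of $x\ast y$ to be $a_2$ and killing the dependence on the last two arguments, then killing the dependence on $a_1$ to produce $\omega$). Your route front-loads all of that computation into \thref{th:refqmbis} and reduces the theorem to the single clean observation that the image of $\pi_1\circ R$ is exactly $\im~h\cong A\times B$, so surjectivity forces the idempotent $h$ to be the identity; your closing ``cross-check'' paragraph is essentially the paper's actual proof. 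What your version buys is brevity and a transparent identification of what the hypothesis does (it kills $h$); what the paper's version buys is a by-product used later, namely the explicit fact that \emph{every} solution supported on a rectangular band is of the form $R_\omega$, which is quoted in the proof of \coref{pi2Rsurj}. One small point to keep in mind: the non-emptiness of $A$ and $B$ comes for free from the standing convention that all sets in the paper are non-empty, so $\im~h\cong A\times B\neq\emptyset$; it is worth saying so explicitly since the statement asserts it.
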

We point out that, with the exception of $A$ being a singleton, all solutions of the form $R_{\omega}$ classified above are degenerate (see (\ref{degenerate_solutions}) of \reref{singel}).

We only indicate a few of the various consequences of \thref{th:mainintro}. To this end, for a positive integer $n$ let us denote by ${\rm fs}^1 \, (n)$ (resp. ${\rm fs}_{\rm b} \, (n)$) the number of isomorphism classes of all solutions $R$ on a finite set with $n$ elements such that $\pi_1 \circ R $ is surjective (resp. $R$ is bijective). \coref{cazfinit} allows us to express ${\rm fs}^1 \, (n)$ and ${\rm fs}_{\rm b} \, (n)$ in terms of the Davis numbers and the Euler partition numbers:
\begin{eqnarray*}
{\rm fs}^1 \, (n) = \sum_{m|n} \, d(m), \qquad {\rm fs}_{\rm b} \, (n) = \sum_{m|n} \, p(m). \eqlabel{formdn0}
\end{eqnarray*}

As for unitary solutions, per \coref{cor:unitary}, if $|X| = n$ then their number of isomorphism classes is
\begin{equation*}
  \tau(n) + \sum_{d|n}\left\lfloor \frac d2\right\rfloor.
\end{equation*}

\coref{finitord} classifies finite-order solutions, using two well known combinatorial numbers: the \emph{Landau number} $g(n)$ \cite{land}, defined as the maximum order of a permutation in $S_n$, and $\overline{T (n, k)}$, the number of all conjugation classes of permutations of order $k$ in the symmetric group $S_n$ for $1 < k \leq g(n)$. If $X$ is a finite set with $|X| = n$, then the number of isomorphism classes of solutions of finite order $\geq 2$ on $X$ is equal to
\begin{equation*}
  \sum_{d|n} \, \Bigl(\sum_{k=1}^{g(d)} \, \overline{T (d, k)} \Bigl).
\end{equation*}

\seref{sect3} deals with the amply familiar notion in the Yang-Baxter literature (see the recent paper \cite{APPJ23}) of \emph{indecomposable solution} as introduced
in \cite[Definition 2.5]{etingof}. Perhaps surprisingly, and by contrast to the Yang-Baxter case, the variety of indecomposable solutions of the Frobenius-Separability equation is rather narrow, as we will see in \coref{camputine} and \coref{clasinderighnede}. For instance, any surjective solution is decomposable and on a set with $n$ elements there is, up to isomorphism, only one right non-degenerate indecomposable solution: the one associated to a cycle of length $n$.

\thref{th:conngrph} is a bit of a departure from the theme that indecomposable solutions are rather simple in character: it links indecomposability with the connectedness of the associated graph $\Gamma_f$ of a given function $f: X \to X$ as introduced in \deref{def:grph}. More precisely, \thref{th:conngrph} describes all left-nondegenerate indecomposable solutions on a set $X$ as follows:

\begin{theoremn}\thlabel{th:conngrphintro}
  Let $R = R_f$ be a left-nondegenerate solution on a set $X$ associated to a self-function $f: X \to X$. The following statements are equivalent:
  \vspace{-.1cm}
  \begin{enumerate}[(a)]
  \item \label{item:g13} $R_f$ is indecomposable;

  \item\label{item:14} The functional digraph $\Gamma_f$ is connected;

  \item\label{item:15} For every $x$, $y\in X$ there are non-negative integers $m$ and $n$ with $f^m (x) = f^n (y)$;

  \item\label{item:g15} There is no non-trivial $f$-invariant partition of $X$, i.e. $X = Y\sqcup Z$ such that $f(Y) \sqsubseteq Y$ and
    $f(Z) \sqsubseteq Z$.
  \end{enumerate}
  A function $f: X \to X$ satisfying one of the above conditions will be called \emph{connected}.
\end{theoremn}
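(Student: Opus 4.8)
The plan is to treat the solution-theoretic equivalence separately from the purely graph-theoretic ones: I would prove (a) $\Leftrightarrow$ (d) directly from the definition of (in)decomposability recalled in \seref{sect3}, and then establish the chain (d) $\Leftrightarrow$ (b) $\Leftrightarrow$ (c) as elementary statements about the functional digraph $\Gamma_f$. Throughout I would use the explicit shape of the left-nondegenerate solution attached to $f$, namely $R_f(x,\,y) = \bigl(y,\, f(x)\bigr)$, which is the specialization of \thref{refqmintro} to the right-zero Kimura multiplication $x\cdot y = y$ with quasi-endomorphism $\vartheta = f$ (one checks $x\cdot y = y$ satisfies $xyz=xz$ and makes the quasi-endomorphism condition vacuous, so an arbitrary $f$ is allowed, consistent with the Davis-number count of left-nondegenerate solutions).

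For (a) $\Leftrightarrow$ (d), I would unwind what it means for $R_f$ to decompose. A subset $Y \subseteq X$ satisfies $R_f(Y\times Y)\subseteq Y\times Y$ precisely when $f(Y)\subseteq Y$, since $R_f(Y\times Y)=\{(y,f(x)) : x,y\in Y\}$ has its first coordinate automatically in $Y$ and its second coordinate equal to $f(x)$. Hence a splitting $X=Y\sqcup Z$ into two nonempty $R_f$-invariant blocks is exactly a nontrivial $f$-invariant partition as in (d); moreover the cross terms behave correctly on their own, for instance $R_f(Y\times Z)\subseteq Z\times Y$ follows automatically from $f(Y)\subseteq Y$, so no extra condition is imposed. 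Thus $R_f$ is decomposable if and only if (d) fails, which is (a) $\Leftrightarrow$ (d).

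For (d) $\Leftrightarrow$ (b), the key observation is that for \emph{any} directed edge $u\to f(u)$ of $\Gamma_f$, a forward-closed partition keeps both endpoints on the same side: if $X=Y\sqcup Z$ with $f(Y)\subseteq Y$ and $f(Z)\subseteq Z$, then $u\in Y$ forces $f(u)\in Y$ and $u\in Z$ forces $f(u)\in Z$. Consequently every undirected path in $\Gamma_f$ stays within a single block, so each weakly connected component is entirely contained in $Y$ or in $Z$. This shows that a nontrivial $f$-invariant partition exists precisely when $\Gamma_f$ has at least two weakly connected components; conversely, grouping components produces such a partition, since each component is forward-closed. Hence (d) fails if and only if $\Gamma_f$ is disconnected, i.e. (b) $\Leftrightarrow$ (d).

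For (b) $\Leftrightarrow$ (c), one direction is immediate: if $f^m (x)=f^n (y)$ then the forward paths from $x$ and from $y$ meet at a common vertex, joining $x$ and $y$ by an undirected path, so (c) $\Rightarrow$ (b). The substantive direction is (b) $\Rightarrow$ (c), and this is where I expect the main obstacle: in the possibly infinite setting a connected component of $\Gamma_f$ need not contain a directed cycle into which all orbits flow, so the finite ``everything falls into the cycle'' intuition is unavailable. I would instead argue abstractly, setting $x\sim y$ iff $f^m (x)=f^n (y)$ for some $m,n\ge 0$ and verifying that $\sim$ is an equivalence relation: reflexivity and symmetry are clear, and transitivity follows by aligning the meeting points with suitable powers of $f$, for from $f^m (x)=f^n (y)$ and $f^p (y)=f^q (z)$ one obtains $f^{m+p}(x)=f^{n+p}(y)=f^{q+n}(z)$, whence $x\sim z$. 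Since any edge $u\to f(u)$ gives $u\sim f(u)$ and $\sim$ is transitive, connectedness of $\Gamma_f$ forces a single $\sim$-class, which is exactly (c). Establishing transitivity of $\sim$ is the one genuinely non-formal step, and it is precisely what replaces the finite cycle argument in the general case.
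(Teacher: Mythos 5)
Your proof is correct and follows essentially the same route as the paper's: the paper likewise unwinds decomposability of $R_f$ into $f$-invariant partitions via the formula $R_f(x,y)=(y,f(x))$, identifies connected components as the minimal $f$-invariant sets with $f$-invariant complements, and observes that the relation in (c) is an equivalence relation whose classes are exactly the weakly connected components. Your version merely chains the equivalences in a different order and spells out the transitivity computation that the paper leaves implicit.
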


As a consequence, \coref{clasinderighnede} shows that the number of isomorphism classes of indecomposable left-nondegenerate solutions on a finite set $X$ with $|X| = n$ is equal to the {\it Harary number} $\mathfrak{c} (n)$ introduced in \deref{hbnumber} as the number of all conjugation classes of connected self-maps $f: \{1, \cdots, n\} \to \{1, \cdots, n\}$. Equivalently, $\mathfrak{c} (n)$ is equal to the number of isomorphism classes of connected digraphs of the form $\Gamma_f$, for some $f: \{1, \cdots, n\} \to \{1, \cdots, n\}$. The number $\mathfrak{c} (n)$ appeared for the first time (albeit in disguise) in the work of Harary \cite[\S 4]{harary} and was later rediscovered as the number $p_n$ by de Bruijn \cite[p.18]{bruijn}.

\seref{sect4} focuses on the structure and classification of the symmetry groups $\Aut(X,R)$ for $\fse$ solutions $R$ on $X$. While the entire class of such groups is still somewhat elusive, we obtain partial results in a number of directions.

On the one hand, the automorphism groups of solutions of the form $R_f$ for self-functions $f:X\to X$ can be described completely. This is very much in the spirit of much other work describing, say, the automorphism groups of trees \cite[\S IV]{prins_phd} or planar graphs \cite{babai_planar-2} (typically in the context of finite structures); while the results seem to be as complete as one might hope for, they are somewhat awkward to state fully and concisely, and take up \prref{pr:autgpdec} and Theorems \ref{th:th:autogp-types}, \ref{th:th:arbprod}, \ref{th:th:allwreathcyclic}, \ref{th:th:alleveryonefix} and \ref{th:th:itwr}. A very rough summary would run approximately as follows (the symbol `$\wr$' denoting {\it wreath products} \cite[Chapter 7, preceding Theorem7.24]{rot_gp} by permutation groups):

\begin{theoremn}
  \begin{enumerate}[(1)]
  \item The groups realizable as $\Aut(X,R_f)\cong \Aut(f)$ for some endo-function $f:X\to X$ are precisely those of the form
    \begin{equation*}
      \prod_i \left(G_i\wr \Sigma_{C_i}\right),
    \end{equation*}
    where $G_i$ are automorphism groups of connected functions $f_i:X_i\to X_i$ and $\Sigma_i$ are full symmetric groups on arbitrary respective sets $C_i$.
  \item The groups realizable as $\Aut(f)$ for a {\it connected} endo-function $f:X\to X$ are precisely those falling into one of the following two classes:
    \begin{enumerate}[(a)]

    \item of the form
      \begin{equation*}
        \left(\prod_{i=1}^t G_i\right)\wr \left(\textrm{a cyclic group acting on itself by translation}\right);
      \end{equation*}

    \item countable unions of countable products of groups $G_i$, with connecting embeddings of a very special character, describable explicitly.

    \end{enumerate}
    In both of these, the $G_i$ are arbitrary automorphism groups of connected functions with (necessarily unique) fixed points.

  \item The groups realizable as $\Aut(f)$ for $f$ connected with a fixed point are the iterated (possibly infinite) wreath products of products of full symmetric groups.
  \end{enumerate}
\end{theoremn}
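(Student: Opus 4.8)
The statement is the aggregate of \prref{pr:autgpdec} and Theorems~\ref{th:th:autogp-types}, \ref{th:th:arbprod}, \ref{th:th:allwreathcyclic}, \ref{th:th:alleveryonefix} and \ref{th:th:itwr}, so the plan is to explain how those pieces fit together, organised by progressively unwinding the structure of the functional digraph $\Gamma_f$. For part (1), I would begin with the remark that any automorphism of $(X,f)$ permutes the connected components of $\Gamma_f$ and can only send a component to an isomorphic one. Writing $X=\bigsqcup_i\bigsqcup_{c\in C_i}X_{i,c}$, where $i$ ranges over the isomorphism types of connected components, $C_i$ indexes the components of type $i$, and $f_i:X_i\to X_i$ is a chosen representative, an automorphism amounts to a permutation of each $C_i$ together with, for every $c$, an isomorphism from the source component onto its image. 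This identifies $\Aut(f)$ with $\prod_i\bigl(G_i\wr\Sigma_{C_i}\bigr)$ for $G_i=\Aut(f_i)$, the wreath product being the unrestricted one so as to permit automorphisms moving infinitely many components. Conversely any such product is realised by the disjoint union of $|C_i|$ copies of each $f_i$, so the reduction to \emph{connected} $f$ is both necessary and sufficient.

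For part (2) assume $f$ connected and split, via \thref{th:conngrph}, according to whether $\Gamma_f$ contains a cycle (necessarily unique and finite). If the cycle has length $t$, every automorphism preserves it and hence induces a rotation of it; the rotations actually attained form a finite cyclic group $\ZZ/d$ with $d\mid t$, and compatibility forces the rooted trees hanging off the cycle vertices to be periodic of period $t/d$. Folding along one period then exhibits $\Aut(f)$ as $\bigl(\prod_i G_i\bigr)\wr\ZZ/d$, the product running over a single period of the family of hanging trees and $\ZZ/d$ acting on itself by translation; each $G_i$ is the automorphism group of a rooted tree, i.e. of a connected function with a unique fixed point. This is case (a). If instead $\Gamma_f$ has no cycle---possible only when $X$ is infinite---there is no finite substructure to anchor a rotation, and $\Aut(f)$ appears instead as a countable union of countable products of the tree groups $G_i$ strung along the spine of $\Gamma_f$, the connecting maps recording how an automorphism fixing a distant vertex restricts towards the root. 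Making these ``connecting embeddings of a very special character'' explicit is case (b), and this is the main obstacle: with no rotation group to bound it, $\Aut(f)$ is a genuine (co)limit rather than a finite extension, and the whole of Theorems~\ref{th:th:arbprod} and \ref{th:th:alleveryonefix} is devoted to pinning down the transition maps.

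For part (3), connectedness together with a fixed point means $\Gamma_f$ is a rooted tree, the fixed point being the root. Here I would recurse on the tree: at the root, group the maximal proper subtrees by isomorphism type and apply the component argument of part (1) one level at a time, writing the automorphism group of the tree as a product of wreath products $H\wr\Sigma$ in which $H$ is the automorphism group of one of the child subtrees and $\Sigma$ is a full symmetric group on the set of children isomorphic to it; iterating downwards, with base case the (products of) full symmetric groups acting on the fibres of leaves, yields precisely the iterated, possibly infinite, wreath products of products of full symmetric groups. The finitary items---parts (1), (2)(a) and (3)---are thus formal consequences of the component, cycle and rooted-tree decompositions, whereas the cycle-free infinite case (2)(b) carries the genuine difficulty.
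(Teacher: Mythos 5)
Your overall architecture (components, then cycle/no-cycle, then rooted-tree recursion) matches the paper's, but there is a genuine error in your treatment of part (2). You split the connected case into ``$\Gamma_f$ has a cycle'' (yielding class (a)) versus ``$\Gamma_f$ has no cycle'' (yielding class (b)). That dichotomy does not prove the statement: the paper's \thref{th:autogp-types} is a \emph{trichotomy} governed, in the acyclic case, by the amplitude homomorphism $\amp:\Aut(f)\to(\ZZ,+)$ measuring how far an automorphism translates along the unique bi-infinite path. When $\Gamma_f$ has no cycle but $\amp$ is nontrivial, $\Aut(f)\cong\left(\prod_{i=1}^t G_i\right)\wr\ZZ$ with $\ZZ$ acting on itself by translation --- this is class (a) of the statement (``cyclic group'' there includes the infinite cyclic group), not class (b). The countable-union-of-countable-products description (b) holds only when every automorphism fixes some vertex, i.e.\ $\amp\equiv 0$; it is simply false for a bi-infinite path with a translation symmetry. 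Your sentence ``there is no finite substructure to anchor a rotation, and $\Aut(f)$ appears instead as a countable union\dots'' therefore both misclassifies the translation case and gives no argument for the union description in the case where it does apply (the paper's \leref{le:iswrmerge} and \thref{th:alleveryonefix}, which identify the connecting maps as wreath mergers; \thref{th:arbprod}, which you cite here, is about part (1), not (2)(b)).

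A second, smaller but still real gap concerns the converse (realizability) directions in parts (1) and (3). In part (1), given abstract groups $G_i$ realized as $\Aut(f_i)$ for connected $f_i$, your construction is ``take the disjoint union of $|C_i|$ copies of each $f_i$''. If two representatives $f_i\cong f_j$ happen to be isomorphic (which can occur whenever $G_i\cong G_j$), the disjoint union has automorphism group $G\wr\Sigma_{C_i\sqcup C_j}$ rather than $(G\wr\Sigma_{C_i})\times(G\wr\Sigma_{C_j})$, so the construction fails. The paper repairs this with \prref{pr:manysameauto}: using branching patterns and $A$-explosions one produces arbitrarily large families of mutually non-isomorphic connected functions all sharing a prescribed automorphism group. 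The same device is needed in the converse direction of part (3) (and throughout \thref{th:allwreathcyclic} and \thref{th:alleveryonefix}) to force the intended pattern of (non-)isomorphism among attached subtrees; your proposal never addresses how to arrange these non-isomorphisms.
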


The reader familiar with such classification work will no doubt recognize the somewhat involved nature of the statements: see for instance \cite[Main Theorem 8.3]{babai_planar-2}.

For an easily-stated consequence, \coref{cor:abautf} classifies the abelian groups $\Aut(f)$ (and in fact the abelianizations $\Aut(f)_{ab}$) as precisely the products of (finite or infinite) cyclic groups. The restrictions such groups are subject to are thus significant. Contrast this with \thref{th:allgpsquotkimura}, which shows that the class of automorphism groups of Kimura semigroups (and hence, by extension, that of groups $\Aut(X,R)$ for $\fse$ solutions $R$) is rather broad:

\begin{theoremn}
  An arbitrary group can be realized as a split quotient of $\Aut(X,\cdot)$ for some Kimura semigroup $(X,\cdot)$, with kernel isomorphic to a product of full permutation groups.
\end{theoremn}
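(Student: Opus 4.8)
The plan is to realize the prescribed group $G$ as the ``core symmetry group'' of an explicit Kimura semigroup, with the required product of full symmetric groups arising from freely permutable non-idempotent fibers. First I would recall, from Kimura's structure theorem \cite[Theorem 2]{Kim2} (equivalently, from the picture underlying \thref{th:refqmbisintr}), that a Kimura semigroup $(X,\cdot)$ amounts to a set $X$ with an idempotent map $h\colon X\to X$ and an identification $\im h\cong A\times B$, the operation being $x\cdot y=(\pi_1 h x,\ \pi_2 h y)$. The idempotents are exactly the elements of the core $E:=\im h\cong A\times B$, and each $x$ has a coordinate pair $(a(x),b(x)):=(\pi_1 h x,\pi_2 h x)$. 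Writing $X=\bigsqcup_{(l,r)\in A\times B}F_{l,r}$ for the fibers of $x\mapsto(a(x),b(x))$, each $F_{l,r}$ consists of a single idempotent together with a set $X_{l,r}$ of non-idempotents, and $x\cdot y$ depends only on these coordinates.

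A direct check then shows that $\sigma\in\Sigma_X$ is an automorphism of $(X,\cdot)$ if and only if $\sigma(E)=E$ and there is a pair $(\alpha,\beta)\in\Sigma_A\times\Sigma_B$ with $a\circ\sigma=\alpha\circ a$ and $b\circ\sigma=\beta\circ b$; such a $\sigma$ restricts to $(\alpha,\beta)$ on $E$ and carries $F_{l,r}$ bijectively onto $F_{\alpha l,\beta r}$. Hence $\Aut(X,\cdot)$ sits in a short exact sequence
\[
1\longrightarrow K\longrightarrow\Aut(X,\cdot)\xrightarrow{\ q\ }H\longrightarrow 1,
\]
where $K=\prod_{(l,r)}\Sigma_{X_{l,r}}$ is the group of permutations fixing $E$ pointwise and each $X_{l,r}$ setwise (manifestly a product of full symmetric groups), and $H=\{(\alpha,\beta)\in\Sigma_A\times\Sigma_B:\ |X_{\alpha l,\beta r}|=|X_{l,r}|\ \text{for all}\ l,r\}$ is the group of fiber-size-preserving pairs, onto which $q$ is surjective (any such pair extends by choosing bijections between equinumerous fibers). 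This reduces the theorem to producing, for a given $G$, data $(A,B,\{X_{l,r}\})$ with $H\cong G$ together with a splitting of $q$.

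For the construction I would take $A=B=G$ and prescribe the fiber cardinalities by a \emph{difference coloring}: fix an injection $v$ of $G$ into the cardinals and set $|X_{l,r}|=v(l^{-1}r)$. (Since the cardinals form a proper class, such a $v$ exists for every $G$; for countable $G$ one may use distinct finite values.) The diagonal left translations $(\lambda_g,\lambda_g)$, $\lambda_g(x)=gx$, preserve these cardinalities, so they lie in $H$. The crux is that there are no others: if $(\alpha,\beta)\in H$ then $v(\alpha(l)^{-1}\beta(r))=v(l^{-1}r)$ for all $l,r$, so injectivity of $v$ forces $\alpha(l)^{-1}\beta(r)=l^{-1}r$; taking $r=l$ gives $\alpha=\beta$, and then $l=1$ gives $\alpha=\lambda_{\alpha(1)}$. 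Thus $H=\{(\lambda_g,\lambda_g):g\in G\}\cong G$. This rigidity is precisely that of the left-regular representation (the Cayley color digraph), and pinning down \emph{exactly} $G$ rather than some overgroup is the main obstacle; the injective coloring is what resolves it, while the availability of arbitrarily large distinct cardinals removes any set-theoretic difficulty for infinite $G$.

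Finally I would exhibit the splitting. Choosing for each color $c\in G$ a fixed set $T_c$ with $|T_c|=v(c)$ and realizing $X_{l,r}=\{(l,r)\}\times T_{l^{-1}r}$, define $\widetilde{\lambda}_g\in\Sigma_X$ by $(l,r)\mapsto(gl,gr)$ on $E$ and $(l,r,t)\mapsto(gl,gr,t)$ on the fibers; since $T_{(gl)^{-1}(gr)}=T_{l^{-1}r}$ this is well defined, is an automorphism with $q(\widetilde\lambda_g)=(\lambda_g,\lambda_g)$, and $g\mapsto\widetilde\lambda_g$ is a homomorphism. Hence $q$ splits and
\[
\Aut(X,\cdot)\;\cong\;\Bigl(\textstyle\prod_{(l,r)\in G\times G}\Sigma_{X_{l,r}}\Bigr)\rtimes G,
\]
realizing $G$ as a split quotient with kernel a product of full symmetric groups, as claimed. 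As a coda, the same example upgrades to an $\fse$ solution: by \thref{refqmintro} the identity is a quasi-endomorphism of $(X,\cdot)$, and one checks $\Aut(X,R_{(\cdot,\,\id)})=\Aut(X,\cdot)$, so $G$ is likewise a split quotient of the symmetry group of a genuine Frobenius--Separability solution.
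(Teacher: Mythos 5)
Your proposal is correct, and its first half --- the structural description of $\Aut(X,\cdot)$ as an extension of the group $H$ of fiber-cardinality-preserving pairs $(\alpha,\beta)\in\Sigma_A\times\Sigma_B$ by the product $\prod_{(l,r)}\Sigma_{X_{l,r}}$ of fiber permutation groups, with surjectivity and splitting obtained by choosing bijections between equinumerous fibers --- is exactly the paper's argument in \thref{th:allgpsquotkimura}. The two proofs diverge only at the final combinatorial step. The paper abstracts the fiber cardinalities into an edge-colored complete bipartite graph on $A\times B$ and then invokes \leref{le:coloredbipart}, realizing $G$ as the color-preserving automorphism group of a bipartite repackaging of the Cayley color graph $\Gamma(G,S)$ for a monoid generating set $S$, with two colors $s_0,s_1$ per generator. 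You instead take $A=B=G$ and color $(l,r)$ by an injective function $v$ of the ``difference'' $l^{-1}r$, so that the rigidity of the left-regular representation immediately pins $H$ down to $\{(\lambda_g,\lambda_g)\}\cong G$; this is the same Cayley-graph rigidity idea, but executed directly on $G\times G$ with the whole group as ``generating set.'' Your version buys a shorter, self-contained argument (no separate lemma on colored bipartite graphs), an explicit splitting, and a transparent treatment of infinite $G$ via a proper-class supply of distinct cardinals; its only cost is that the fiber sizes must be genuinely injective on $G$ (hence potentially enormous), whereas the paper's two-colors-per-generator scheme only needs as many distinct colors as twice a generating set. Your closing remark, that adjoining the identity quasi-endomorphism upgrades the example to an $\fse$ solution with the same automorphism group, is likewise exactly how the paper uses the theorem in \coref{cor:notallrf}.
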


This is sufficient (\coref{cor:notallrf}) to give examples of groups of the form
\begin{equation*}
  \Aut(X,\cdot),\quad (X,\cdot)\textrm{ a Kimura semigroup}
\end{equation*}
(hence also of the form $\Aut(X,R)$ by \thref{refqmintro}) that are {\it not} realizable as $\Aut(Y,R_f)$ for any endo-function $f:Y\to Y$.

\section{Preliminaries}\selabel{prel}
All sets in this paper will be non-empty, ${\rm Id}_X$ is the identity map on a set $X$ and $\{\star\}$ denotes the singleton set.  The group of permutations on a set $X$ will be denoted by $\Sigma_X$ or by $S_n$ if $X$ is a finite set with $n$ elements.

\begin{definition}\delabel{def:grupuriciud}
  \begin{enumerate}[(1)]
  \item The {\it centralizer} or {\it automorphism group} of a tuple $(f_i)_i$ of functions $f_i:X\to X$, $i\in I$ on a set $X$ is
    \begin{equation*}
      \Aut(f_i,\ i\in I):=\{\sigma \in \Sigma_X \, | \, \sigma \circ f_i = f_i \circ \sigma,\ \forall i\in I\}\leq \Sigma_X .
    \end{equation*}
    In the special case of one function $f: X\to X$ we simply write {\it function centralizer} and denote by $\Aut(f)$.
  \item For a cardinal number $\alpha$, an abstract group is an {\it $\alpha$-function centralizer} if it is isomorphic to the centralizer of a family $(f_i)_{i\in \alpha}$ of $\alpha$ endomorphisms on some set $X$.
  \end{enumerate}
\end{definition}

In the course of working with self-functions $f:X\to X$ it will be convenient to have available some graph-theoretic tools and terminology. We introduce the following objects both for this purpose, and for future reference.

\begin{definition}\delabel{def:grph}
  For a function $f:X\to X$, the {\it directed graph} $\Gamma_f$ {\it associated (or attached) to $f$} is the directed graph with
  \begin{itemize}
  \item $X$ as its vertex set;
  \item and an edge $x\to y$ precisely when $y = f (x)$ and $x\ne y$.
  \end{itemize}
  In other words, this is the usual graph $\{(x, \, f(x) )\ |\ x\in X\}$ minus the diagonal $\Delta_X:=\{(x, x)\ |\ x\in X\}$.
\end{definition}

\begin{remark}\relabel{re:fngrphs}
  The graphs of the form $\Gamma_f$ are (almost) the {\it functional digraphs} of \cite[\S 2]{harary}: that paper's digraphs disallow length-1 loops $x\to x$ by definition, and are termed functional when every vertex has out-degree 1. This means that Harary's functional digraphs are those $\Gamma_f$ for $f:X\to X$ having no fixed points. We will apply the term 'functional (di)graph' more broadly, to all $\Gamma_f$ (for $f$ possibly having fixed points).

  The directed graphs $\Gamma_f$ and $\Gamma_g$ associated to two functions $f$, $g: X \to X$ will be called \emph{isomorphic}, and we write $\Gamma_f \cong \Gamma_g$, if they are isomorphic in the usual sense \cite[pag. 3]{Die00}: there exists a permutation $\sigma : X \to X$ such that $x\to y$ is an edge in $\Gamma_f$ if and only if $\sigma(x)\to \sigma(y)$ is an edge in $\Gamma_g$. We can easily prove that $\Gamma_f \cong \Gamma_g$ if and only if $f$ and $g$ are conjugate maps, i.e. there exists $\sigma \in \Sigma_X$ such that $g = \sigma \circ f \circ \sigma^{-1}$. In particular, we obtain that the automorphism group $\Aut (\Gamma_f ) \cong \Aut(f)$, the function centralizer.
\end{remark}

The group $\Sigma_X$ acts by conjugation on itself: the cardinal number of all conjugation classes of permutations on $X$ is denoted by $p(X)$: if $|X| = n$, then $p(X) = p(n)$, the number of all possible partitions of the positive integer $n$ as introduced and studied for the first time by Euler \cite{euler}. Since then, a vast literature devoted to the study of $p(n)$ was developed in number theory and combinatorics; for more details, including generating functions and asymptotic formulas for computing $p(n)$, we refer to \cite{andrews}. On the other hand, the same group $\Sigma_X$ acts by conjugation on the set $X^X$ of all functions $f: X \to X$, i.e. $\sigma \rhd f := \sigma \circ f \circ \sigma^{-1}$, for all $\sigma \in \Sigma_X$ and $f:X \to X$.  The cardinal number of all conjugation classes of maps $X\to X$ is denoted by $d(X)$ and it will be called the \emph{Davis number} of $X$ (in combinatorics it is also known as \emph{mapping patterns} or \emph{mapping types} of $X$); if $X$ is a finite set with $|X| = n$, then $d(X)$ will be denoted by $d(n)$.

\begin{remark}\relabel{davids}
  Using \reref{re:fngrphs} we obtain that the number $d(n)$ (sequence \cite{slo-A001372}) is also that of isomorphism classes of functional graphs $\Gamma_f$ on $n$ vertices. It was introduced by Davis in \cite[Theorem 6]{davis} where the first formula for its computation was given. Independently, de Bruijn \cite[page 18]{bruijn} in his study of mapping types of a set gave a more explicit formula for computing $d(n)$, namely:
  \begin{equation} \eqlabel{bruijn}
    d(n) = \sum_{(m)} \, \prod_{i=1}^{\infty} \, \Bigl(\sum_{j|i}  \, j \, m_j \Bigl)^{m_i} \, \frac{1}{i^{m_i} m_i!}
  \end{equation}
  where the first summation is over all sequences $(m) = (m_1, \, m_2, \, \cdots )$ such that $m_1 + 2\, m_2 + 3\, m_3 + \cdots = n$. Based on this formula, the first fifteen values were computed: $d (1) = 1, \cdots, d(15) = 1328993$. Moreover, a generating function and connections with rooted trees have been highlighted; for more detail see \cite{harary, read}. We just mention that, if $T_k$ is the number of rooted trees with $k$ points (sequence \cite{slo-A000081}) and
  \begin{equation*}
    T(x) = \sum_{k\geq 1} \, T_k \, x^k
  \end{equation*}
  its generating function (also denoted by $T(x)$ on \cite[p.19]{bruijn}), then the generating function for $d(n)$ was given by Read \cite{read}:
  \begin{equation} \eqlabel{davids2}
    \sum_{n\geq 1} \, d(n) x^n = \prod_{n=1}^{\infty} \, \frac{1}{1-T(x^n)}.
  \end{equation}
\end{remark}

A semigroup is a set $S$ with an associative binary operation $m_S: S\times S \to S$, $m_S (x, y) = xy$, for all $x$, $y\in S$ called multiplication. For the basics of semigroup theory we refer to \cite{Grillet, Howie}. We only recall briefly the concepts that will be used throughout the paper. A semigroup $S$ is called \emph{total} if $m_S$ is surjective, that is if $S = S^2 := \{xy \, | \, x, y \in S \}$.
For any set $S$, by defining the multiplication $x y : = x$ (resp. $x y : = y$), for all $x$, $y\in S$ we obtain two semigroups called the \emph{left zero} (resp. \emph{right zero}) \emph{semigroup} on $S$. The set of idempotents of a semigroup $S$ is denoted by $E(S) = \{ e \in S \, | \, e = e^2\}$ and $S$ is called a \emph{band} (or an \emph{idempotent semigroup}) if $E(S) = S$. A semigroup $S$ is called \emph{rectangular} if
\begin{equation}\label{rectang}
xyx = x
\end{equation}
for all $x$, $y\in S$. Any rectangular semigroup $S$ is a band \cite{Kim2} and the basic example of a rectangular semigroup is the so-called
\emph{rectangular band} of two sets $A$ and $B$, i.e. $S := A\times B$ with the multiplication given for any $a_1$, $a_2 \in A$ and $b_1$, $b_2 \in B$ by:
\begin{equation} \eqlabel{recband}
(a_1, \, b_1) \cdot (a_2, \, b_2) := (a_1, \, b_2).
\end{equation}

The following class of semigroups, introduced by Kimura \cite{Kim2} as a proper generalization of rectangular semigroups, will play a key role in this paper:

\begin{definition}\delabel{kimura}
A \emph{Kimura semigroup} is a semigroup $S$ such that for any $x$, $y$, $z\in S$:
\begin{equation}\eqlabel{kim}
xyz = xz.
\end{equation}
\end{definition}

Kimura semigroups have been generalized to \emph{exclusive semigroups} which are semigroups $S$ satisfying the (ambiguous) compatibility condition: $x y z \in \{xy, \, yz, \, xz \}$, for all $x$, $y$, $z\in Z$. Structural results for exclusive semigroups were obtained by Yamada \cite{yamada}.

\begin{examples}\exlabel{exkimura}
  \;
  \begin{enumerate}[(1)]
  \item It can be easily seen that if a Kimura semigroup $S$ has a unit, then $S$ is the singleton $S = \{\star\}$. If a Kimura semigroup $S$ has a zero element $0$ (i.e. an element $0\in S$ such that $x 0 = 0 x = 0$, for all $x\in S$), then $S$ is the trivial semigroup, i.e. $x y = 0$, for all $x$, $y\in S$.
  
      Left/right zero semigroups are Kimura semigroups. If a Kimura semigroup $S$ is a left (resp. right) cancellative semigroup, then $S$ is a right (resp. left) zero semigroup.

  \item Any rectangular semigroup is a Kimura semigroup \cite{Kim2}. In particular, the rectangular band of two sets $A$ and $B$ defined in \equref{recband} is a Kimura semigroup.

  \item\label{item:5} Let $S$ be a set and $f = f^2 : S \to S$ an idempotent map, i.e. $f(f(x)) = f(x)$, for all $x\in S$. We denote ${}_fS : = S$, as a set, with the multiplication $x \cdot y := f(x)$, for all $x$, $y\in S$. Then ${}_fS$ is a Kimura semigroup. In a similar fashion, $S_f := S$ with multiplication $x \cdot y := f(y)$, for all $x$, $y\in S$, is a Kimura semigroup. Two semigroups ${}_fS$ and ${}_gS$ (resp. $S_f$ and $S_g$) are isomorphic if and only if $f$ and $g$ are conjugate idempotents, i.e. there exists $\sigma \colon S \to S$ a permutation on the set $S$ such that $g = \sigma \circ f \circ \sigma^{-1}$. Moreover, the automorphism group $${\rm Aut} ({}_fS ) = {\rm Aut} (S_f ) = \Aut(f) = \{ \sigma \in \Sigma_S \, | \, \sigma \circ f = f \circ \sigma \}$$ the function centralizer of $f$. We can also prove that ${}_fS$ and $S_g$ are isomorphic if and only if $f$ and $g$ are constant maps, i.e. there exist $c_1$, $c_2 \in S$ such that $f(x) = c_1$ and $g(x) = c_2$, for all $x\in S$. If $f\neq {\rm Id}_X$, then both ${}_fS$ and $S_f$ are examples of Kimura semigroups which are not rectangular.

  \item The direct product of two Kimura semigroups is a Kimura semigroup. In particular, for two idempotent maps $f= f^2$ and $g=g^2$ on a set $S$, we obtain that ${}_fS \times S_g$ is a Kimura semigroup which is not isomorphic to a Kimura semigroup of the form ${}_h T$ or $T_h$, for some set $T$ and idempotent map $h$.

  \item Let $S$ and $T$ be two Kimura semigroups and $\varphi : T \to {\rm End}\, (S)$ a morphism of semigroups, where ${\rm End}\, (S)$ denotes the monoid of all endomorphisms of $S$.  We denote $\varphi (t) (s) = t \vartriangleright s$ and consider $S \rtimes_{\varphi} \, T$ to be the semidirect product of semigroups,
      i.e. $S \rtimes_{\varphi} \, T$ is the semigroup with underlying set $S\times T$ and multiplication given for any $s$, $s'\in S$ and $t$, $t'\in T$ by:
      \begin{equation*}(s, \, t) \cdot (s', \, t') := \bigl( s (t \vartriangleright s'), \, tt'\bigl).\end{equation*}
      It can be easily seen that $S \rtimes_{\varphi} \, T$ is a Kimura semigroup if and only if $s \bigl( (tt') \vartriangleright s' \bigl) = s (t\vartriangleright s')$,
      for all $s$, $s'\in S$ and $t$, $t'\in T$.

      \item\label{item:afine} Let $S$ be a cancellative semigroup and $A$, $B \in {\rm End}(S)$ two endomorphisms of $S$. We define a new multiplication on $S$ given by $x \cdot y := A(x) B(y)$, for all $x$, $y\in S$. Then, it can be easily seen that $(S, \cdot)$ is a Kimura semigroup if and only if there exist two elements $s_1$, $s_2 \in S$ such that for any $x \in S$:
       \begin{eqnarray*}
A (B (x))  &=& s_1, \qquad A(x) = A^2 (x) \, s_1 \\
B (A (x))  &=& s_2, \qquad B(x) = s_2 \,  B^2 (x)
\end{eqnarray*}

  \item\label{item:finitary} Note that the category ${\rm KimSemigr}$ of all Kimura semigroups is a {\it finitary variety of algebras} (\cite[p.124]{mcl} or \cite[Definition 16.16]{ahs}), i.e. a category of sets equipped with a number of {\it finitary} operations (each having a finite number of arguments), satisfying certain equations. It follows, in particular \cite[pp.124-125]{mcl}, that the forgetful functor
    \begin{equation*}
      {\rm KimSemigr} \xrightarrow[]{\mathrm{forget}} {\rm Set}
    \end{equation*}
    to the category of sets has a left adjoint $\mathcal{FK} (-) : {\rm Set} \to {\rm KimSemigr}$, where for any set $X$, $\mathcal{FK} (X)$ is the free Kimura semigroup generated by $X$.

    The structure of $\mathcal{FK}(X)$ is very simple: as a set it consists exactly of the 1- or 2-letter words on the alphabet $X$, with multiplication defined by
    \begin{equation*}
      (x\cdots)\cdot (\cdots x') = xx';
    \end{equation*}
    or: concatenate the two words and remove all but the first and last letter.

  \end{enumerate}
\end{examples}

The connection between rectangular and Kimura semigroups was established by the following structure theorem proved for the most part in \cite[Section 2]{Kim2} (see also \cite[Theorem 1.1.3]{Howie}). Recall also that an {\it ideal} \cite[p.4]{Howie} in a semigroup is a subset closed under both left and right multiplication by arbitrary elements, and that a semigroup is
\begin{itemize}
\item {\it simple} \cite[\S 3.1]{Howie} if it has no proper ideals;
\item and {\it completely simple} \cite[\S 3.3]{Howie} if it is simple and has a {\it primitive idempotent} \cite[\S 3.2]{Howie}, i.e. an idempotent element $e$ such that
  \begin{equation*}
    f\textrm{ idempotent and }fe=ef=f\Longrightarrow f=e.
  \end{equation*}
\end{itemize}

\begin{theorem}\thlabel{structura}
  Let $S$ be a semigroup. The following statements are equivalent:
  \begin{enumerate}[(1)]
  \item $S$ is a rectangular semigroup;
  \item $S$ is a band and a Kimura semigroup;
  \item $S$ is both a total and a Kimura semigroup;
  \item There exist two non-empty sets $A$ and $B$ and an isomorphism of semigroups $S \cong A \times B$, where $A\times B$ is the rectangular band of the sets $A$ and $B$ as defined by \equref{recband};
  \item There exist a left zero semigroup $A$, a right zero semigroup $B$ and an isomorphism of semigroups $S \cong A \times B$. Moreover, this factorization is unique up to an isomorphism of semigroups.
  \item\label{Kim6} $S$ is Kimura and simple;
  \item\label{Kim7} $S$ is Kimura and completely simple.
  \end{enumerate}
\end{theorem}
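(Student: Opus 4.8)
The plan is to deduce everything from a single collapsing identity for Kimura semigroups and then to arrange the seven conditions into a few short implication cycles rather than one long chain. The identity I would record first is that in any semigroup satisfying \equref{kim}, every product of three or more factors equals the product of its first and last factors:
\[
x_1 x_2 \cdots x_n = x_1 x_n \qquad (n \geq 3).
\]
This is proved by an immediate induction on $n$, the base case $n = 3$ being \equref{kim} itself. Practically every verification below becomes a one-line application of this rule, so I would state and prove it as a preliminary lemma.

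For the equivalence of \textbf{(1)}, \textbf{(2)} and \textbf{(3)} I would argue as follows. If $S$ is a band satisfying \equref{kim}, then $xyx = x\cdot x = x$, so $S$ is rectangular; this is \textbf{(2)}$\Rightarrow$\textbf{(1)}. Conversely, a rectangular $S$ already satisfies \equref{kim}: using $z = zxz$ and $x(yz)x = x$ one gets $xyz = xy(zxz) = (xyzx)z = xz$; and it is a band, since $x = x\,x^2\,x = x^4 = x^2$ by the collapsing rule (note that Kimura must be derived first, so that the rule is available). This gives \textbf{(1)}$\Rightarrow$\textbf{(2)}. Finally \textbf{(2)}$\Leftrightarrow$\textbf{(3)} is immediate: a band is total because $x = x^2 \in S^2$, while a total Kimura semigroup is a band because writing $x = uv$ and collapsing gives $x^2 = uvuv = uv = x$.

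The structural statements \textbf{(4)} and \textbf{(5)} are where an actual construction is needed, but the collapsing rule makes it painless. Fixing any base point $0 \in S$, I would set $A := S0$ and $B := 0S$. The collapsing rule shows at once that $A$ is a left-zero semigroup, that $B$ is a right-zero semigroup, and that $\phi\colon x \mapsto (x0,\,0x)$ is a homomorphism into the rectangular band $A \times B$ of \equref{recband}; its two-sided inverse is $(a,b)\mapsto ab$, since $(x0)(0x) = x^2 = x$ recovers $x$ and $\phi(ab) = (a,b)$ recovers $(a,b)$, both by the same rule. This produces the isomorphism of \textbf{(5)} (equivalently \textbf{(4)}, the left-zero/right-zero product being literally \equref{recband}), and the converse \textbf{(4)}$\Rightarrow$\textbf{(1)} is the one-line check that \equref{recband} satisfies $xyx = x$. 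For the uniqueness clause in \textbf{(5)} I would recover the two factors intrinsically, realizing $A$ as the set of principal right ideals $xS$ (on which $S$ acts as a left-zero semigroup) and $B$ dually as the set of principal left ideals $Sx$; being isomorphism invariants, these force any two factorizations to have isomorphic factors.

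The conditions \textbf{(6)} and \textbf{(7)} look the most delicate but reduce cleanly to totality, so I expect the only real obstacle to be careful bookkeeping about when \equref{kim} (hence the collapsing rule) is in force. For \textbf{(6)}$\Rightarrow$\textbf{(3)} I would observe that $S^2$ is always a two-sided ideal, whence simplicity forces $S^2 = S$, i.e. $S$ is total; together with the Kimura hypothesis this is \textbf{(3)}. For \textbf{(1)}$\Rightarrow$\textbf{(7)} I would compute in $A \times B$ that $S(a,b)S = A \times B = S$, so every ideal is all of $S$ and $S$ is simple, and that every element is a primitive idempotent (if $e = (a,b)$ and $ef = fe = f$ then comparing coordinates gives $f = e$), so $S$ is completely simple; and \textbf{(7)}$\Rightarrow$\textbf{(6)} is trivial. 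This closes the loops \textbf{(1)}$\Leftrightarrow$\textbf{(2)}$\Leftrightarrow$\textbf{(3)}$\Leftrightarrow$\textbf{(4)}$\Leftrightarrow$\textbf{(5)} and \textbf{(1)}$\Rightarrow$\textbf{(7)}$\Rightarrow$\textbf{(6)}$\Rightarrow$\textbf{(3)}$\Rightarrow$\textbf{(1)}, establishing the full equivalence.
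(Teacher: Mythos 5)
Your proof is correct, and it is worth noting that the paper itself does not prove most of this theorem: it cites Kimura's paper and Howie's book for the equivalence of (1)--(5), records the ideal-theoretic decomposition $x \mapsto (xS,\, Sx)$ in a subsequent remark, and only argues the additions (6) and (7) -- for which it uses exactly your observation that $S^2$ is an ideal, so simplicity forces totality. What you do differently is give a fully self-contained argument organized around the collapsing identity $x_1x_2\cdots x_n = x_1x_n$, and you build the decomposition from a base point, $A:=S0$, $B:=0S$, $\phi(x)=(x0,\,0x)$, rather than from the principal ideals $xS$ and $Sx$ as in Kimura's Lemma 1 (which you correctly fall back on for the uniqueness clause, since the principal-ideal sets are isomorphism invariants while the base-point construction is not manifestly so). The two decompositions are equivalent, and your base-point version makes the homomorphism and inverse checks one-line applications of the collapsing rule; the trade-off is that you must invoke $x^2=x$ to see that $(x0)(0x)=x$, so the band property has to be in hand first, which you correctly arrange by proving (1) $\Rightarrow$ (2) before the construction. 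All the individual verifications (rectangular $\Rightarrow$ Kimura via $xyz = xy(zxz) = (xyzx)z = xz$; every element of a rectangular band being a primitive idempotent; $SxS = S$ giving simplicity) check out, and your implication graph does connect all seven conditions.
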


\begin{remarks}\relabel{singletonAB}
  \;
  \begin{enumerate}
    \item Statements (\ref{Kim6}) and (\ref{Kim7}) of \thref{structura} do not appear in the above cited papers \cite{Howie, Kim2}, but they are easily proven equivalent to the others. Indeed, if $S$ is a simple Kimura semigroup and $S^2$ is obviously an ideal of $S$ we arrive at $S^{2}= S$ which shows that $S$ is isomorphic to a rectangular band. Conversely, any rectangular band is a (completely) simple Kimura semigroup.

  \item For future use, we indicate the construction of the non-empty sets $A$ and $B$ of the above theorem as given in \cite[Lemma 1]{Kim2}. If $S$ is a rectangular semigroup, then $A$ (resp. $B$) is the set of all subsets of $S$ of the form $xS$ (resp. $Sx$), for all $x\in S$. Then $A$ (resp. $B$) is a non-empty set and $(xS) (yS) = xS$ (resp. $(Sx)(Sy) = Sy$), for all $x$, $y\in S$, i.e. $A$ (resp. $B$) is a left zero (resp. right zero) semigroup and $r : S \to A\times B$, $r(x) := (xS, \, Sx)$, for all $x\in S$ is an isomorphism of semigroups. Furthermore, we can easily see that in the above decomposition (unique up to an isomorphism of semigroups) we have $A = \{\star\}$ if and only if there exists an idempotent map $f = f^2 \colon S \to S$ such that $S = S_f$ and $B = \{\star\}$ if and only if there exists $f = f^2 \colon S \to S$ such that $S = {}_fS$.

  \item A structure theorem for Kimura semigroups was proven in \cite[Theorem 2]{Kim2}: for any Kimura semigroup $S$ there exist a rectangular subsemigroup $R \subseteq S$ of $S$ and a partition
    \begin{equation*}
      S = \coprod_{r\in R}S_r\textrm{ with }r\in S_r\textrm{ and }S_rS_t = \{rt\},\ \forall r,t\in R.
    \end{equation*}
  \end{enumerate}
\end{remarks}

\thref{structura} has an immediate consequence: for a prime number $p$, there are only two isomorphism types of rectangular semigroups of order $p$.

\begin{corollary}\colabel{primrect}
  Let $p$ be a prime number and $S$ a rectangular semigroup with $|S| = p$. Then $S$ is isomorphic to either the left zero semigroup or the right zero semigroup on $S$.
\end{corollary}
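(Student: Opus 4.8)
The plan is to read everything off the structure theorem \thref{structura}. By the equivalence of conditions (1) and (5) there, any rectangular semigroup $S$ admits an isomorphism of semigroups $S \cong A \times B$, where $A$ is a left zero semigroup, $B$ is a right zero semigroup, and $A\times B$ carries the rectangular band multiplication \equref{recband}; moreover this factorization is unique up to isomorphism. Since the isomorphism is in particular a bijection of underlying sets, I would first record the cardinality identity $|A| \cdot |B| = |S| = p$.

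Next I would exploit the primality of $p$. As $A$ and $B$ are non-empty, each has cardinality at least $1$, so the only way to factor $p = |A|\cdot|B|$ is $p = 1\cdot p$ or $p = p\cdot 1$. Hence exactly one of the two factors is a singleton.

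Finally I would dispose of the two cases by inspecting \equref{recband}. If $B = \{\star\}$, the multiplication collapses to $(a_1, \star)\cdot(a_2, \star) = (a_1, \star)$, so the first-coordinate projection is a semigroup isomorphism $A\times\{\star\}\cong A$ exhibiting $S$ as a left zero semigroup on a $p$-element set; symmetrically, if $A = \{\star\}$ then $(\star, b_1)\cdot(\star, b_2) = (\star, b_2)$ and the second-coordinate projection realizes $S$ as a right zero semigroup on a $p$-element set. Since any two left (resp. right) zero semigroups of the same cardinality are isomorphic, these two structures may be transported back to the left zero (resp. right zero) semigroup on $S$ itself, which are precisely the two alternatives in the statement.

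There is essentially no obstacle here: the argument is a one-line cardinality count layered on top of \thref{structura}. The only point meriting a moment's care is the bookkeeping in the last step, namely that a singleton factor may be discarded up to isomorphism and that the resulting left/right zero semigroup on an abstract $p$-element set coincides up to isomorphism with the left/right zero structure on the given set $S$.
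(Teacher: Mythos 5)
Your argument is correct and is exactly the ``immediate consequence'' the paper has in mind (the paper states \coref{primrect} without proof): decompose $S\cong A\times B$ via \thref{structura}, use primality of $|A|\cdot|B|=p$ to force one factor to be a singleton, and identify the remaining factor as a left or right zero semigroup. Nothing further is needed.
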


The following result, used extensively below, is probably folklore:

\begin{lemma}\lelabel{morfrecta}
  Let $A$, $B$, $C$ and $D$ be four sets. Then there exists a bijection between the set of all morphisms of rectangular bands $\psi : A\times B \to C\times D$ and the set of all pairs $(\sigma_1, \, \sigma_2)$, where $\sigma_1: A\to C$, $\sigma_2: B\to D$ are two maps. Under the above bijection the semigroup morphism $\psi = \psi_{(\sigma_1, \, \sigma_2)}: A\times B \to C\times D$ corresponding to $(\sigma_1, \, \sigma_2)$ is given by $\psi = \sigma_1 \times \sigma_2$.  Furthermore, there exists an isomorphism of groups ${\rm Aut} (A\times B) \cong \Sigma_A \times \Sigma_B$, where ${\rm Aut} (A\times B)$ is the automorphism group of the rectangular band $A\times B$.
\end{lemma}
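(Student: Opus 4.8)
The plan is to exhibit the asserted bijection explicitly by running the two directions against each other, and then to upgrade it to a group isomorphism in the specialization $C = A$, $D = B$. I would begin with the easy direction: given maps $\sigma_1 : A \to C$ and $\sigma_2 : B \to D$, set $\psi := \sigma_1 \times \sigma_2$, i.e.\ $\psi(a, b) = (\sigma_1(a), \sigma_2(b))$, and verify directly from the multiplication rule \equref{recband} that $\psi$ is a morphism of rectangular bands. Indeed $\psi\bigl((a_1, b_1)(a_2, b_2)\bigr) = \psi(a_1, b_2) = (\sigma_1(a_1), \sigma_2(b_2))$, while $\psi(a_1, b_1)\,\psi(a_2, b_2) = (\sigma_1(a_1), \sigma_2(b_2))$ as well, so every such pair does produce a morphism.

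For the converse — the substance of the statement — I would take an arbitrary morphism $\psi : A\times B \to C\times D$ and write $\psi(a, b) = (\psi_1(a, b), \psi_2(a, b))$ with component functions $\psi_1 : A\times B \to C$ and $\psi_2 : A\times B \to D$. Expanding the morphism identity $\psi\bigl((a_1, b_1)(a_2, b_2)\bigr) = \psi(a_1, b_1)\,\psi(a_2, b_2)$ coordinatewise forces $\psi_1(a_1, b_2) = \psi_1(a_1, b_1)$ and $\psi_2(a_1, b_2) = \psi_2(a_2, b_2)$ for all choices of indices. The first equation says $\psi_1$ is independent of its second argument, the second that $\psi_2$ is independent of its first; hence $\psi_1(a, b) =: \sigma_1(a)$ and $\psi_2(a, b) =: \sigma_2(b)$ well-define maps $\sigma_1 : A \to C$, $\sigma_2 : B \to D$ with $\psi = \sigma_1 \times \sigma_2$. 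Since the two assignments are visibly mutually inverse, this yields the claimed bijection.

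The one point that requires care — and essentially the only place the standing non-emptiness hypothesis on all sets enters — is the well-definedness and injectivity of the correspondence $(\sigma_1, \sigma_2) \mapsto \sigma_1 \times \sigma_2$. Recovering $\sigma_1$ from $\psi$ amounts to evaluating $\psi_1(a, \cdot)$ at some $b \in B$, and recovering $\sigma_2$ at some $a \in A$; both are legitimate precisely because $A, B \neq \emptyset$, and the same hypothesis ensures that $\sigma_1 \times \sigma_2 = \sigma_1' \times \sigma_2'$ implies $\sigma_1 = \sigma_1'$ and $\sigma_2 = \sigma_2'$. I expect this to be the subtlest (though still routine) bookkeeping in the argument.

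Finally, for the automorphism statement I would specialize to $C = A$, $D = B$ and note that composition respects the product form, $(\sigma_1 \times \sigma_2) \circ (\tau_1 \times \tau_2) = (\sigma_1 \circ \tau_1) \times (\sigma_2 \circ \tau_2)$, so under the bijection composition of morphisms corresponds to componentwise composition of pairs. It then suffices to check that $\sigma_1 \times \sigma_2$ is bijective exactly when each $\sigma_i$ is: surjectivity and injectivity of each factor follow from those of the product by fixing a point in the complementary factor (again invoking non-emptiness), while conversely a product of bijections is a bijection with inverse $\sigma_1^{-1} \times \sigma_2^{-1}$, itself of product form. Restricting the bijection to invertible elements therefore yields the group isomorphism ${\rm Aut}(A\times B) \cong \Sigma_A \times \Sigma_B$ given by $(\sigma_1, \sigma_2) \mapsto \sigma_1 \times \sigma_2$.
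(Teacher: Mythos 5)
Your proposal is correct and follows essentially the same route as the paper: both arguments use the rectangular-band multiplication to show that the two component functions of $\psi$ depend only on the first and second coordinate respectively (the paper phrases this as independence from a chosen base point $(a_0,b_0)$, you phrase it as a coordinatewise expansion of the morphism identity, but the computation is identical). You are in fact slightly more complete, since you also spell out the easy direction and the passage to $\Aut(A\times B)\cong\Sigma_A\times\Sigma_B$, which the paper leaves implicit.
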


\begin{proof}
Let $\psi : A\times B \to C\times D$ be a morphism of semigroups and $(a_0, \, b_0) \in A\times B$ a fixed element. Now define $\sigma_1: A\to C$ and $\sigma_2: B\to D$
by the formulas:
\begin{equation*}
\sigma_1 (a) := (\pi_C \circ \psi) (a, \, b_0), \qquad
\sigma_2 (b) := (\pi_D \circ \psi) (a_0, \, b)
\end{equation*}
for all $a\in A$ and $b\in B$, where $\pi_C\colon C\times D \to C$ and $\pi_D\colon C\times D \to D$ denote the canonical projections. We will show that $\sigma_1\colon A\to C$ and $\sigma_2 \colon B\to D$ do not depend on the choice of the element $(a_0, \, b_0) \in A\times B$. Indeed, let $(x, \, y) \in A\times B$ be an arbitrary element and $a\in A$. Using \equref{recband} we obtain:
\begin{eqnarray*}
\sigma_1 (a) &=& (\pi_C \circ \psi) (a, \, b_0) \\
&=& (\pi_C \circ \psi) \bigl( (a, \, y) \cdot (a_0, \, b_0) \bigl)\\
&=& \pi_C \bigl( \psi (a, \, y) \cdot  \psi (a_0, \, b_0) \bigl) \\
&=& \pi_C \bigl( (\pi_C \circ \psi) (a, \, y), \,  (\pi_D \circ \psi) (a_0, \, b_0) \bigl) \\
&=& (\pi_C \circ \psi) (a, \, y)
\end{eqnarray*}
Thus, $\sigma_1 (a) = (\pi_C \circ \psi) (a, \, y)$, for all $y\in B$. In a similar fashion, we can prove that
$\sigma_2 (b) = (\pi_D \circ \psi) (x, \, b)$, for all $x\in A$ and this finishes the proof.
\end{proof}

\section{Set-theoretic Frobenius-Separability equation}\selabel{sect2}

We introduce the set-theoretic version of the following equation which first appeared in \cite[Lemma 3.6]{BeidarFS97}. Its linear version was thoroughly studied in \cite{CIM} and \cite[Chapter 8]{CMZ} in connection to two classes of associative algebras: separable and Frobenius algebras.

\begin{definition} \delabel{fseq}
Let $X$ be a set. A map $R: X\times X \to X\times X$ is called a solution of the \emph{set-theoretic Frobenius-Separability equation} if
\begin{equation}\eqlabel{FS}
R^{12}R^{23} = R^{23}R^{13} = R^{13}R^{12}
\end{equation}
as maps $X\times X \times X \to X\times X \times X$ under the usual composition of maps. From now on, a map $R$ satisfying equation \equref{FS} will be called briefly a \emph{solution on $X$} and will be denoted by $R(x,y) = (x \cdot y, \, x\ast y) = \bigl(l_x(y), \, r_y(x) \bigl)$, for all $x$, $y\in X$.
\end{definition}

For a given set $X$, we denote by $\pi_1$, $\pi_2 \colon X\times X \to X$ the canonical projections on the first/second component, i.e. $\pi_1 (x, y) = x$, $\pi_2 (x, y) = y$, for all $x$, $y\in X$. For two sets $X$ and $Y$ we denote by $\tau_{X, Y} \colon X\times Y \to Y \times X$, the flip map defined by $\tau_{X, Y} \, (x, y) = (y, x)$, for all $x\in X$ and $y\in Y$. A map $R\colon X \times X \to X\times X$ will be written as:
\begin{equation}\eqlabel{notatR}
R(x, \, y) = (x \cdot y, \, x\ast y) = \bigl(l_x(y), \, r_y(x) \bigl), \quad {\rm for\,\, all\,\,} x,\, y \, \in X
\end{equation}
where $x \cdot y = l_x(y) := (\pi_1 \circ R) (x,y)$ and $x \ast y = r_y(x) := (\pi_2 \circ R) (x,y)$. For a map $R: X \times X \to X\times X$ we denote:
\begin{equation*}
  R^{12} := R \times {\rm Id}_X, \quad  R^{23} := {\rm Id}_X \times R, \quad R^{13} : X \times X \times X \to X \times X \times X
\end{equation*}
where $R^{13} = ({\rm Id}_X \times \tau_X) \circ R^{12} \circ ({\rm Id}_X \times \tau_X)$ and $\tau_X = \tau_{X, X}$ is the flip map.
The map $R \colon X \times X \to X\times X$ will be called:
\begin{enumerate}
\item \emph{left non-degenerate} (resp. \emph{right non-degenerate}) if the maps $l_x \colon X \to X$ (resp. $r_x : X \to X$) are bijections for all $x\in X$. Furthermore, $R$ is called non-degenerate if it is both left and right non-degenerate \cite[Definition 1.1]{etingof};

\item \emph{idempotent} if $R^2 = R$;

\item \emph{involutive} (resp. of \emph{finite order}) if $R^2 = {\rm Id}_{X\times X}$ (resp. $R^n = {\rm Id}_{X\times X}$, for some positive integer $n$);

\item \emph{unitary} \cite{dr} if $R^{21}R = {\rm Id}_{X\times X}$, where $R^{21} := \tau_X R \tau_X$;

\item \emph{diagonal} if $R(x, \,x) = (x, \,x)$, for all $x \in X$;

\item \emph{commutative} (resp. \emph{cocommutative}) \cite{Baaj} if $R^{12}R^{13} = R^{13}R^{12}$ (resp. $R^{13}R^{23} = R^{23}R^{13}$).
\end{enumerate}

First of all, remark that any solution $R$ on $X$ is a solution of the set-theoretic Yang-Baxter equation on $X$. Indeed, using \equref{FS} repeatedly yields:
\begin{equation*}
  R^{12}R^{23}R^{12} = R^{23}R^{13}R^{12} = R^{23}R^{12}R^{23}
\end{equation*}

Furthermore, in \reref{puterisolutii} we will see that if $R$ is a solution on $X$ and $n$ is a positive integer, then
$\tau_X \circ R^{2n}$ is also a set-theoretic solution of the Yang-Baxter equation on $X$.\\

Let $\mathcal{FSE}$ be the category of solutions of \equref{FS}, i.e. objects of $\mathcal{FSE}$ are pairs $(X, \, R)$, where $X$ is a set and $R$ a solution on $X$. A morphism $\sigma: (X, \, R) \to (X', \, R')$ in $\mathcal{FSE}$ is a map $\sigma: X \to X'$ such that $(\sigma \times \sigma) \circ R = R' \circ (\sigma \times \sigma)$. This is equivalent to $\sigma: X \to X'$ being a map such that $\sigma (x \cdot y) = \sigma(x) \cdot' \sigma(y)$ and $\sigma (x \ast y) = \sigma(x) \ast' \sigma(y)$, for all $x$, $y\in X$. Two solutions $(X, \, R)$ and $(X', \, R') \in \mathcal{FSE}$ are called \emph{isomorphic} if they are isomorphic as objects in the category $\mathcal{FSE}$. For a given solution $(X, \, R) \in \mathcal{FSE}$ we denote by ${\rm Aut} \, (X, \, R)$ its automorphism group in the category $\mathcal{FSE}$.

\begin{remark}\relabel{re:refelctive}
  The subcategory $\mathcal{FSE}$ is \emph{reflective} \cite[\S IV.3]{mcl} in the category $\mathcal{YB}$ of all solutions of the Yang-Baxter equation: the inclusion functor $i: \mathcal{FSE} \lhook\joinrel\xrightarrow{\quad} \mathcal{YB}$ has a left adjoint. This follows immediately from Freyd's celebrated {\it adjoint functor theorem} \cite[Corollary 8.17, p. 28]{bergman} applied to varieties of algebras. For more details see \cite[Section 5]{cm1}.
\end{remark}

\begin{examples}\exlabel{exemFS}
  \;
  \begin{enumerate}[(1)]

  \item For any set $X$, the identity map ${\rm Id}_{X\times X}$ and the flip map $\tau_{X} : X\times X \to X \times X$, $\tau_{X} \, (x, \, y) = (y, \, x)$ are solutions on $X$. 
  
  \item \label{item:constsol} If $a\in X$ is a fixed element of a set $X$, then $R_a (x, \, y) := (a, \, a)$, for all $x$, $y\in X$ is a solution on $X$ called the \emph{constant solution} on $X$. Any two constant solutions $R_a$ and $R_b$ are isomorphic and $\Aut (X, \, R_a) = \{ \sigma \in \Sigma_X  \, | \, \sigma (a) = a \} \cong \Sigma_{X \setminus \{a\}}$. We will show in \exref{simetrice} that the constant solutions are the only \emph{symmetric solutions} on a set $X$, that is solutions satisfying $R(x, \, y) = R(y, \, x)$, for all $x$, $y\in X$

  \item\label{item:fh} Let $f$, $h: X \to X$ be two maps and $R = R_{(h, \, f)}$ defined for any $x$, $y\in X$ by:
    \begin{equation}\eqlabel{exfsih}
      R_{(h, \, f)} : X\times X \to X\times X, \qquad R_{(h, \, f)} \, (x, \, y) := (h(y), \, f(x)).
    \end{equation}
    Then $R_{(h, \, f)}$ is a solution on $X$ if and only if $h^2 = h$ and $f = f\circ h = h \circ f$. Indeed, for any $x$, $y$ and $z\in X$ we have:
    \begin{eqnarray*}
      && R^{12}R^{23} (x, y, z) = \bigl(h^2(z), \, f(x), \, f(y) \bigl) \\
      && R^{23}R^{13} (x, y, z) = \bigl(h(z), \, h(f(x)), \, f(y)) \bigl)    \\
      && R^{13}R^{12} (x, y, z) = \bigl(h(z), \, f(x), \, f(h(y)) \bigl)
    \end{eqnarray*}
    and the conclusion follows. Furthermore, we can easily see that:
    \begin{equation} \eqlabel{centdoua}
      {\rm Aut} \, (X, \, R_{(h, \, f)})  = \Aut(h, \, f ) =
      \{ \sigma \in \Sigma_X \, | \,\, \sigma \circ h = h \circ \sigma, \,\,\, \sigma \circ f = f \circ \sigma \}
    \end{equation}
    the centralizer of the tuple $(X, \, (h, \, f))$. In particular, $R_f :=  R_{({\rm Id}_X, \, f)}$ is a solution on $X$, for any map $f: X \to X$. We will show in \thref{th:nedegenerate} that the maps  $R_f$ are the only left non-degenerate solutions on a set $X$.

    In the same fashion, we can prove that
    \begin{equation}\eqlabel{exfsih2}
      R^{(h, \, f)} : X\times X \to X\times X, \qquad R^{(h, \, f)} \, (x, \, y) := (h(x), \, f(y))
    \end{equation}
    is a solution on $X$ if and only if $h = f = f^2$. We denote $R^f := R^{(f, \, f)}$, for an idempotent map $f: X\to X$.  We will show in \coref{special_sol}
    that the maps  $R^f$ are the only idempotent solutions on a set $X$.

  \item\label{item:weirdsolfo} Let $A$, $B$ be two sets and $f: A \to A$, $\omega : B\to B$ two maps. Let
    ${}_fR_{\omega}: (A\times B)^2 \to (A\times B)^2$ given by:
    \begin{equation*}
    {}_fR_{\omega} \bigl( (a_1, \, b_1), \, (a_2, \, b_2) \bigl) \, = \, \bigl( ( f(a_1), \, b_2), \, (f(a_2), \, \omega(b_1) ) \bigl)
    \end{equation*}
    for all $a_1$, $a_2 \in A$ and $b_1$, $b_2 \in B$. Then we can prove that ${}_fR_{\omega}$ is a solution on $A\times B$ if and only if $f^2 = f$. \thref{th:main} will prove that $R: X\times X \to X\times X$ is a solution on $X$ such that $\pi_1 \circ R : X\times X \to X$ is surjective if and only if $(X, \, R) \cong (A\times B, \, {}_{{\rm Id}_A} R_{\omega})$.

  \item\label{item:prodsol} As in \exref{exkimura} (\ref{item:finitary}), the category $\mathcal{FSE}$ is a finitary variety of algebras, so one constructs products there in the obvious fashion (the forgetful functor to ${\rm Set}$ {\it creates} products \cite[\S V.1, Definition]{mcl}). Explicitly, if $(X, \, R_X)$ and $(Y, \, R_Y) \in \mathcal{FSE}$ are two solutions, then $(X\times Y, \, R_{X} \times R_{Y} := R_Y^{24}R_X^{13} ) \in \mathcal{FSE}$ is a solution on $X \times Y$ called the \emph{direct product solution}, where $R^{13}_{X}$, $R^{24}_{Y} \colon (X\times Y) \times (X\times Y) \to (X\times Y) \times (X\times Y)$ are the maps acting as $R_X$ (resp. $R_Y$) on the first and the third (resp. the second and the fourth) factors of the Cartesian product $(X\times Y)^2$. In particular, if $(X, \, R) \in \mathcal{FSE}$ is a solution and $Y$ an arbitrary set, then $(X\times Y, \, R^{13}) \in \mathcal{FSE}$ is a solution on $X\times Y$.

  Furthermore, the product with a fixed object $(X,\,R)$ of $\fse$ gives a functor
    \begin{equation*}
      \fse\xrightarrow[]{\quad(X,\,R)\times -\quad}\fse\downarrow (X,\,R).
   \end{equation*}
    to the {\it comma category} \cite[Exercise 3K]{ahs} consisting of arrows in $\fse$ with target $(X,\,R)$. It is an immediate check that this functor is right adjoint to
   \begin{equation*}
   \fse\downarrow (X,\,R)\xrightarrow[]{\quad\mathrm{forget}\quad} \fse.
   \end{equation*}
   This is a very general observation, applicable to arbitrary categories $\Cc$ with products: the functor $c\times -$ from $\Cc\to \Cc\downarrow c$ has the forgetful functor as its left adjoint.
  \end{enumerate}
\end{examples}

Kimura semigroups arise naturally in the study of the set-theoretic Frobenius-Separability equation:

\begin{proposition}\prlabel{detFS}
  Let $X$ be a set. A map $R: X\times X \to X\times X$, $R(x, \, y) = (x \cdot y, \, x\ast y)$ is a solution on $X$ if and only if for any  $x$, $y$, $z\in X$:
  \begin{eqnarray}
    && x\cdot (y \cdot z) = (x\cdot y) \cdot z = x\cdot z, \,\, {\rm i.e. \, (X, \cdot) \,\, is \,\, a \,\, Kimura \,\, semigroup;} \eqlabel{24} \\
    && x \ast (y \cdot z) = y \cdot (x \ast z) = x \ast y;  \eqlabel{25} \\
    && (x \cdot y) \ast z = y \ast (x \ast z) = y \ast z.   \eqlabel{26}
  \end{eqnarray}
\end{proposition}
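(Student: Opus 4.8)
The plan is to reduce the functional equation \equref{FS} to a coordinate-by-coordinate comparison of three explicitly computed maps on triples, and then to read off \equref{24}--\equref{26} directly. This is in essence a bookkeeping computation, so the real work lies in laying out the three composite maps cleanly.

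First I would record the action of the three basic maps on an arbitrary triple $(x, y, z) \in X \times X \times X$. By definition $R^{12}(x, y, z) = (x \cdot y, \, x \ast y, \, z)$ and $R^{23}(x, y, z) = (x, \, y \cdot z, \, y \ast z)$. The only point requiring care is $R^{13}$, which is not applied to two adjacent factors: unwinding the defining formula $R^{13} = (\Id_X \times \tau_X) \circ R^{12} \circ (\Id_X \times \tau_X)$ by swapping the last two coordinates, applying $R$ to the first two, and swapping back yields $R^{13}(x, y, z) = (x \cdot z, \, y, \, x \ast z)$, i.e.\ $R$ acts on the first and third slots while fixing the middle one. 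This is the single step where a position slip is most likely, so I would verify it carefully; everything else is mechanical.

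Next I would compose these pairwise to obtain the three sides of \equref{FS}. Straightforward substitution gives
\[
R^{12}R^{23}(x, y, z) = \bigl(x \cdot (y \cdot z),\ x \ast (y \cdot z),\ y \ast z\bigr),
\]
\[
R^{23}R^{13}(x, y, z) = \bigl(x \cdot z,\ y \cdot (x \ast z),\ y \ast (x \ast z)\bigr),
\]
\[
R^{13}R^{12}(x, y, z) = \bigl((x \cdot y) \cdot z,\ x \ast y,\ (x \cdot y) \ast z\bigr).
\]

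Since two maps into $X \times X \times X$ agree if and only if all three of their components agree, equation \equref{FS} is equivalent to the simultaneous equality of the three first components, the three second components, and the three third components, for all $x,y,z \in X$. Comparing first components gives $x \cdot (y \cdot z) = x \cdot z = (x \cdot y) \cdot z$, which is precisely \equref{24}; the embedded equality $x \cdot (y \cdot z) = (x \cdot y) \cdot z$ is associativity, while the common value $x \cdot z$ is the Kimura identity \equref{kim}, so $(X, \cdot)$ is indeed a Kimura semigroup. Comparing second components gives $x \ast (y \cdot z) = y \cdot (x \ast z) = x \ast y$, which is \equref{25}, and comparing third components gives $y \ast z = y \ast (x \ast z) = (x \cdot y) \ast z$, which is \equref{26}. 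Because this is a chain of equivalences---\equref{FS} holds for all triples iff all component identities hold iff \equref{24}--\equref{26} hold---both directions of the proposition follow simultaneously, and I expect no genuine obstacle beyond the correct computation of $R^{13}$.
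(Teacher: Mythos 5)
Your computation of $R^{12}$, $R^{23}$, $R^{13}$ and of the three composites matches the paper's proof exactly, and the componentwise comparison yielding \equref{24}--\equref{26} is precisely the argument given there. The proposal is correct and takes essentially the same approach as the paper.
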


\begin{proof}
Let $x$, $y$, $z\in X$. Then a direct calculation shows that:
\begin{eqnarray*}
&& R^{12}R^{23} (x, y, z) = \bigl( x \cdot (y \cdot z), \, x\ast (y\cdot z), \, y\ast z \bigl) \\
&& R^{23}R^{13} (x, y, z) = \bigl( x\cdot z, \, y \cdot (x \ast z), \, y \ast(x \ast z) \bigl)    \\
&& R^{13}R^{12} (x, y, z) = \bigl( (x \cdot y) \cdot z, \, x \ast y, \, (x\cdot y) \ast z \bigl)
\end{eqnarray*}
Thus, $R$ is a solution on $X$ if and only if $(X, \cdot)$ is a Kimura semigroup and the compatibility conditions \equref{25} and \equref{26} hold.
\end{proof}

\begin{remark} \relabel{famimaps}
If we write $R$ as $R(x,y) = \bigl(x\cdot y , \, r_y(x) \bigl)$ we obtain that $R$ is a solution on $X$ if and only if $(X, \cdot)$ is a Kimura semigroup
and the family of maps $r_x : R \to R$, $x\in X$, satisfy the following compatibility conditions for all $y$ and $z\in X$:
\begin{eqnarray}
r_y  = r_{y\cdot z} = l_y \circ r_z = r_y \circ l_z = r_{r_{y}(z)} \eqlabel{25a}
\end{eqnarray}
where $l_x (y) = x\cdot y$, for all $x$, $y\in X$.
\end{remark}

Based on \prref{detFS} we can now write solutions on $X$ in an equivalent form. For this purpose we introduce the following concept:

\begin{definition}\delabel{qmorf}
Let $(X, \cdot)$ be a Kimura semigroup. A \emph{quasi-endomorphism} of $X$ is a map $\vartheta : X \to X$ such that for any $x$, $y \in X$ we have:
\begin{equation}\eqlabel{qmobis}
y \cdot \vartheta (x \cdot y) = y \cdot \vartheta (y).
\end{equation}
\end{definition}

If $\vartheta : X \to X$ is a quasi-endomorphism of a Kimura semigroup $(X, \cdot)$ then the following holds for all $x$, $y$, $z \in X$:
\begin{equation}\eqlabel{qmo}
z \cdot \vartheta (x \cdot y) = z \cdot \vartheta (y)
\end{equation}
Indeed, using \equref{kim} and \equref{qmobis}, we have: $z \cdot \vartheta (x \cdot y) = z \cdot y \cdot \vartheta (x \cdot y) =
z \cdot y \cdot \vartheta (y) =  z \cdot \vartheta (y)$, as needed.

\begin{examples}\exlabel{quasi-m}
  \;
  \begin{enumerate}[(1)]
  \item Any endomorphism of a Kimura semigroup $(X, \cdot)$ is also a quasi-endomorphism. Moreover, if $\vartheta : X \to X$ is a quasi-endomorphism, then if we multiply with $\vartheta (x)$ on the left of \equref{qmo} we obtain, taking into account \equref{24}, that for all $x$, $y\in X$ we have $\vartheta (x) \cdot \vartheta (y) = \vartheta (x) \cdot \vartheta (x \cdot y)$. These two observations motivate our terminology.

  \item Any ''left $X$-module map'' $\vartheta : X \to X$, i.e. a map satisfying $\vartheta (x \cdot y) = x \cdot \vartheta (y)$, for all $x$, $y\in X$, is a quasi-endomorphism of $X$. This follows trivially by using \equref{24}.

  \item\label{item:qK} Let $\vartheta$ be a quasi-morphism on a Kimura semigroup $(X, \cdot)$ such that
    \begin{equation}\eqlabel{power_sol}
      x \cdot \vartheta^{2}(y) = x \cdot \vartheta(y)
    \end{equation}
    for all $x$, $y \in X$. For example, any idempotent quasi-morphism on $(X, \cdot)$ trivially fulfills the previous condition. Then $(X, \star)$ is a Kimura semigroup and $\vartheta$ is also a quasi-morphism on $(X, \star)$, where $x \star y = x \cdot \vartheta(y)$ for all $x$, $y \in X$. Indeed, we have:
    \begin{eqnarray*}
      && (x \star y) \star z = \bigl(x \cdot \vartheta(y)\bigl) \star z = \bigl(x \cdot \vartheta(y)\bigl) \cdot \vartheta(z) \stackrel{\equref{24}} = x \cdot \vartheta(z) = x \star z\\
      && x \star (y \star z) = x \star \bigl(y \cdot \vartheta(z)\bigl) = x \cdot \vartheta\bigl(y \cdot \vartheta(z)\bigl) \stackrel{\equref{qmo}} =
         x \cdot \vartheta^{2}(z) \stackrel{\equref{power_sol}} = x \cdot \vartheta(z) = x \star z
    \end{eqnarray*}
    for all $x$, $y$, $z \in X$. This shows that $(X, \star)$ is a Kimura semigroup. Moreover, we have:
    \begin{eqnarray*}
      z \star \vartheta(x \star y) = z \cdot \vartheta^{2}\bigl(x \cdot \vartheta(y)\bigl) \stackrel{\equref{power_sol}} = z \cdot \vartheta\bigl(x \cdot \vartheta(y)\bigl) \stackrel{\equref{qmo}} = z \cdot \vartheta^{2}(y) = z \star \vartheta (y)
    \end{eqnarray*}
    for all $x$, $y$, $z \in X$, which proves our second claim.
  \end{enumerate}
\end{examples}

\begin{definition}\delabel{def:ptkim}
  The category ${\rm KimSemigr}^{\bullet}$ of \emph{pointed Kimura semigroups} has
  \begin{itemize}
  \item pairs $(X, \, \vartheta)$ as objects, where $X$ is a Kimura semigroup and $\vartheta$ is a quasi-endomorphism of $X$;
  \item as morphisms $\alpha : (X, \, \vartheta_X) \to (Y, \, \vartheta_Y)$, semigroup morphisms $\alpha : X \to Y$ such that
    \begin{equation*}
      \alpha \bigl(y \cdot_X \vartheta_X (x) \bigl) = \alpha (y) \cdot_Y \vartheta_Y (\alpha(x)),\ \forall x,y\in X.
    \end{equation*}
  \end{itemize}
\end{definition}

Quasi-endomorphisms of Kimura semigroups play a crucial role in the description of all solutions of the Frobenius-Separability equation. Indeed, we have the following theoretic description of all solutions on $X$ using only the language of semigroup theory:

\begin{theorem}\thlabel{refqm}
Let $X$ be a set. A map $R \colon X\times X \to X\times X$, $R(x, \, y) = (x \cdot y, \, x\ast y)$ is a solution on $X$ if and only if $(X, \cdot)$ is a Kimura semigroup and
there exists a quasi-endomorphism $\vartheta : X \to X$ of $(X, \cdot)$ such that $R = R_{(\cdot, \, \vartheta)}$, where the canonical map $R_{(\cdot, \, \vartheta)} \colon X\times X \to X\times X$ is defined for any $x$, $y \in X$ by:
\begin{equation}\eqlabel{toatesol}
R_{(\cdot, \, \vartheta)} (x, \, y) = \bigl( x\cdot y, \, y\cdot \vartheta (x) \bigl).
\end{equation}

The functor $F : {\rm KimSemigr}^{\bullet} \to \mathcal{FSE}$, $F (X, \, \vartheta) := (X, \, R_{(\cdot, \, \vartheta)})$, $F(\alpha) : = \alpha$,
for all $(X, \, \vartheta) \in {\rm KimSemigr}^{\bullet}$ and all morphisms $\alpha \in {\rm KimSemigr}^{\bullet}$, is an equivalence of categories.
\end{theorem}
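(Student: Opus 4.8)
The plan is to lean on \prref{detFS}, which already reduces the assertion ``$R$ is a solution'' to the three families of identities \equref{24}--\equref{26} for the pair of operations $(\cdot,\ast)$ attached to $R$. All that remains for the characterization is to show that, under these identities, the second operation $\ast$ is forced into the shape $x\ast y=y\cdot\vartheta(x)$ for a canonical quasi-endomorphism $\vartheta$, and conversely that such a shape always yields a solution. Once this is established, the equivalence of categories will be mostly bookkeeping, because $F$ acts as the identity on underlying maps.

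For the forward implication I would define $\vartheta\colon X\to X$ by $\vartheta(x):=x\ast x$ (a canonical choice requiring no basepoint). The middle equality of \equref{25}, namely $y\cdot(x\ast z)=x\ast y$ for all $z$, specialised to $z=x$, gives immediately $y\cdot\vartheta(x)=x\ast y$, so $R=R_{(\cdot,\vartheta)}$. To see that $\vartheta$ is a quasi-endomorphism in the sense of \deref{qmorf}, I would rewrite the required identity \equref{qmobis}, $y\cdot\vartheta(x\cdot y)=y\cdot\vartheta(y)$, using the formula just proved: since $w\ast y=y\cdot\vartheta(w)$, the two sides equal $(x\cdot y)\ast y$ and $y\ast y$ respectively, and their equality is exactly the first equality of \equref{26} at $z=y$. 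For the converse implication I would feed $x\ast y:=y\cdot\vartheta(x)$ back into \prref{detFS}: the Kimura identity \equref{24} makes \equref{25} and the two ``easy'' equalities of \equref{26} collapse by absorption (e.g.\ $(y\cdot z)\cdot\vartheta(x)=y\cdot\vartheta(x)$), while the only genuinely nontrivial identity, $(x\cdot y)\ast z=y\ast z$, is precisely \equref{qmo}, which holds for every quasi-endomorphism.

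With the characterization in hand, the functor $F$ is handled as follows. Well-definedness on objects is the converse implication above; essential surjectivity---in fact outright surjectivity on objects---is the forward implication. The crux is full faithfulness: I would show that for fixed $(X,\vartheta_X)$ and $(Y,\vartheta_Y)$ the two hom-sets coincide as sets of maps $X\to Y$. A morphism in $\mathcal{FSE}$ is a map preserving both $\cdot$ and $\ast$; preserving $\cdot$ is being a semigroup morphism, and once that holds, preserving $\ast$ reads $\alpha(y\cdot\vartheta_X(x))=\alpha(y)\cdot\vartheta_Y(\alpha(x))$, which is verbatim the pointed-morphism condition of \deref{def:ptkim}. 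Thus the two notions of morphism are literally the same condition, $F$ is fully faithful, and functoriality (identities and composition) is immediate since $F$ leaves underlying maps untouched.

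I expect the only real subtlety to be conceptual rather than computational: $F$ is surjective but \emph{not} injective on objects---distinct quasi-endomorphisms $\vartheta_1\ne\vartheta_2$ can yield the same solution whenever $y\cdot\vartheta_1(x)=y\cdot\vartheta_2(x)$ for all $x$, $y$ (for instance on a left-zero semigroup, where the second coordinate is $y\cdot\vartheta(x)=y$ and never sees $\vartheta$). This is why one obtains an equivalence rather than an isomorphism of categories; full faithfulness is exactly what reconciles this, since the identity map then witnesses $(X,\vartheta_1)\cong(X,\vartheta_2)$ in ${\rm KimSemigr}^{\bullet}$.
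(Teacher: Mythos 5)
Your proposal is correct and follows essentially the same route as the paper: reduce to \prref{detFS}, extract $\vartheta$ from the identity $y\cdot(x\ast z)=x\ast y$, and identify \equref{26} with the quasi-endomorphism condition \equref{qmo}, with the categorical statement then following from full faithfulness plus essential surjectivity. The only (harmless) variation is your basepoint-free choice $\vartheta(x):=x\ast x$ in place of the paper's $\vartheta(x):=x\ast z_0$; your closing remark on non-injectivity of $F$ on objects is a correct and worthwhile observation that the paper leaves implicit.
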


\begin{proof}
  According to \prref{detFS}, a map $R: X\times X \to X\times X$ as depicted in the statement is a solution on $X$ if and only if $(X, \cdot)$ is a Kimura semigroup such that \equref{25} and \equref{26} hold. Let $z_0 \in X$ be an element of $X$ and define $\vartheta : X \to X$, $\vartheta (x) = x \ast z_0$, for all $x\in X$. Then the second equality in \equref{25} gives $x\ast y = y \cdot \vartheta (x)$, for all $x$, $y\in X$. As a consequence of $(X, \cdot)$ being a Kimura semigroup, \equref{25} holds and moreover, we obtain that \equref{26} holds if and only if $\vartheta : X \to X$ satisfies the compatibility condition \equref{qmo}, i.e. as we prove above, $\vartheta$ is a quasi-endomorphism of $(X, \, \cdot)$. This proves our first statement. In fact, we have proved that the functor $F : {\rm KimSemigr}^{\bullet} \to \mathcal{FSE}$, is essentially surjective. Furthermore, it can be immediately seen that $F$ is faithful while the way we defined morphisms in the category ${\rm KimSemigr}^{\bullet}$ guarantees that $F$ is also a full functor;  hence applying \cite[Theorem 1, pg. 93]{mcl} leads to the conclusion that $F$ is an equivalence of categories, as desired.
\end{proof}

In the context of \thref{refqm} we can also easily see that two solutions $(X, \, R_{(\cdot, \, \vartheta)} )$ and $(X', \, R_{(\cdot', \, \vartheta')}' ) \in \mathcal{FSE}$ are isomorphic if and only if there exists $\sigma \colon (X, \, \cdot) \to (X', \, \cdot') $ an isomorphism of Kimura semigroups such that $x' \cdot' \sigma \bigl( \vartheta(x) \bigl) = x' \cdot' \vartheta' \bigl( \sigma (x) \bigl)$, for all $x\in X$ and $x' \in X'$. In particular, the automorphism group ${\rm Aut} \, (X, \, R_{(\cdot, \, \vartheta)} )$ of the solution $(X, \, R_{(\cdot, \, \vartheta)} )$ is the subgroup of ${\rm Aut} \, (X, \, \cdot )$ consisting of all automorphisms $\sigma : X\to X$ of the Kimura semigroup $(X, \, \cdot)$ satisfying the following compatibility condition for all $x$, $y\in X$:
\begin{equation}\eqlabel{autogene}
y \cdot \sigma \bigl( \vartheta(x) \bigl) = y \cdot \vartheta \bigl( \sigma (x) \bigl).
\end{equation}

Given the relatively user-friendly structure of Kimura semigroups, we can rephrase \thref{refqm} so as to render it more amenable to computations and providing examples:

\begin{theorem}\thlabel{th:refqmbis}
  Let $X$ be a set.
  \begin{enumerate}[(1)]

  \item\label{item:solmeansthis} Giving an $\fse$ solution $R$ on $X$ is equivalent to specifying
    \begin{itemize}

    \item an idempotent function $h:X\to X$;

    \item projections
      \begin{equation*}
        \pi_1:\im~h \to A
        \quad\textrm{and}\quad
        \pi_2:\im~(h) \to B
      \end{equation*}
      identifying $\im~(h) \cong A\times B$;

    \item and a function $\vartheta':B\to B$.

    \end{itemize}
    Given these data, $R$ is given for any $x$, $y\in X$ by
    \begin{equation}\eqlabel{eq:rvartheta'}
      R(x, \, y) = ((\pi_1 h x, \, \pi_2 h y),\ (\pi_1 h y, \, \vartheta'\pi_2 h x)).
    \end{equation}

  \item\label{item:automeansthis} The automorphisms of a solution $R$ as above consist of those permutations $\sigma\in \Sigma_X$
    \begin{itemize}
    \item which centralize $h$;
    \item and induce permutations on $A$ and $B$ via $\pi_i$ respectively;
    \item and such that the induced permutation on $B$ centralizes $\vartheta'$.
    \end{itemize}

  \end{enumerate}
\end{theorem}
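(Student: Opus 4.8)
The plan is to deduce everything from \thref{refqm}, which already identifies solutions on $X$ with pointed Kimura semigroups $(X,\cdot,\vartheta)$, and then to translate the semigroup data $(\cdot,\vartheta)$ into the combinatorial data $(h,A,B,\vartheta')$ by exploiting the rectangular structure underlying any Kimura semigroup. The bridge is the squaring map: for a Kimura semigroup $(X,\cdot)$ I set $h(x):=x\cdot x$. Using the ``collapsing'' identity $x_1\cdots x_n=x_1 x_n$ (an iterate of \equref{kim}) one checks that $h$ is idempotent, that $\im~h=E(X)$ is the set of idempotents, and that $x\cdot y=h(x)\cdot h(y)$ for all $x,y$. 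Since $E(X)$ is a rectangular band (again by the collapsing identity), \thref{structura} furnishes an identification $\im~h=E(X)\cong A\times B$ with $A$ a left-zero and $B$ a right-zero semigroup; writing $\pi_1,\pi_2$ for the resulting projections, the rectangular multiplication \equref{recband} gives $x\cdot y=(\pi_1 h x,\,\pi_2 h y)$. This is exactly the first component of \equref{eq:rvartheta'}.

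For the second component I start from $x\ast y=y\cdot\vartheta(x)=(\pi_1 h y,\,\pi_2 h\vartheta(x))$ and seek $\vartheta':B\to B$ with $\pi_2 h\vartheta=\vartheta'\circ\pi_2 h$. The crux --- and the one genuinely non-formal step --- is to show that $\pi_2 h\vartheta$ \emph{factors through} $\pi_2 h$, i.e. that $\pi_2 h x=\pi_2 h x'$ forces $\pi_2 h\vartheta(x)=\pi_2 h\vartheta(x')$. Here I apply $\pi_2 h$ to the identity \equref{qmo} to obtain $\pi_2 h\vartheta(u\cdot y)=\pi_2 h\vartheta(y)$ for all $u,y$. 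As $u$ varies, $u\cdot y$ runs over all idempotents with second coordinate $\pi_2 h y$; feeding idempotent arguments back in shows first that $\pi_2 h\vartheta$ is constant on each fibre of $\pi_2 h$ over the idempotents, and then that its value at an arbitrary $y$ agrees with its value at any idempotent lying over $\pi_2 h y$. This yields a well-defined $\vartheta'$ with $\pi_2 h\vartheta=\vartheta'\pi_2 h$, whence \equref{eq:rvartheta'}. Conversely, given arbitrary data $(h,A,B,\vartheta')$ I read \equref{eq:rvartheta'} backwards: define $x\cdot y$ to be the element of $\im~h$ with coordinates $(\pi_1 h x,\pi_2 h y)$, verify directly that $(X,\cdot)$ is Kimura, choose any $\vartheta$ with $\pi_2 h\vartheta=\vartheta'\pi_2 h$ (for instance one with constant first coordinate), check the quasi-endomorphism condition \equref{qmobis}, and invoke \thref{refqm}. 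A short check that these two passages are mutually inverse --- the squaring map recovers $h$, and the decomposition of $E(X)$ is unique up to isomorphism by \thref{structura} --- completes part (\ref{item:solmeansthis}).

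For part (\ref{item:automeansthis}) I translate the description of $\Aut(X,R_{(\cdot,\vartheta)})$ recorded after \thref{refqm}: it consists of the semigroup automorphisms $\sigma$ of $(X,\cdot)$ satisfying \equref{autogene}. A semigroup automorphism preserves idempotents and commutes with squaring, hence centralizes $h$ and restricts to an automorphism of the rectangular band $\im~h\cong A\times B$; by \leref{morfrecta} such an automorphism is a product $\sigma_A\times\sigma_B$, i.e. $\sigma$ induces permutations on $A$ and $B$ through $\pi_1,\pi_2$, and conversely these two properties force $\sigma$ to respect $\cdot$. Finally, applying $\pi_2 h$ to \equref{autogene} and using $\pi_2 h\vartheta=\vartheta'\pi_2 h$ converts that condition into $\sigma_B\vartheta'=\vartheta'\sigma_B$, the requirement that the induced permutation on $B$ centralize $\vartheta'$. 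The main obstacle throughout is the well-definedness of $\vartheta'$ in part (\ref{item:solmeansthis}); once it is secured, the remaining verifications are routine manipulations with \equref{kim}, \equref{recband} and \leref{morfrecta}.
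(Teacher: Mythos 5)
Your proof is correct and follows essentially the same route as the paper: pass through \thref{refqm}, take $h(x)=x\cdot y|_{y=x}=x^2$, identify $\im~h=X^2$ as a rectangular band via \thref{structura}, and observe that the quasi-endomorphism condition \equref{qmo} is exactly what lets $\vartheta$ descend along $\pi_2 h$ to $\vartheta'$. The only difference is cosmetic — you derive the needed facts directly from the collapsing identity and spell out the well-definedness of $\vartheta'$ (constancy of $\pi_2h\vartheta$ on fibres via the idempotents $A\times\{\pi_2hy\}$), which the paper asserts more tersely by appeal to \cite[Theorem 2]{Kim2}.
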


\begin{proof}
  This is what \thref{refqm} translates to upon unwinding the structure of a general Kimura semigroup described in \cite[Theorem 2]{Kim2} (and its proof). We tackle the two claims in turn.
  \begin{enumerate}[(1)]
  \item The idempotent function $h:X\to X$ attached to a Kimura semigroup structure thereon is simply
    \begin{equation*}
      X\ni x\xmapsto{\quad}x^2\in X,
    \end{equation*}
    and its image is the total Kimura subsemigroup $X^2\subseteq X$. Being total, that subsemigroup is canonically a rectangular band $X^2\cong A\times B$ (\thref{structura}). This disposes of $h$ and $\pi_i$.

    The other ingredient, $\vartheta'$, is an avatar of the quasi-endomorphism $\vartheta$ of \thref{refqm} (as the notation suggests): the condition \equref{qmo} of being a quasi-endomorphism means precisely that $\vartheta(xy)$ and $\vartheta(y)$ have the same image through $\pi_2 h$ (for all $x,y\in X$), and hence
      \begin{equation*}
        \pi_2 h \vartheta(xy) = \pi_2 h\vartheta(y),\ \forall x,y\in X
      \end{equation*}
      is equivalent to
      \begin{equation*}
        \pi_2 h (y') = \pi_2 h (y)
        \Longrightarrow
        \pi_2 h \vartheta (y') = \pi_2 h \vartheta (y).
      \end{equation*}
      This means that $\vartheta$ descends to a function $\vartheta':B\to B$ via
      \begin{equation*}
        \vartheta'\pi_2 h(x) := \pi_2 h\vartheta(x);
      \end{equation*}
      \equref{toatesol} can then be rewritten as \equref{eq:rvartheta'} using the newly-defined function $\vartheta'$, and this makes it clear that $R$ uniquely determines and depends on $\vartheta'$ alone (rather than $\vartheta$).

  \item The first two bullet items in claim (\ref{item:automeansthis}) are the defining properties of being a Kimura semigroup automorphism. As for the third bullet point, the compatibility condition \equref{autogene} on an automorphism $\sigma$ means precisely that
    \begin{equation*}
      \pi_2h\sigma\vartheta(x) = \pi_2h\vartheta\sigma(x),\ \forall x\in X.
    \end{equation*}
    Given that both $\vartheta$ and $\sigma$ descend to $B$ along $\pi_2$ to $\vartheta'$ and, say, $\sigma'$ respectively, this condition means that $\sigma'\vartheta'=\vartheta'\sigma'$ as functions $B\to B$.  \qedhere

  \end{enumerate}
\end{proof}

\begin{remark}\relabel{re:exrevisited}
  \thref{th:refqmbis} can help place the solution examples above in some perspective. \exref{exemFS} (\ref{item:fh}), for instance, is what \thref{th:refqmbis} specializes to when $A$ is a singleton, so that $\pi_i$ are irrelevant and $B=\im~h$. On the other hand, \exref{exemFS} (\ref{item:weirdsolfo}) is what one obtains when the idempotent function $h:X\to X$ is of the form
  \begin{equation*}
    X=A\times B\xrightarrow{\quad f\times\id_B\quad} A\times B=X.
  \end{equation*}
 \end{remark}

As a first application of \thref{refqm} we classify all involutive (resp. diagonal) solutions on a given set $X$.

\begin{corollary}\colabel{invo}
  Let $X$ be a set. Then:
  \begin{enumerate}[(1)]
  \item A solution on $X$ is involutive if and only if it is diagonal. Furthermore, $R$ is an involutive solution on $X$ if and only if $(X, \cdot)$ is a rectangular semigroup and $R$ is defined for any $x$, $y \in X$ by:
    \begin{equation}\eqlabel{idemp_sol}
      R(x, \, y) = \bigl( x\cdot y, \, y\cdot x \bigl).
    \end{equation}
  \item For a decomposition of $X = A\times B$ as a product of two non-empty sets, a system of representatives for the isomorphism classes of all
  involutive solutions on $X$ is given by the maps $R : (A\times B)^2 \to (A\times B)^2$ defined as follows:
    \begin{equation} \eqlabel{involsol11}
      R \bigl( (a_1, \, b_1), \, (a_2, \, b_2) \bigl) = \bigl( (a_1, \, b_2), \, (a_2, \, b_1 ) \bigl)
    \end{equation}
  for all $a_1$, $a_2 \in A$ and $b_1$, $b_2 \in B$. For such an $R$, there exists an isomorphism of groups $\Aut (X, \, R) \cong \Sigma_A \times \Sigma_B$.
  \end{enumerate}
  In particular, if $X$ is a finite set with $|X| = n$, then the number of isomorphism classes of all involutive solutions on $X$ is equal to $\tau(n)$,
  the number of divisors of $n$.
\end{corollary}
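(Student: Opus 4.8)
The plan is to run everything through the parametrization of \thref{refqm}, writing an arbitrary solution as $R = R_{(\cdot,\,\vartheta)}$ for a Kimura semigroup $(X,\cdot)$ and a quasi-endomorphism $\vartheta$, and then to read off involutivity from a direct computation of $R^2$. Using the Kimura identity \equref{kim} (which collapses any product to its first and last factors) together with the quasi-endomorphism property \equref{qmo}, I expect
\begin{equation*}
  R^2(x,\,y) = \bigl(x\cdot\vartheta(x),\ y\cdot\vartheta(y)\bigr),
\end{equation*}
so that $R$ is involutive if and only if the single identity $x\cdot\vartheta(x) = x$ holds for all $x\in X$.

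The decisive observation is then that this identity forces totality: since $x = x\cdot\vartheta(x)\in X^2$ for every $x$, we get $X = X^2$, whence $(X,\cdot)$ is a rectangular semigroup by the equivalence $(3)\Leftrightarrow(1)$ of \thref{structura}. In particular $(X,\cdot)$ is a band, so $x\cdot x = x$ and $R(x,x) = \bigl(x,\,x\cdot\vartheta(x)\bigr) = (x,x)$; conversely the diagonal condition $R(x,x)=(x,x)$ unwinds to $x\cdot x = x$ together with $x\cdot\vartheta(x)=x$, which is precisely involutivity. This yields the equivalence of involutive and diagonal. To reach the explicit shape \equref{idemp_sol}, I would invoke the Kimura identity once more: in the presence of $x\cdot\vartheta(x) = x$ one has $y\cdot x = y\cdot(x\cdot\vartheta(x)) = y\cdot\vartheta(x)$, so the second component of $R$ simplifies and $R(x,y) = (x\cdot y,\,y\cdot x)$. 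The converse (that every rectangular $(X,\cdot)$ produces an involutive solution via this formula) follows by taking $\vartheta = \mathrm{Id}_X$, which is an endomorphism hence a quasi-endomorphism (\exref{quasi-m}) and for which $x\cdot\vartheta(x)=x\cdot x = x$.

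For the classification in part (2), I would transport the rectangular structure to a rectangular band $A\times B$ via \thref{structura}, so that \equref{idemp_sol} becomes the displayed formula \equref{involsol11}. Since the relevant quasi-endomorphism is $\mathrm{Id}$ on both sides, the isomorphism criterion recorded after \thref{refqm} degenerates to plain isomorphism of the underlying rectangular bands; by the uniqueness of the factorization in \thref{structura}(5) and by \leref{morfrecta}, such bands are classified by the pair of cardinalities $(|A|,|B|)$, and the automorphism group is $\Sigma_A\times\Sigma_B$. The final count is then pure bookkeeping: on a set of size $n$ an involutive solution corresponds to an ordered factorization $n = |A|\cdot|B|$, i.e. to a divisor of $n$, giving $\tau(n)$ isomorphism classes.

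The routine parts are the two semigroup-identity simplifications; the one step carrying the real content is recognizing that involutivity is equivalent to $x\cdot\vartheta(x)=x$ and that this alone forces the ambient Kimura semigroup to be total, hence rectangular. Everything afterward is a translation through the already-established structure theory and \leref{morfrecta}.
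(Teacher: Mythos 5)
Your proposal is correct and follows essentially the same route as the paper: compute $R^2(x,y)=(x\cdot\vartheta(x),\,y\cdot\vartheta(y))$, reduce involutivity to $x\cdot\vartheta(x)=x$, deduce rectangularity, simplify $y\cdot\vartheta(x)=y\cdot x$, and then classify via \thref{structura} and \leref{morfrecta}. The only (immaterial) difference is that you deduce rectangularity from totality ($X=X^2$, using $(3)\Leftrightarrow(1)$ of \thref{structura}), whereas the paper derives the band property $x^2=x\cdot x\cdot\vartheta(x)=x$ and uses $(1)\Leftrightarrow(2)$.
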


\begin{proof}
  \begin{enumerate}[(1)]
  \item In light of \thref{refqm}, any solution $R$ on $X$ is given by $R = R_{(\cdot, \, \vartheta)}$ as defined in \equref{toatesol}, where $(X, \, \cdot)$ is a Kimura semigroup and $\vartheta \colon X \to X$ a quasi-endomorphism of $(X, \, \cdot)$. Therefore, for any $x$, $y\in X$ we have:
    \begin{eqnarray*}
      && R^{2} (x, \, y) = R(x\cdot y, \, y\cdot \vartheta (x)) = (x \cdot y^{2} \cdot \vartheta(x),\, y \cdot \vartheta (x) \cdot \vartheta (x \cdot y)) \\
      && \stackrel{\equref{kim}} = (x  \cdot \vartheta(x), \, y  \cdot \vartheta(x \cdot y)) \stackrel{\equref{qmo}} = (x  \cdot \vartheta(x),\, y  \cdot \vartheta(y)).
    \end{eqnarray*}
    This shows that $R$ is an involutive solution if and only if
    \begin{equation}\eqlabel{idemp1}
      x  \cdot \vartheta(x) = x
    \end{equation}
    for all $x\in X$. On the other hand, $R$ is diagonal (that is $R(x, \, x) = (x, \, x)$, for all $x\in X$) if and only if $x^2 = x$ and $x \cdot  \vartheta (x) = x$, for all $x\in X$. We show now that the first condition (i.e. $(X, \cdot)$ being a band) follows from the second one; indeed, for any $x\in X$ we have:
    \begin{eqnarray*}
      x^{2} = x\cdot x = x \cdot x  \cdot \vartheta(x) \stackrel{\equref{kim}} = x  \cdot \vartheta(x) = x.
    \end{eqnarray*}
    Thus we have proved that $R$ is a diagonal solution on $X$ if and only if $R$ is involutive and both statements are equivalent
    (using $(1) \Leftrightarrow (2)$ of \thref{structura}) with the fact that $(X, \cdot)$ is a rectangular semigroup and \equref{idemp1} holds for
    a quasi-morphism $\vartheta$ of $(X, \cdot)$. Furthermore, in this case we obtain:
    \begin{eqnarray*}
      y  \cdot \vartheta(x) \stackrel{\equref{kim}} = y \cdot x  \cdot \vartheta(x) = y \cdot x
    \end{eqnarray*}
    for all $x$, $y \in X$ and therefore any involutive (diagonal) solution on $X$ takes the form depicted in \equref{idemp_sol}.

  \item We observe first that two solutions of the form \equref{idemp_sol} associated to two rectangular semigroups $(X, \cdot)$ and $(X, \cdot')$, are isomorphic if and only if the semigroups $(X, \cdot)$ and $(X, \cdot')$ are isomorphic. The conclusion follows by first applying $(1) \Leftrightarrow (4)$ of \thref{structura} and then \leref{morfrecta}. Furthermore, note that the formula \equref{involsol11} is obtained from \equref{idemp_sol} after the identification $X \cong A\times B$ and taking into account the multiplication of the rectangular band $A\times B$ given by \equref{recband}. Moreover, \leref{morfrecta} gives an isomorphism of groups $\Aut (X, \, R) \cong \Sigma_A \times \Sigma_B$ \qedhere
  \end{enumerate}
\end{proof}

The proof of \coref{invo} shows that if $R = R_{(\cdot, \, \vartheta)}$ is a solution on a set $X$, then for any $x$, $y\in X$ we have
$R^{2} (x, \, y) = (x  \cdot \vartheta(x), \, y  \cdot \vartheta(y))$, which is a map of the form \equref{exfsih2}. As $\vartheta$ is a quasi-endomorphism, we easily obtain that $R^2$ is also a solution on $X$ if and only if the following holds for all $x \in X$:

\begin{equation}\eqlabel{power_solbb}
   x \cdot \vartheta^{2}(x) = x \cdot \vartheta(x).
\end{equation}

Now we take one step forward and examine the situation when all powers $R^n$ of a solution $R$ are again solutions on $X$, where $R^{n}$ denotes the $n$-times composition of $R$ with itself. We have the following rigidity type result:

 \begin{corollary}\colabel{puteri}
  Let $R = R_{(\cdot, \, \vartheta)}$ be a solution on a set $X$. The following are equivalent:
  \begin{enumerate}[(1)]
  \item $R^n$ is a solution on $X$, for any positive integer $n$;

  \item $R^3$ is a solution on $X$;

  \item The quasi-endomorphism $\vartheta : X \to X$ satisfies the following compatibility condition:
  \begin{equation}\eqlabel{power_solcc}
   x \cdot \vartheta^{2}(y) = x \cdot \vartheta(y)
   \end{equation}
  for all $x$, $y \in X$.
\end{enumerate}
Furthermore, in this case $R^{2n} = R^2$ and $R^{2n+1} = R^3$, for all positive integers $n$.
\end{corollary}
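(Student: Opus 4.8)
The plan is to reduce everything to explicit formulas for the iterates $R^n=R_{(\cdot,\,\vartheta)}^n$ and to the characterization of solutions in \prref{detFS}. The implication $(1)\Rightarrow(2)$ is immediate since $R^3$ is one of the $R^n$, so the substance lies in $(2)\Rightarrow(3)\Rightarrow(1)$ together with the periodicity claim. First I would record the low iterates: we already know from the computation preceding \coref{invo} that $R^2(x,\,y)=\bigl(x\cdot\vartheta(x),\,y\cdot\vartheta(y)\bigr)$, and applying $R$ once more and simplifying with the Kimura identity \equref{kim} and the quasi-endomorphism identity \equref{qmo} yields
\[
R^3(x,\,y)=\bigl(x\cdot\vartheta(y),\,y\cdot\vartheta^2(x)\bigr).
\]
This is the only genuine calculation needed and it is routine.

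For $(2)\Rightarrow(3)$, write $R^3(x,\,y)=(x\diamond y,\,\dots)$ with $x\diamond y:=x\cdot\vartheta(y)$. If $R^3$ is a solution, then \prref{detFS} forces $(X,\diamond)$ to be a Kimura semigroup; in particular $x\diamond(y\diamond z)=x\diamond z$. Expanding the left-hand side with \equref{qmo} gives $x\cdot\vartheta^2(z)$ while the right-hand side is $x\cdot\vartheta(z)$, so $x\cdot\vartheta^2(z)=x\cdot\vartheta(z)$, which is precisely condition (3), i.e. \equref{power_solcc}. Thus (3) drops out of the Kimura (associativity) portion of \prref{detFS} alone.

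For $(3)\Rightarrow(1)$, observe that condition (3) is exactly the hypothesis \equref{power_sol} of \exref{quasi-m}(\ref{item:qK}), which tells us that $x\star y:=x\cdot\vartheta(y)$ defines a Kimura semigroup structure $(X,\star)$ on which $\vartheta$ remains a quasi-endomorphism. Since $y\star\vartheta(x)=y\cdot\vartheta^2(x)$, the formula for $R^3$ reads $R^3=R_{(\star,\,\vartheta)}$ in the sense of \equref{toatesol}, whence $R^3$ is a solution by \thref{refqm}. Likewise $R^2$ is a solution: specializing \equref{power_solcc} at $y=x$ gives \equref{power_solbb}, which is the condition established just before the corollary for $R^2$ to be a solution. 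To finish, I would compute $R^4=R^2\circ R^2$; the same two identities \equref{kim} and \equref{qmo} together with \equref{power_solcc} collapse it to $R^4=R^2$, and hence $R^5=R\circ R^4=R^3$. An immediate induction then yields $R^{2n}=R^2$ and $R^{2n+1}=R^3$ for all $n\geq 1$, simultaneously proving that every $R^n$ is a solution and establishing the final ``furthermore'' assertion.

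The step I expect to require the most care is $(2)\Rightarrow(3)$: one must recognize that only the Kimura identity for the first-component operation of $R^3$ — not the full solution conditions \equref{25}--\equref{26} — is needed to isolate \equref{power_solcc}, and refrain from attempting to verify the remaining conditions. Everything else is careful bookkeeping with \equref{kim} and \equref{qmo}, and the identification $R^3=R_{(\star,\,\vartheta)}$ via \exref{quasi-m}(\ref{item:qK}) is what makes the argument clean rather than computational.
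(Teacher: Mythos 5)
Your proposal is correct, and its overall skeleton (compute $R^2$ and $R^3$, isolate \equref{power_solcc} as the obstruction for $R^3$, then collapse all higher powers) matches the paper's. The two genuine differences are local but worth noting. First, for $(2)\Rightarrow(3)$ the paper computes all three triple compositions $S^{12}S^{23}$, $S^{23}S^{13}$, $S^{13}S^{12}$ of $S:=R^3$ and observes that of the three resulting conditions the second and third follow from the first; you extract \equref{power_solcc} more economically from the single necessary condition that the first-component operation $x\diamond y=x\cdot\vartheta(y)$ be Kimura, which is legitimate by \prref{detFS}. Second, and more substantively, for the converse you identify $R^3$ as the canonical solution $R_{(\star,\,\vartheta)}$ attached to the deformed Kimura semigroup $(X,\star)$ of \exref{quasi-m} (3) and invoke \thref{refqm}, whereas the paper re-verifies the solution property computationally; your route actually puts \exref{quasi-m} (3) to the use it seems designed for, and is arguably cleaner. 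Finally, the paper derives the periodicity $R^{2n}=R^2$, $R^{2n+1}=R^3$ from the general closed formula \equref{puterile} for all iterates (valid without hypothesis (3)), while you get it by checking $R^4=R^2$ directly under \equref{power_solcc} and inducting; both are sound, the paper's version having the minor side benefit of recording \equref{puterile} for later use in \reref{puterisolutii}.
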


\begin{proof} By induction we can immediately show that for any positive integer $n$ we have:
  \begin{eqnarray} \eqlabel{puterile}
    R^{n} ( x , \, y ) = \left \{\begin{array}{lll}
                                   \bigl( x \cdot \vartheta^k (x), \, y \cdot \vartheta^k (y)  \bigl),\,\,\, \mbox {if}\,\, n = 2k\\
                                   \bigl( x \cdot \vartheta^k (y), \, y \cdot \vartheta^{k+1} (x)  \bigl),\,\,\, \mbox {if} \,\, n = 2k+1
                                 \end{array} \right.
  \end{eqnarray}
  for all $x$, $y\in X$. The implication $(1) \Rightarrow (2)$ follows trivially; we show now that $(2) \Leftrightarrow (3)$. To this end, we denote $S (x, \, y) := R^3 (x, \, y) = (x \cdot \vartheta(y), \, y \cdot \vartheta^{2}(x) )$, for all $x$, $y\in X$. Using intensively that
  $(X, \cdot)$ is a Kimuara semigroup and $\vartheta : X \to X$ is a quasi-endomorphism we can prove that:
  \begin{eqnarray*}
    && S^{12}S^{23} (x, y, z) = \bigl( x \cdot \vartheta^2(z) , \, y \cdot \vartheta^2(x), \, z \cdot \vartheta^2 (y) \bigl) \\
    && S^{23}S^{13} (x, y, z) = \bigl( x \cdot \vartheta(z) , \, y \cdot \vartheta^3(x), \, z \cdot \vartheta^2 (y) \bigl)    \\
    && S^{13}S^{12} (x, y, z) = \bigl( x \cdot \vartheta(z) , \, y \cdot \vartheta^2(x), \, z \cdot \vartheta^2 (x\cdot \vartheta (y) ) \bigl)
  \end{eqnarray*}
  Hence, $R^3$ is a solution on $X$ if and only if for any $x$, $y$, $z\in X$ we have
  \begin{equation*}
    x \cdot \vartheta^2(z) = x \cdot \vartheta(z), \quad y \cdot \vartheta^2(x) = y \cdot \vartheta^3(x), \quad z \cdot \vartheta^2 (y) = z \cdot \vartheta^2 (x\cdot \vartheta (y))
  \end{equation*}
  As $\vartheta : X \to X$ is a quasi-endomorphism of the Kimura semigroup $(X, \cdot)$, both the second and third compatibilities follow from the first one, which is exactly \equref{power_solcc} and this proves $(2) \Leftrightarrow (3)$. The proof will be finished once we show that $(3) \Rightarrow (1)$. Henceforth we assume \equref{power_solcc} to hold which trivially implies that \equref{power_solbb} is fulfilled as well and therefore $R^2$ is also a solution on $X$. Furthermore, \equref{puterile} implies that for all positive integers $n$ we have $R^{2n} = R^2$ and $R^{2n+1} = R^3$ and this finishes the proof.
\end{proof}

\begin{remark} \relabel{puterisolutii}
Let $R = R_{(\cdot, \, \vartheta)}$ be a solution on a set $X$ and $n$ a positive integer. Then using \equref{puterile} we obtain that
$\tilde{R} := \tau_X \circ R^{2n}$ is a solution of the set-theoretic Yang-Baxter equation $\tilde{R}^{12}\tilde{R}^{23}\tilde{R}^{12} = \tilde{R}^{23}\tilde{R}^{12}\tilde{R}^{23}$ of Lyubashenko type \cite[Example 1]{dr}.
\end{remark}

Next we classify all idempotent FS-solutions on a given set $X$. In the process of doing so, we will run across idempotent functions $f:X\to X$, regarded as equivalent whenever they are mutually conjugate under permutations of $X$. This makes the following simple observation relevant.

\begin{lemma}\lelabel{le:idm=part}
  Let $X$ be a set. There is a bijection between the set of $\Sigma_X$-conjugacy classes of idempotent functions $f:X\to X$ and the set of partitions of the cardinal number $|X|$, meaning multisets of non-zero cardinal numbers with sum $|X|$.
\end{lemma}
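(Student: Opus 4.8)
The plan is to read off from an idempotent $f$ the combinatorial datum it naturally carries --- the way it partitions $X$ into fibers --- and to show that the multiset of fiber cardinalities is a complete conjugacy invariant.

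First I would record the elementary structural fact that for $f=f^2$ one has $\im~f=\{x\in X\mid f(x)=x\}$: if $y=f(x)$ then $f(y)=f^2(x)=f(x)=y$, and conversely every fixed point lies in the image. Consequently $X$ is partitioned into the nonempty fibers $f^{-1}(y)$ indexed by $y\in\im~f$, and each such fiber contains exactly one point of $\im~f$ (namely the fixed point $y$, to which $f$ collapses the whole fiber). Thus an idempotent is precisely a partition of $X$ into nonempty blocks together with a choice of one distinguished point in each block. To $f$ I then associate the multiset
\begin{equation*}
  \lambda(f):=\{\,|f^{-1}(y)|\ \mid\ y\in\im~f\,\}
\end{equation*}
of fiber cardinalities; since the fibers are nonempty and partition $X$, this is a multiset of nonzero cardinals summing to $|X|$, i.e.\ a partition of $|X|$.

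Next I would check that $\lambda$ descends to conjugacy classes and is injective on them. If $g=\sigma\circ f\circ\sigma^{-1}$ for $\sigma\in\Sigma_X$, then $\sigma$ carries the fiber $f^{-1}(y)$ bijectively onto $g^{-1}(\sigma(y))$, so the two multisets of fiber cardinalities coincide and $\lambda$ is constant on conjugacy classes. For injectivity, suppose $\lambda(f)=\lambda(g)$. Equality of the two multisets of fiber sizes lets me choose a size-preserving bijection between the index sets $\im~f$ and $\im~g$; for each matched pair of fibers $f^{-1}(y)$, $g^{-1}(y')$ of equal cardinality I choose a bijection sending $y\mapsto y'$, and I glue these into a permutation $\sigma\in\Sigma_X$. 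By construction $\sigma$ sends each fiber of $f$ to a fiber of $g$ and each fixed point of $f$ to the corresponding fixed point of $g$, whence $g=\sigma\circ f\circ\sigma^{-1}$, so $f$ and $g$ are conjugate.

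Finally, surjectivity is immediate: given any partition of $|X|$ I realize it by an actual partition of $X$ into nonempty blocks of the prescribed cardinalities, pick a point in each block, and let $f$ fix the chosen points and send every other point of a block to its distinguished point; then $\lambda(f)$ is the given partition. The one point deserving care --- and the main (mild) obstacle --- is the existence of the per-block bijections $y\mapsto y'$ in the injectivity step when $X$ is infinite: a matched pair of fibers has a common cardinality $\kappa$, and since removing one point leaves cardinality $\kappa-1$ when $\kappa$ is finite and $\kappa$ when $\kappa$ is infinite, in either case $|f^{-1}(y)\setminus\{y\}|=|g^{-1}(y')\setminus\{y'\}|$, so a bijection fixing the prescribed images of the distinguished points indeed exists. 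Assembling these bijections over all blocks yields the required $\sigma$ and completes the proof.
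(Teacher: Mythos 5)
Your proof is correct and follows the same approach as the paper, which simply notes that the multiset of non-zero fiber cardinalities is the complete $\Sigma_X$-invariant and that every partition is realizable. You supply the details the paper leaves implicit (that $\im f$ equals the fixed-point set, the gluing of fiber bijections, and the cardinality check $|\kappa\setminus\{pt\}|$ in the infinite case), all of which are accurate.
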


\begin{proof}
  Simply note that for an idempotent self-map $f:X\to X$ the only $\Sigma_X$-invariant is the multiset of non-zero cardinal numbers
  \begin{equation*}
    |f^{-1}(x)|,\ x\in X.
  \end{equation*}
  Conversely, given such a multiset, an $f$ can be constructed which recovers it, and that $f$ will be unique up to $\Sigma_X$-conjugation.
\end{proof}

\begin{corollary}\colabel{special_sol}
 Let $X$ be a set and $R$ a solution on $X$. The following are equivalent:
  \begin{enumerate}[(1)]
  \item\label{item:1}  $R$ is idempotent;
  \item\label{item:2}  $R$ is commutative;
  \item\label{item:3}  $R$ is cocommutative;
  \item\label{item:4}  $R = R^f$, for some idempotent function $f = f^2 : X \to X$, where $R^{f}$  is given by \equref{exfsih2}, i.e. $R^f (x, \, y) = (f(x), \, f(y))$, for all $x$, $y\in X$.
  \end{enumerate}

In particular, if $X$ is a finite set with $|X| = n$, then the number of isomorphism classes of all idempotent solutions on $X$ is equal to the partition number $p(n)$, while the number of all idempotent solutions on $X$ is $\Sigma_{k=0}^n \, \tbinom{n}{k} \, k^{n-k}$.
\end{corollary}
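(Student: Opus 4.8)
The plan is to run everything through the equivalence of \thref{refqm}, writing an arbitrary solution as $R = R_{(\cdot,\,\vartheta)}$ with $R(x,y) = (x\cdot y,\, y\cdot\vartheta(x))$ for a Kimura semigroup $(X,\cdot)$ and a quasi-endomorphism $\vartheta$. Each of the four conditions will then be translated into an explicit pair of identities on the data $(\cdot,\vartheta)$, and the point I want to extract is that all four translate into the \emph{same} pair, namely that $x\cdot y$ is independent of $y$ and that $y\cdot\vartheta(x)$ is independent of $x$. Once this is in hand, the equivalences $(1)\Leftrightarrow(2)\Leftrightarrow(3)\Leftrightarrow(4)$ are immediate.

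For $(1)\Leftrightarrow(4)$ I would start from the formula $R^2(x,y) = (x\cdot\vartheta(x),\, y\cdot\vartheta(y))$ recorded just before \coref{invo}; comparing it with $R(x,y)=(x\cdot y,\,y\cdot\vartheta(x))$ shows that $R^2=R$ is exactly the two independence conditions above. Setting $f(x):=x\cdot x$, these conditions force $x\cdot y = f(x)$ and $y\cdot\vartheta(x)=f(y)$ for all $x,y$, so $R=R^f$ with $f=f^2$ idempotent (by \equref{kim}); conversely $R^f$ is visibly idempotent. For $(1)\Leftrightarrow(2)$ and $(1)\Leftrightarrow(3)$ I would compute the four triple compositions $R^{12}R^{13}$, $R^{13}R^{12}$, $R^{13}R^{23}$, $R^{23}R^{13}$ on a general triple $(x,y,z)$, using $R^{13}(x,y,z)=(x\cdot z,\,y,\,z\cdot\vartheta(x))$ and simplifying the resulting products with the Kimura identity \equref{24} and the quasi-endomorphism identity \equref{qmo}. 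Reading off the coordinatewise equalities, both the commutativity relation $R^{12}R^{13}=R^{13}R^{12}$ and the cocommutativity relation $R^{13}R^{23}=R^{23}R^{13}$ collapse to precisely the same two independence conditions.

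For the enumeration I would use $(1)\Leftrightarrow(4)$: the assignment $f\mapsto R^f$ is a bijection from idempotent self-maps of $X$ to idempotent solutions (one recovers $f$ as $x\mapsto\pi_1 R^f(x,y)$), and a morphism $R^f\to R^g$ unwinds to the single condition $\sigma f=g\sigma$, so $R^f\cong R^g$ exactly when $f,g$ are conjugate. Thus the isomorphism classes of idempotent solutions are in bijection with $\Sigma_X$-conjugacy classes of idempotent functions, which \leref{le:idm=part} identifies with partitions of $|X|=n$, giving the count $p(n)$. For the raw count of idempotent solutions I would count idempotent functions directly: an idempotent $f$ fixes its image pointwise, so choosing an image of size $k$ in $\binom{n}{k}$ ways and mapping the remaining $n-k$ elements arbitrarily into it in $k^{n-k}$ ways yields $\sum_{k=0}^n\binom{n}{k}k^{n-k}$.

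The bulk of the work, and the only place requiring care, is the second paragraph: the four composition computations must be carried out and then simplified with \equref{24} and \equref{qmo}, and the main thing to verify is that the conditions extracted from $(2)$ and $(3)$ are neither weaker nor stronger than those from $(1)$ — that is, that all three genuinely collapse to the identical pair of independence statements, with the apparent extra coordinate equalities (such as $x\cdot\vartheta(y)=x\cdot z$ arising from cocommutativity) being automatic consequences once both independence conditions hold.
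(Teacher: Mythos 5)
Your proposal is correct and follows essentially the same route as the paper: pass through \thref{refqm}, compute $R^2$ and the four triple compositions, read off that idempotency, commutativity and cocommutativity each force $x\cdot y$ and $y\cdot\vartheta(x)$ to depend only on $x$ and $y$ respectively (the extra coordinate equalities being automatic, as you note), and then count isomorphism classes via conjugacy classes of idempotent maps and \leref{le:idm=part}. The only divergence is cosmetic: you prove the count $\sum_{k=0}^n\binom{n}{k}k^{n-k}$ of idempotent functions directly, where the paper cites Harris--Schoenfeld.
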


\begin{proof}
We will show, using \thref{refqm}, that $R$ is an idempotent (resp. (co)commutative) solution on $X$ if and only if $R = R^f$, for some idempotent function $f = f^2 : X \to X$, where $R^{f}$ is given by \equref{exfsih2}.

Indeed, applying \thref{refqm} any solution $R$ on $X$ is given by $R = R_{(\cdot, \, \vartheta)}$ as defined in \equref{toatesol}, where $(X, \, \cdot)$ is a Kimura semigroup and $\vartheta \colon X \to X$ is a quasi-endomorphism of $(X, \, \cdot)$. Therefore, using \equref{24} and \equref{qmo}, we obtain as in the proof of \coref{invo}, that:
  \begin{equation*}
    R^2 (x, \, y) = \bigl(x \cdot \vartheta (x), \, y \cdot \vartheta (y) \bigl)
  \end{equation*}
Hence $R$ is idempotent if and only if $x \cdot \vartheta (x) = x\cdot y$ and $y \cdot \vartheta (y) = y \cdot \vartheta (x)$, for all $x$, $y\in X$. Setting $y = x$ and $x = z$ in this last relation we obtain
\begin{equation*}
  R^2=R
  \iff
  x \cdot \vartheta (x) = x\cdot y = x \cdot \vartheta (z)
\end{equation*}
for all $x$, $y$, $z\in X$. After fixing some $z_0 \in X$ and defining $f: X \to X$, $f (x) := x \cdot \vartheta (z_0)$ we obtain that $x \cdot y = f(x)$, for all $x$, $y\in X$ and hence $R = R^f$. Moreover, $f = f^2$ since $R$ is a solution on $X$ (see \equref{exfsih2} of \exref{exemFS}). This shows the equivalence (\ref{item:1}) $\iff$ (\ref{item:4}).

Furthermore, let $R = R_{(\cdot, \, \vartheta)}$ be a solution on $X$ as defined in \equref{toatesol}, where $(X, \, \cdot)$ is a Kimura semigroup and $\vartheta \colon X \to X$ is a quasi-endomorphism of $(X, \, \cdot)$. A straightforward computation shows that for all $x$, $y$, $z \in X$ we have:
\begin{eqnarray*}
  R^{13}R^{23}(x,\,y,\,z) &=& R^{13}(x,\, y \cdot z,\, z \cdot \vartheta(y)) = (x \cdot z \cdot \vartheta(y),\, y \cdot z,\, z \cdot \vartheta(y) \cdot \vartheta(x))\\
                                      &=& (x \cdot \vartheta(y),\, y \cdot z,\, z \cdot \vartheta(x)) \\
  R^{23}R^{13}(x,\,y,\,z) &=&  R^{23}(x \cdot z,\, y,\, z \cdot \vartheta(x)) = (x \cdot z,\, y \cdot z \cdot \vartheta(x),\, z \cdot \vartheta(x) \cdot \vartheta(y))\\
                                      &=& (x \cdot z,\, y \cdot \vartheta(x),\, z \cdot \vartheta(y))
\end{eqnarray*}
Therefore, $R$ is cocommutative if and only if we have:
\begin{equation*}x \cdot  \vartheta(y) = x \cdot z, \,\,\,y \cdot z = y \cdot \vartheta(x),\,\,\, z \cdot \vartheta(x) = z \cdot \vartheta(y)\end{equation*}
for all $x$, $y$, $z \in X$. Now if we fix $y := z_0 \in X$ and define $f: X \to X$, $f (x) := x \cdot \vartheta (z_0)$ we obtain $x \cdot z = f(x)$ and $z \cdot \vartheta(x) = f(z)$ for all $x$, $z\in X$. This shows that $R(x,\, z) = (f(x),\, f(z)) = R^{f}(x,\,z)$, for all $x$, $z\in X$. Finally, $f^{2} = f$ since $R$ is a solution on $X$ (see \equref{exfsih2} of \exref{exemFS}), which proves the equivalence (\ref{item:3}) $\iff$ (\ref{item:4}).

Similarly, for all $x$, $y$, $z \in X$ we have:
\begin{eqnarray*}
R^{12}R^{13}(x,\,y,\,z) &=& (x \cdot y,\, y \cdot \vartheta(z),\, z \cdot \vartheta(x)) \\
R^{13}R^{12}(x,\,y,\,z) &=& (x \cdot z,\, y \cdot \vartheta(x),\, z \cdot \vartheta(y))
\end{eqnarray*}
Thus, $R$ is commutative if and only if we have:
\begin{equation*}x \cdot y = x \cdot z,\,\,\, y \cdot \vartheta(z) = y \cdot \vartheta(x),\,\,\,z \cdot \vartheta(x) = z \cdot \vartheta(y)\end{equation*}
for all  $x$, $y$, $z \in X$. Now if we fix $z := z_0 \in X$ and define $f: X \to X$, $f (x) := x \cdot z_0$ we obtain $x \cdot y = f(x)$, for all $x$, $y\in X$.
Hence, we obtain that $R = R^f$ and obviously $f = f^{2}$ since $R$ is a solution on $X$ (see \equref{exfsih2} of \exref{exemFS}). The equivalence (\ref{item:2}) $\iff$ (\ref{item:4}) now follows.

Finally, we observe that two idempotent solutions $(X, \, R^f)$ and $(X, \, R^g)$ are isomorphic in $\mathcal{FSE}$ if and only if the idempotent maps $f$ and $g$ are conjugate. Thus, a system of representatives for the isomorphism classes of all idempotent solutions on $X$ is parameterized by the set of conjugation classes $[f]$ of all idempotent maps $f: X \to X$. The conjugacy statement on partition numbers then follows from \leref{le:idm=part}, while the last assertion was proven in \cite[Theorem 1]{harris}.
\end{proof}

As a last application of \thref{refqm} we classify all symmetric solutions on a set $X$.

\begin{example} \exlabel{simetrice}
Let $R$ be a symmetric solution on $X$. In light of \thref{refqm}, $R$ is given in $R = R_{(\cdot, \, \vartheta)}$ as defined by \equref{toatesol}, where $(X, \, \cdot)$ is a Kimura semigroup and $\vartheta \colon X \to X$ a quasi-endomorphism of $(X, \, \cdot)$. Such a solution is symmetric if and only if $(X, \, \cdot)$ is a commutative semigroup and $x \cdot \vartheta (y) = y \cdot \vartheta (x)$, for all $x$, $y\in X$. Considering an arbitrary element  $z \in X$ and multiplying the previous compatibility on the right with $z$ we obtain, taking into account \equref{kim}, that $x \cdot z = y \cdot z$, for all $x$, $y$, $z\in X$. Fix $y_0 \in X$ and define $\lambda : X \to X$, $\lambda (z) := y_0 \cdot z$, for all $z\in Z$. Therefore $x \cdot z = \lambda (z)$, for all $x$, $z\in X$ and thus,
$R (x, \,y) = (\lambda(y), \, \lambda (\vartheta (x)))$, for all $x$, $y\in X$. Such a solution is symmetric if and only if $\lambda$ is the constant map, i.e. there exists $a\in X$ such that $\lambda(x) = a$, for all $x\in X$ and hence $R = R_a$ is the constant solution $R_a \, (x, \, y) = (a, \, a)$, for all $x$, $y\in X$.
We also note that any two constant solutions $R_a$ and $R_b$ on $X$ are isomorphic.
\end{example}

Using \prref{detFS} and \thref{refqm} for describing and classifying all solutions on a given set $X$ comes down to: (1) describing and classifying the set of all Kimura semigroups that can be defined on $X$, and (2) for a fixed Kimura semigroup structure $\cdot$ describing the set of all binary operations $\ast : X \times X \to X$ that satisfy \equref{25} and \equref{26} or, equivalently, all quasi-endomorphisms of $(X, \, \cdot)$.  In the sequel, we will work with binary operations $\ast$ instead of quasi-endomorphisms. As basic examples and for future use, we describe all solutions on a set $X$ such that the Kimura semigroup $(X, \cdot)$ is of the form ${}_fX$ or $X_f$ as described in \exref{exkimura} (\ref{item:5}).

\begin{examples}\exlabel{lrzero}
  \;
  \begin{enumerate}[(1)]
  \item\label{item:6} Let $f = f^2 : X \to X$ such that $(X, \cdot) = {}_fX$, that is $x\cdot y = f(x)$, for all $x$, $y\in X$. Let $R$ be a solution on $X$ of the form $R(x, \, y) = (f(x), \, x\ast y)$. Taking into account that $f$ is an idempotent map, all compatibility conditions \equref{25} and \equref{26} are reduced to $x \ast y = f(y)$, for all $x$, $y\in X$ and thus we obtain $R (x, \, y) = (f(x), \, f(y))$, the map denoted by $R^f$ in \equref{exfsih2}. Observe that $R^f$ is neither left nor right non-degenerate.

  \item\label{item:7} Now let $h = h^2 : X \to X$ such that $(X, \cdot) = X_h$, that is $x\cdot y = h(y)$, for all $x$, $y\in X$. We will describe all solutions on $X $ of the form $R(x, \, y) = (h(y), \, x\ast y)$. Using the second compatibility condition of \equref{25} yields $x \ast y = h (x \ast z)$, for all $x$, $y$, $z\in X$. Thus, there exists a map $f: X \to X$ (fix $z_0 \in X$ and define $f(x) := h (x \ast z_0$), for all $x\in X$) such that $x\ast y = f(x)$, for all $x$, $y\in X$.  Now, we can see that \equref{25} and \equref{26} hold if and only if $h\circ f = f = f\circ h$. Thus, we have proved that $R = R_{(h, \, f)}$ as defined in \equref{exfsih}. Moreover, $R_{(h, \, f)}$ is left non-degenerate if and only if $h = {\rm Id}_X$ and $R_{(h, \, f)}$ is right non-degenerate if and only if $f: X \to X$ is bijective.
  \end{enumerate}
\end{examples}

Using \prref{detFS} we can classify all non-degenerate solutions on $X$.

\begin{theorem}\thlabel{th:nedegenerate}
  Let $X$ be a set and $R: X\times X \to X\times X$ a map. Then:
  \begin{enumerate}[(1)]
  \item $R$ is a left-nondegenerate solution on $X$ if and only if there exists a map $f: X \to X$ such that for any $x$, $y\in X$:
    \begin{equation} \eqlabel{nedegf}
      R (x, \, y) = R_f (x, \, y) = \bigl( y, \, f(x) \bigl).
    \end{equation}

  \item $R$ is a right-nondegenerate solution on $X$ if and only if there exists a bijective map $f: X \to X$ such that $R =  R_f$.

  \item\label{item:autrfautf} Two solutions $(X, \, R_f)$ and $(X, \, R_g)$ as above are isomorphic if and only if there exists $\sigma \in \Sigma_X$ such that $g = \sigma \circ f \circ \sigma^{-1}$. Furthermore, ${\rm Aut} \, (X, \, R_f) = \Aut(f)$, the function centralizer group.
  \end{enumerate}

   In particular, if $X$ is a finite set with $|X| = n$, then the number of isomorphism classes of all left (resp. right) non-degenerate solutions on $X$ is equal to $d(n)$, the Davis number of $n$ (resp. $p(n)$, the Euler partition number).
\end{theorem}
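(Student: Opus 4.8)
The plan is to reduce everything to \prref{detFS}, which says that $R(x,y)=(x\cdot y,\,x\ast y)$ is a solution exactly when $(X,\cdot)$ is a Kimura semigroup and the compatibilities \equref{25}--\equref{26} hold; non-degeneracy will then rigidify both the multiplication $\cdot$ and the operation $\ast$. For part (1), suppose first that $R$ is a left-nondegenerate solution. Reading the part $x\cdot(y\cdot z)=x\cdot z$ of the Kimura identity \equref{24} as an equality of self-maps of $X$ gives $l_x\circ l_y=l_x$ for all $x,y$; cancelling the bijection $l_x$ on the left forces $l_y={\rm Id}_X$, i.e. $x\cdot y=y$, so $(X,\cdot)$ is the right-zero semigroup. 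Substituting $y\cdot z=z$ into \equref{25} and comparing its first and last terms yields $x\ast z=x\ast y$, so $\ast$ is independent of its second argument; setting $f(x):=x\ast y$ gives $R=R_f$ of the form \equref{nedegf} (and \equref{26} becomes automatic). Conversely, $R_f=R_{({\rm Id}_X,\,f)}$ is a solution for every $f$ by \exref{exemFS} (\ref{item:fh}), and it is left-nondegenerate because $l_x={\rm Id}_X$.

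For part (2), let $R$ be right-nondegenerate, so each map $r_x\colon z\mapsto z\ast x$ is a bijection. The equality $y\ast(x\ast z)=y\ast z$ extracted from \equref{26} can be read with $x$ varying: since $x\mapsto x\ast z$ is onto, $x\ast z$ ranges over all of $X$, forcing $y\ast w$ to be independent of $w$. Writing $x\ast y=f(x)$, right-nondegeneracy is then precisely the statement that $f$ is a bijection. The middle term of \equref{25} now reads $y\cdot f(x)=f(x)$, and surjectivity of $f$ promotes this to $y\cdot w=w$, so $(X,\cdot)$ is again right-zero and $R=R_f$ with $f$ bijective. The converse reverses this computation, again invoking \exref{exemFS} (\ref{item:fh}) and noting that $r_x=f$ is a bijection exactly when $f$ is.

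For part (3) the plan is to unwind the morphism condition $(\sigma\times\sigma)\circ R_f=R_g\circ(\sigma\times\sigma)$, which evaluates to $(\sigma(y),\,\sigma f(x))=(\sigma(y),\,g\sigma(x))$, i.e. $\sigma\circ f=g\circ\sigma$; for an isomorphism $\sigma\in\Sigma_X$ this is conjugacy $g=\sigma f\sigma^{-1}$, and setting $f=g$ identifies $\Aut(X,R_f)$ with the centralizer $\Aut(f)$. The enumerative statement is then purely definitional: isomorphism classes of left-nondegenerate solutions biject with conjugacy classes of arbitrary self-maps $f\colon X\to X$, counted by the Davis number $d(n)$ (\reref{davids}), while the right-nondegenerate ones correspond to conjugacy classes of \emph{permutations}, i.e. partitions of $n$, counted by $p(n)$. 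The only place demanding genuine care is the extraction of the right-zero structure from non-degeneracy in the two forward directions; once the multiplication is pinned down as $x\cdot y=y$, the operation $\ast$ has essentially no freedom left and the remainder is bookkeeping, so I anticipate no real obstacle beyond keeping the roles of the two arguments of $\ast$ straight.
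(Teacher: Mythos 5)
Your proposal is correct and follows essentially the same route as the paper: reduce to \prref{detFS}, use non-degeneracy to force the right-zero semigroup structure and the independence of $\ast$ on its second argument, then read off the classification and automorphism groups from conjugacy of self-maps. The only cosmetic difference is in part (2), where the paper extracts $x\cdot y=y$ from injectivity of $r_z$ applied to the first equality of \equref{26}, while you use surjectivity of $r_z$ on the second equality first and then recover the right-zero law from \equref{25}; both are valid and equivalent in substance.
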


\begin{proof}
  \begin{enumerate}[(1)]
  \item Assume first that $R$ is a left-nondegenerate solution on $X$, i.e. for any $x\in X$ the map $l_x : X \to X$, $l_x (y) = x\cdot y$, for all $y\in X$ is bijective. In particular, $l_x$ is injective and hence $(X, \cdot)$ is a left cancellative Kimura semigroup. It follows from \equref{24} that $y\cdot z = z$, for all $y$, $z\in X$, that is $(X, \cdot)$ is the right zero semigroup. By applying (\ref{item:7}) of \exref{lrzero} to $h := {\rm Id}_X$ we obtain that there exists a map $f: X \to X$, such that $x\ast y = f(x)$ and hence $R (x, \, y) = (y, \, f(x))$, for all $x$, $y\in X$, as desired. Conversely, the map $R_f$ is obviously left-nondegenerate as $l_x = {\rm Id}_X$, for all $x\in X$.

  \item Assume now that $R$ is a right-nondegenerate solution on $X$, i.e. for any $y\in X$ the map $r_y : X \to X$, $r_y (x) = x \ast y$ for all $x\in X$, is bijective. In particular, $r_y$ is injective and thus using \equref{26} we obtain that $x \cdot y = y$, for all $x$, $y\in X$, that is $(X, \cdot)$ is the right zero semigroup. By applying (\ref{item:7}) of \exref{lrzero} to $h := {\rm Id}_X$ we obtain a map $f: X \to X$, such that $R = R_f$. Finally, $R_f$ is a right-nondegenerate map if and only $f$ is bijective since $r_y = f$, for all $y\in X$.

  \item Let $f$, $g: X\to X$ be two maps. Then $\sigma : (X, \, R_f) \to (X, \, R_f)$ is an isomorphism in $\mathcal{FSE}$ if and only if $\sigma : (X, \cdot) \to (X, \cdot')$ is an isomorphism of Kimura semigroups and $\sigma (x \ast y) = \sigma (x) \ast' \sigma (y)$, for all $x$, $y\in X$. Since the Kimura semigroups $(X, \cdot)$ and $(X, \cdot')$ are both right zero semigroups the first condition reduces to $\sigma$ being bijective. On the other hand, the compatibility $\sigma (x \ast y) = \sigma (x) \ast' \sigma (y)$ is equivalent to having $\sigma \circ f = g \circ \sigma$, that is, $f$ and $g$ are conjugate maps. The last statements now follows. \qedhere
  \end{enumerate}
\end{proof}

The next result describes and classifies a very large class of solutions on $X$ that includes surjective (in particular, bijective), of finite order or unitary solutions.

\begin{theorem}\thlabel{th:main}
  Let $R: X\times X \to X\times X$ be a solution on a set $X$ such that $\pi_1 \circ R : X\times X \to X$ is surjective. Then:

  \begin{enumerate}[(1)]
  \item\label{item:10} There exist two non-empty sets $A$ and $B$, a map $\omega: B\to B$ and an isomorphism of solutions $(X, \, R) \cong (A \times B, \, R_{\omega})$, where
    $R_{\omega}: (A\times B)^2 \to (A\times B)^2$ is the solution on $A\times B$ given for any $a_1$, $a_2 \in A$ and $b_1$, $b_2 \in B$ by:
    \begin{equation}\eqlabel{main1}
      R_{\omega} \bigl( (a_1, \, b_1), \, (a_2, \, b_2) \bigl) = \bigl( (a_1, \, b_2), \, (a_2, \, \omega(b_1) ) \bigl).
    \end{equation}

  \item\label{item:11} Let $(A \times B, \, R_{\omega})$ and $(C \times D, \, R_{\omega'})$ be two solutions of the form \equref{main1}. Then there exists a bijection between the set of all
    isomorphisms $(A \times B, \, R_{\omega}) \cong (C \times D, \, R_{\omega'})$ in the category $\mathcal{FSE}$ and the set of all pairs $(\sigma_1, \, \sigma_2)$
    consisting of two bijections $\sigma_1 : A \to C$ and $\sigma_2: B \to D$ such that $\omega' = \sigma_2 \circ \omega \circ \sigma_2 ^{-1}$.

  \item\label{item:12} There exists an isomorphism of groups:
    \begin{equation*}
    {\rm Aut} \, (X, \, R) \, \cong \, {\rm Aut} (A \times B, \, R_{\omega}) \, \cong \, \Sigma_A \times \Aut(\omega).
    \end{equation*}
  \end{enumerate}
\end{theorem}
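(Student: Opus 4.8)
The plan is to reduce everything to the structural results already in place. By \thref{refqm}, the solution $R$ equals $R_{(\cdot,\,\vartheta)}$ for a Kimura semigroup structure $(X,\cdot)$ and a quasi-endomorphism $\vartheta$ of it, and the map $\pi_1\circ R$ is nothing but the multiplication $(x,y)\mapsto x\cdot y$. Hence the hypothesis that $\pi_1\circ R$ is surjective says precisely that $(X,\cdot)$ is a \emph{total} Kimura semigroup. The equivalence $(3)\Leftrightarrow(4)$ of \thref{structura} then identifies $(X,\cdot)$ with the rectangular band $A\times B$ of two non-empty sets $A$ and $B$, and fixing such an identification $X\cong A\times B$ turns part (1) into a direct evaluation of $R_{(\cdot,\,\vartheta)}$ on a rectangular band.

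For part (1) I would first note that a total Kimura semigroup is a band, so every element satisfies $x^2=x$; thus the idempotent map $h$ of \thref{th:refqmbis} is the identity and $\im~h=X\cong A\times B$. Reading the quasi-endomorphism identity \equref{qmo} on the rectangular band, where $z\cdot w=(\pi_1 z,\pi_2 w)$ and $x\cdot y=(\pi_1 x,\pi_2 y)$, it becomes $\pi_2\vartheta(\pi_1 x,\pi_2 y)=\pi_2\vartheta(\pi_1 y,\pi_2 y)$, which forces $\pi_2\vartheta$ to be independent of the $A$-coordinate. Consequently $\vartheta$ descends to a function $\omega:B\to B$ determined by $\omega(\pi_2 x)=\pi_2\vartheta(x)$. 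Substituting $x\cdot y=(\pi_1 x,\pi_2 y)$ and $y\cdot\vartheta(x)=(\pi_1 y,\omega(\pi_2 x))$ into \equref{toatesol} produces exactly the formula \equref{main1} for $R_\omega$; this is precisely the specialization of \thref{th:refqmbis} to the case $h=\id$, with $\omega=\vartheta'$.

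Parts (2) and (3) are then bookkeeping with \leref{morfrecta}. A morphism $\sigma$ in $\mathcal{FSE}$ between two solutions of the form \equref{main1} is a map preserving both components of $R$; preservation of the first component makes $\sigma$ a morphism of the underlying rectangular bands, so by \leref{morfrecta} it is of the form $\sigma=\sigma_1\times\sigma_2$ with $\sigma_1:A\to C$ and $\sigma_2:B\to D$ (bijections in the isomorphism case). Imposing preservation of the second component, $\sigma(x\ast y)=\sigma(x)\ast'\sigma(y)$, and expanding both sides via \equref{main1}, the first coordinates match automatically and the second coordinates collapse to the single condition $\sigma_2\circ\omega=\omega'\circ\sigma_2$, i.e.\ $\omega'=\sigma_2\circ\omega\circ\sigma_2^{-1}$; this gives the bijection of part (2). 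Specializing to $(C\times D,R_{\omega'})=(A\times B,R_\omega)$ and noting that composition acts componentwise, $(\sigma_1\times\sigma_2)\circ(\tau_1\times\tau_2)=(\sigma_1\tau_1)\times(\sigma_2\tau_2)$, identifies $\Aut(X,R)$ with the direct product of $\Sigma_A$ (the unconstrained $\sigma_1$) and the centralizer $\Aut(\omega)$ (the $\sigma_2$ commuting with $\omega$), yielding part (3).

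None of the individual computations is delicate once \thref{structura}, \thref{th:refqmbis} and \leref{morfrecta} are available. The steps deserving the most care are the descent of $\vartheta$ to $\omega:B\to B$ — where one must check that the quasi-endomorphism identity is exactly what is needed for $\pi_2\vartheta$ to be independent of the $A$-coordinate and hence well defined on $B$ — and the final verification that the componentwise composition law genuinely realizes $\Aut(X,R)$ as a \emph{direct product} $\Sigma_A\times\Aut(\omega)$, rather than merely a set-theoretic bijection of the underlying sets.
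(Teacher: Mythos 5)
Your proposal is correct, and parts (2) and (3) proceed exactly as in the paper: reduce isomorphisms to morphisms of rectangular bands via \leref{morfrecta}, extract the condition $\omega'=\sigma_2\circ\omega\circ\sigma_2^{-1}$ from preservation of the second component, and specialize to obtain $\Sigma_A\times\Aut(\omega)$. The only genuine divergence is in part (1). After invoking \thref{structura} to identify the total Kimura semigroup with a rectangular band $A\times B$ (as you both do), the paper works from \prref{detFS}: it writes the second component of $R$ as an arbitrary binary operation $\ast$ with coordinate functions $f$ and $g$ of four variables, and grinds through conditions \equref{25} and \equref{26} to show that $f(a_1,b_1,a_2,b_2)=a_2$ and that $g$ depends only on $b_1$. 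You instead pass through \thref{refqm} and the descent mechanism of \thref{th:refqmbis}: since a total Kimura semigroup is a band, the idempotent $h$ there is the identity, and the quasi-endomorphism identity \equref{qmo} forces $\pi_2\vartheta$ to be independent of the $A$-coordinate, so $\vartheta$ descends to $\omega=\vartheta'$ on $B$ and \equref{toatesol} becomes \equref{main1}. Your route buys economy --- the coordinate computation is performed once, inside \thref{th:refqmbis}, rather than repeated --- at the cost of leaning on that theorem; the paper's computation is longer but self-contained. Both are sound, and the paper itself notes in \reref{re:exrevisited} that the solutions of the form \equref{main1} are precisely the specialization of \thref{th:refqmbis} you describe. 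Your closing cautions (well-definedness of the descent to $\omega$, and that componentwise composition really yields a direct product rather than a mere bijection) are exactly the right points to flag, and both check out.
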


\begin{proof}
  \begin{enumerate}[(1)]
  \item Let $R$ be a solution on $X$ such that $\pi_1 \circ R : X\times X \to X$ is surjective. Thus, $(X, \cdot)$ is a Kimura semigroup and since $\pi_1 \circ R : X\times X \to X$ is surjective, this semigroup is total, i.e. $X = X^2 = \{ x\cdot y \, | \, x, y \in X \}$. Using \thref{structura} we obtain that there exist two non-empty sets $A$ and $B$ and an isomorphism of semigroups $(X, \cdot) \cong A \times B$, where $A\times B$ is the rectangular band of the sets $A$ and $B$ as defined in \equref{recband}.  Without loss of generality we can assume that the Kimura semigroup $(X, \cdot)$ is equal to the rectangular band $A \times B$ and we will describe all solutions on it. Let $R: A\times B \times A \times B \to A\times B \times A \times B$ be a solution on $A\times B$. Since the multiplication $\cdot$ on the rectangular band $A \times B$ is given by \equref{recband} the map $R$ takes the form
    \begin{equation} \eqlabel{proviz}
      R \bigl( (a_1, \, b_1), \, (a_2, \, b_2) \bigl) = \bigl( (a_1, \, b_2), \, (a_1, \, b_1) \ast (a_2, \, b_2) \bigl)
    \end{equation}
    for all $a_1$, $a_2\in A$ and $b_1$, $b_2\in B$, where $\ast : A\times B \times A \times B \to A\times B$ is a binary operation on $A\times B$. Thus, we can find two maps $f: A\times B \times A \times B \to A$ and $g: A\times B \times A \times B \to B$ ($f = \pi_1 \circ \ast$, $ g = \pi_2 \circ \ast)$ such that for any $a_1$, $a_2 \in A$ and $b_1$, $b_2 \in B$ we have:
    \begin{equation*}
    (a_1, \, b_1) \ast (a_2, \, b_2) = \bigl( f(a_1, \, b_1, \, a_2, \, b_2), \, g(a_1, \, b_1, \, a_2, \, b_2) \bigl).
    \end{equation*}
    Next we have to check when such a map satisfies the compatibility conditions \equref{25} and \equref{26}. By considering $x = (a_1, \, b_1)$, $y = (a_2, \, b_2)$ and $z = (a_3, \, b_3)$ an immediate calculation shows that \equref{25} holds if and only if
    \begin{eqnarray*}
      && f(a_1, \, b_1, \, a_2, \, b_2) = a_2 \\
      && g(a_1, \, b_1, \, a_2, \, b_2) = g(a_1, \, b_1, \, a_3, \, b_3)
    \end{eqnarray*}
    for all $a_1$, $a_2$, $a_3 \in A$ and $b_1$, $b_2$, $b_3 \in B$. Hence, $g$ does not depend on the variables in positions $3$ and $4$. Thus, there exists a map $h: A\times B \to B$ ($h (a_1, \, b_1) := g (a_1, \, b_1, \, a_0, \, b_0)$, for some fixed elements $a_0 \in A$ and $b_0 \in B$) such that the binary operation $\ast$ is given by
    \begin{equation*}
    (a_1, \, b_1) \ast (a_2, \, b_2) = \bigl( a_2, \, h(a_1, \, b_1) \bigl)
    \end{equation*}
    for all $a_1$, $a_2\in A$ and $b_1$, $b_2\in B$. It remains to check the compatibility condition \equref{26} for such a map. Again, by taking $x = (a_1, \, b_1)$, $y = (a_2, \, b_2)$ and $z = (a_3, \, b_3)$ we can see that \equref{26} holds if and only if $h (a_1, \, b_2) = h (a_2, \, b_2)$, for all $a_1$, $a_2\in A$ and $b_2\in B$. Thus,
    $h$ does not depend on the variable in the first position, i.e. there exists a map $\omega: B\to B$ ($\omega (b) := h (a_0, \, b)$, for a fixed $a_0 \in A$) such that
    the binary operation $\ast$ is given by
    \begin{equation*}
    (a_1, \, b_1) \ast (a_2, \, b_2) = \bigl( a_2, \, \omega (b_1) \bigl)
    \end{equation*}
    for all $a_1$, $a_2\in A$ and $b_1$, $b_2\in B$. This leads us to the defining formula of $R_{\omega}$ as given in
    \equref{main1} which concludes the first part of the theorem.

  \item Let $(A \times B, \, R_{\omega})$ and $(C \times D, \, R_{\omega'})$  be two solutions of the form \equref{main1} for some maps $\omega: B\to B$ and
    $\omega' : D\to D$. Then $(A \times B, \, R_{\omega}) \cong (C \times D, \, R_{\omega'})$ in the category $\mathcal{FSE}$ if and only if there exists an
    isomorphism of Kimura semigroups $\psi : (A\times B, \cdot) \to (C\times D, \cdot')$ (i.e. of rectangular bands) such that
    \begin{equation} \eqlabel{clasprof}
      \psi (x \ast y) = \psi(x) \ast' \psi(y)
    \end{equation}
    for all $x$, $y\in A\times B$. Now, by applying \leref{morfrecta}, we obtain two unique bijective maps $\sigma_1: A\to C$, $\sigma_2: B\to D$ such that
    $\psi (a, \, b) = \bigl(\sigma_1 (a), \, \sigma_2(b) \bigl)$, for all $a \in A$ and $b\in B$. Furthermore, the correspondence $\psi \leftrightarrow (\sigma_1, \, \sigma_2)$
    is bijective. We are only left to check the compatibility condition \equref{clasprof}. By considering
    $x = (a, \, b)$ and $y = (a', \, b')$ we obtain easily that $\psi (x \ast y) = \bigl( \sigma_1 (a'), \, \sigma_2 (\omega(b)) \bigl)$ and
    $\psi(x) \ast' \psi(y) = \bigl( \sigma_1 (a'), \, \omega' (\sigma_2(b)) \bigl)$. Thus, \equref{clasprof} holds if and only if
    $\sigma_2 \circ \omega = \omega' \circ \sigma_2$, as needed.

  \item It follows directly from (2) applied for $(C \times D, \, R_{\omega'}) := (A \times B, \, R_{\omega})$. \qedhere
  \end{enumerate}
\end{proof}

\begin{remarks}\relabel{singel}
  \;
  \begin{enumerate}[(1)]

  \item Let $R: X\times X \to X\times X$ be a solution on a set $X$ such that $\pi_1 \circ R : X\times X \to X$ is surjective. If $B = \{\star\}$ is the singleton set in the decomposition $(X, \, R) \cong (A \times B, \, R_{\omega})$ given by \thref{th:main}, then $(X, \, R) \cong (X, \, {\rm Id}_{X\times X})$. On the other hand, if $A = \{\star\}$ is the singleton set, then $(X, \, R) \cong (X, \, R_f)$, for some map $f: X \to X$, where $R_f$ is the left non-degenerate solution given by \equref{nedegf}. Furthermore, if $X$ is a set with at least two elements, then it can be easily seen that  $(X, \, {\rm Id}_{X\times X})$ and $(X, \, R_f)$ are not isomorphic solutions in the category $\mathcal{FSE}$.

  \item\label{degenerate_solutions} \thref{th:nedegenerate} shows that if $R$ is a right non-degenerate solution on a set $X$, then $R$ is also left-nondegenerate and, moreover, $R$ is bijective. Now, for a set $A \neq \{\star\}$ and a permutation $\omega: B\to B$ on $B$, the map $R_{\omega}: (A\times B)^2 \to (A\times B)^2$ on $A\times B$ given by \equref{main1} is an example of a bijective solution which is left and right degenerate since $l_{(a_1, \, b_1)} \, (a_2, \, b_2) = (a_1, \, b_2)$,
    and $r_{(a_2, \, b_2)} \, (a_1, \, b_1) = (a_2, \, \omega (b_1))$, for all $a_1$, $a_2 \in A$ and $b_1$, $b_2 \in B$.
  \end{enumerate}
\end{remarks}

As a special case of \thref{th:main} (\ref{item:11}) we obtain that two solutions $(A \times B, \, R_{\omega})$ and $(A \times B, \, R_{\omega'})$ of the form \equref{main1} are isomorphic if and only if the corresponding maps $\omega$, $\omega' : B\to B$ are conjugate. Thus, for two fixed sets $A$ and $B$, the number of isomorphism classes of all solutions of the form \equref{main1} is $d(B)$, the David number of $B$.

For a positive integer $n$ let us denote by ${\rm fs}^1 \, (n)$ (resp. ${\rm fs}_{\rm b} \, (n)$) the number of isomorphism classes of all solutions $R$ on finite set with $n$ elements such that $\pi_1 \circ R $ is surjective (resp. $R$ is bijective). Applying \thref{th:main} for finite sets yields:

\begin{corollary} \colabel{cazfinit}
Let $n$ be a positive integer. Then
\begin{eqnarray}
{\rm fs}^1 \, (n) = \sum_{m|n} \, d(m), \qquad {\rm fs}_{\rm b} \, (n) = \sum_{m|n} \, p(m) \eqlabel{formdn0}
\end{eqnarray}
where $d(m)$ is the Davis number of $m$ and $p(m)$ is the number of all partitions of $m$.
\end{corollary}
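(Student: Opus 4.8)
The plan is to reduce both counts to the classification in \thref{th:main} and then carry out an elementary sum over divisors. All the real work is done by the bijection recorded immediately before the statement: for fixed finite sets $A$, $B$, the isomorphism classes of solutions of the form $(A\times B,\, R_\omega)$ are in bijection with the $\Sigma_B$-conjugacy classes of self-maps $\omega:B\to B$, which number $d(B)$ in general and $p(B)$ once $\omega$ is constrained to be a permutation. The two formulas differ only in whether all self-maps or only permutations of $B$ are allowed.

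For ${\rm fs}^1 \, (n)$ I would argue as follows. By \thref{th:main}(1), every solution $R$ on an $n$-element set $X$ with $\pi_1\circ R$ surjective is isomorphic to some $(A\times B,\, R_\omega)$ with $|A|\cdot|B|=n$, and by \thref{th:main}(2) its isomorphism class is a complete invariant of the pair $(|A|,|B|)$ together with the conjugacy class of $\omega$; in particular $|A|$ and $|B|$ are honest invariants recovered from the solution. Writing $b=|B|$, so that $b$ ranges over the divisors of $n$ and $|A|=n/b$ is then forced, the number of classes with $|B|=b$ is exactly the number $d(b)$ of conjugacy classes of self-maps of a $b$-element set. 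Summing gives ${\rm fs}^1 \, (n)=\sum_{b\mid n} d(b)=\sum_{m\mid n} d(m)$.

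For ${\rm fs}_{\rm b} \, (n)$ the one genuinely new ingredient, which I expect to be the main obstacle, is to show that a bijective solution automatically has $\pi_1\circ R$ surjective (so that the above classification applies) and that under it bijectivity corresponds precisely to $\omega$ being a permutation. For the first point I would invoke the identity $R^2(x,\, y)=\bigl(x\cdot\vartheta(x),\, y\cdot\vartheta(y)\bigr)$ established in the discussion following \coref{invo}: if $R$ is bijective then so is $R^2=g\times g$ for $g(x):=x\cdot\vartheta(x)$, whence $g$ is surjective; but $g(x)\in X\cdot X=\im(\pi_1\circ R)$ for every $x$, so surjectivity of $g$ forces $X\cdot X=X$, i.e. $\pi_1\circ R$ is surjective. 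For the second point, direct inspection of \equref{main1} shows that $R_\omega$ is injective (resp. surjective) exactly when $\omega$ is, so $R_\omega$ is bijective iff $\omega$ is a permutation; and bijectivity is visibly preserved by the isomorphisms of \thref{th:main}(2), which are conjugations by maps of the form $\sigma\times\sigma$.

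With these two facts in hand the count proceeds exactly as before. A bijective solution on $X$ is isomorphic to some $(A\times B,\, R_\omega)$ with $|A|\cdot|B|=n$ and $\omega$ a permutation of $B$, the class being determined by $|A|$, $|B|$ and the conjugacy class of $\omega$. Fixing $b=|B|\mid n$, the admissible $\omega$ fall into $p(b)$ conjugacy classes, since conjugacy classes of permutations of a $b$-element set are indexed by partitions of $b$ (as recorded in \seref{prel}); summing over divisors yields ${\rm fs}_{\rm b} \, (n)=\sum_{b\mid n} p(b)=\sum_{m\mid n} p(m)$. The only care needed throughout is to confirm that no class is counted twice, which is guaranteed because $(|A|,|B|)$ and the conjugacy type of $\omega$ are genuine isomorphism invariants by \thref{th:main}(2).
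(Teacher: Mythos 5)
Your proof is correct and follows essentially the same route the paper intends: the corollary is presented as an immediate consequence of \thref{th:main}, using the observation (recorded just before the statement) that for fixed $A$, $B$ the isomorphism classes of solutions $R_\omega$ correspond to conjugacy classes of $\omega:B\to B$, which number $d(b)$ for arbitrary self-maps and $p(b)$ for permutations, summed over the divisors $b=|B|$ of $n$. The one step you flag as "the main obstacle" — that a bijective solution has $\pi_1\circ R$ surjective — does not require the formula for $R^2$: a bijective $R$ is in particular surjective, and $\pi_1\circ R$ is then a composite of surjections, hence surjective.
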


By applying \thref{th:main} and \coref{cazfinit} we obtain:

\begin{corollary} \colabel{cazprim}
Let $q$ be a prime number. Then  ${\rm fs}^1 \, (q) = 1 + d(q)$ and ${\rm fs}_{\rm b} \, (q) = 1 + p(q)$. Furthermore, for a finite set $X$ with $|X| = q$,
any solution $R$ on $X$ with $\pi_1 \circ R : X\times X \to X$ surjective, is either ${\rm Id}_{X\times X}$ or  isomorphic to a solution of the form $R_f$ given by \equref{nedegf}, for some map $f : X \to X$.
\end{corollary}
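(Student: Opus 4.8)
The plan is to derive both assertions directly from \coref{cazfinit}, \thref{th:main}, and \reref{singel}, specializing to the prime case. For the two counting formulas, I would start from \coref{cazfinit} applied to $n = q$. Since $q$ is prime, its only divisors are $1$ and $q$, so each of the sums collapses to exactly two terms:
\begin{equation*}
  {\rm fs}^1 \, (q) = d(1) + d(q), \qquad {\rm fs}_{\rm b} \, (q) = p(1) + p(q).
\end{equation*}
It then remains to evaluate the base cases $d(1) = p(1) = 1$: on a singleton set there is exactly one self-map, hence a single conjugacy class, giving $d(1) = 1$, and there is exactly one partition of the integer $1$, giving $p(1) = 1$ (these values are already recorded in \reref{davids}). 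Substituting yields ${\rm fs}^1(q) = 1 + d(q)$ and ${\rm fs}_{\rm b}(q) = 1 + p(q)$, as claimed.

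For the structural statement, I would take an arbitrary solution $R$ on $X$ with $\pi_1 \circ R$ surjective and invoke \thref{th:main}\,(\ref{item:10}) to obtain an isomorphism $(X, \, R) \cong (A \times B, \, R_{\omega})$ for non-empty sets $A$, $B$ and a map $\omega : B \to B$. Comparing cardinalities gives $|A| \cdot |B| = |X| = q$, and because $q$ is prime exactly one of the two factors must be a singleton.

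The two resulting cases are then dispatched by \reref{singel}\,(1): if $B = \{\star\}$ then $(X, \, R) \cong (X, \, {\rm Id}_{X\times X})$, while if $A = \{\star\}$ then $(X, \, R) \cong (X, \, R_f)$ for some $f : X \to X$, with $R_f$ as in \equref{nedegf}. Since these two possibilities are exhaustive, the final assertion follows. I do not anticipate any genuine obstacle here; the only points requiring care are the evaluation of the base cases $d(1) = p(1) = 1$ and the observation that primality of $q$ forces one of the two rectangular-band factors produced by \thref{th:main} to collapse to a singleton, after which the already-established \reref{singel} completes the argument.
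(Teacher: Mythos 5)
Your proposal is correct and follows exactly the route the paper intends: the paper derives this corollary by citing \coref{cazfinit} (whose divisor sums collapse to $d(1)+d(q)$ and $p(1)+p(q)$ for prime $q$) together with \thref{th:main} and the case analysis of \reref{singel}, which is precisely what you do. The only details you add — evaluating $d(1)=p(1)=1$ and noting that primality forces one rectangular-band factor to be a singleton — are the same implicit steps the paper leaves to the reader.
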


Using \coref{cazfinit} and the M\H{o}bius inversion formula we obtain two new formulas for $d(n)$ and $p(n)$:

\begin{corollary} \colabel{nouaminune}
Let $n$ be a positive integer. Then:
\begin{eqnarray}
d(n) = \sum_{m|n} \, \mu(m) \, {\rm fs}^1 \, (\frac{n}{m}), \qquad  p(n) =  \sum_{m|n} \, \mu(m) \, {\rm fs}_{\rm b} \, (\frac{n}{m})  \eqlabel{formdn}
\end{eqnarray}
where $\mu : \NN^* \to \{-1, \, 0, \, 1\}$ is the M\H{o}bius function.
\end{corollary}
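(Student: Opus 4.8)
The plan is to recognize both identities as instances of the classical Möbius inversion formula applied to the summation formulas already established in \coref{cazfinit}. Recall that \coref{cazfinit} gives
\begin{equation*}
{\rm fs}^1 \, (n) = \sum_{m|n} \, d(m), \qquad {\rm fs}_{\rm b} \, (n) = \sum_{m|n} \, p(m),
\end{equation*}
so that each of ${\rm fs}^1$ and ${\rm fs}_{\rm b}$ is the \emph{summatory function} (the Dirichlet convolution with the constant arithmetic function $1$) of $d$ and of $p$ respectively. Both $d$ and $p$ are defined on the positive integers, which is precisely the setting in which the inversion principle applies.

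The key step is to invoke Möbius inversion in its standard form: if $g(n) = \sum_{m|n} f(m)$ holds for all positive integers $n$, then $f(n) = \sum_{m|n} \mu(m) \, g(n/m)$. First I would apply this with $g = {\rm fs}^1$ and $f = d$ to recover the first identity, and then with $g = {\rm fs}_{\rm b}$ and $f = p$ to recover the second; in each case the required hypothesis is exactly the content of \coref{cazfinit}, so the stated conclusion is immediate.

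There is no genuine obstacle here, since the result is a purely number-theoretic consequence of \coref{cazfinit} that relies only on the well-known inversion principle. The one point deserving a moment's care is the bookkeeping of the nested divisor sums: one expands $\sum_{m|n}\mu(m)\,g(n/m)=\sum_{m|n}\mu(m)\sum_{k\mid(n/m)}f(k)$ and reorganizes the double sum as $\sum_{k|n}f(k)\sum_{m\mid(n/k)}\mu(m)$, whereupon the inner sum collapses to $[\,k=n\,]$ by the defining identity $\sum_{m|N}\mu(m)=[\,N=1\,]$ of the Möbius function, leaving exactly $f(n)$ on the right-hand side.
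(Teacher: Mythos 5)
Your proposal is correct and matches the paper's argument exactly: the paper derives \coref{nouaminune} by applying the classical M\"obius inversion formula to the divisor-sum identities of \coref{cazfinit}, which is precisely what you do. The extra care you take in unwinding the nested divisor sums is a standard verification of the inversion principle and is consistent with (indeed slightly more detailed than) the paper's one-line justification.
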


For unitary solutions on $X$, i.e. those $R : X \times X \to X\times X$ with $R^{21} R = {\rm Id}_{X\times X}$ (where $R^{21} = \tau_X R \tau_X$), we have:

\begin{corollary}\colabel{cor:unitary} Let $X$ be a set. Then:
  \begin{enumerate}[(1)]
  \item\label{item:8} The unitary solutions on $X$ are precisely those isomorphic to one of the form \equref{main1} for some decomposition $X\cong A\times B$ and involution $\omega:B\to B$.

  \item\label{item:9} If $X$ is a finite set with $|X| = n$, then the number of isomorphism classes of unitary solutions on $X$ is equal to
    \begin{equation*}
      \sum_{d|n}\left(1+\left\lfloor \frac d2\right\rfloor\right)
      =
      \tau(n) + \sum_{d|n}\left\lfloor \frac d2\right\rfloor.
    \end{equation*}
  \end{enumerate}
\end{corollary}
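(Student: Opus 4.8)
The plan is to recognize unitary solutions as a special case covered by \thref{th:main} and then isolate the single extra constraint that unitarity imposes. First I would unwind the condition $R^{21}R=\mathrm{Id}_{X\times X}$ for an arbitrary solution $R(x,y)=(x\cdot y,\,x\ast y)$. Composing the four maps directly gives
\[
R^{21}R(x,y)=\tau_X R\tau_X R(x,y)=\bigl((x\ast y)\ast(x\cdot y),\ (x\ast y)\cdot(x\cdot y)\bigr),
\]
so unitarity forces in particular the second-coordinate identity $(x\ast y)\cdot(x\cdot y)=y$ for all $x,y\in X$. Reading its right-hand side as $\pi_1\circ R$ evaluated on the pair $(x\ast y,\,x\cdot y)$ shows that every $y\in X$ lies in the image of $\pi_1\circ R$; hence $\pi_1\circ R$ is surjective and \thref{th:main} applies, producing an isomorphism of solutions $(X,R)\cong(A\times B,\,R_\omega)$ for some $\omega:B\to B$.

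Since being unitary is invariant under isomorphism in $\mathcal{FSE}$, it then suffices to decide when $R_\omega$ itself is unitary. Running the same composition on $A\times B$ I expect
\[
R_\omega^{21}R_\omega\bigl((a_1,b_1),(a_2,b_2)\bigr)=\bigl((a_1,\omega^2(b_1)),\,(a_2,b_2)\bigr),
\]
which equals $\bigl((a_1,b_1),(a_2,b_2)\bigr)$ for all arguments precisely when $\omega^2=\mathrm{Id}_B$. This settles claim (\ref{item:8}): the unitary solutions on $X$ are exactly those isomorphic to some $R_\omega$ with $\omega$ an involution, the converse direction being the same computation read backwards.

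For the enumeration in (\ref{item:9}) I would invoke \thref{th:main}(\ref{item:11}): two solutions $R_\omega$ and $R_{\omega'}$ on $A\times B$ and $C\times D$ are isomorphic iff $|A|=|C|$, $|B|=|D|$, and $\omega,\omega'$ are conjugate. Consequently an isomorphism class of unitary solution on $X$ is determined by a factorization $n=|A|\cdot|B|$ together with a conjugacy class of involution on $B$, and this assignment is a bijection since $|A|,|B|$ and $[\omega]$ are complete invariants. Writing $d=|B|$, each divisor $d\mid n$ fixes $|A|=n/d$, and the conjugacy classes of involutions on a $d$-element set are indexed by their number $k$ of transpositions with $0\le k\le\lfloor d/2\rfloor$, so there are $1+\lfloor d/2\rfloor$ of them. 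Summing over $d\mid n$ yields $\sum_{d\mid n}\bigl(1+\lfloor d/2\rfloor\bigr)=\tau(n)+\sum_{d\mid n}\lfloor d/2\rfloor$.

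The only genuinely delicate point is the reduction in the first paragraph: one must verify that unitarity is strong enough to land $R$ inside the hypothesis $\pi_1\circ R$ surjective of \thref{th:main}, which hinges on correctly extracting the second-coordinate identity from $R^{21}R=\mathrm{Id}$. Everything downstream is routine: the identity $R_\omega^{21}R_\omega=\mathrm{Id}\iff\omega^2=\mathrm{Id}_B$ is a one-line substitution, and the final count is the standard enumeration of involutions up to conjugacy, already organized for us by the classification of isomorphisms in \thref{th:main}(\ref{item:11}).
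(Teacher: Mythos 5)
Your proof is correct and follows essentially the same route as the paper: reduce to \thref{th:main}, compute $R_{\omega}^{21}R_{\omega}\bigl((a_1,b_1),(a_2,b_2)\bigr)=\bigl((a_1,\omega^2(b_1)),(a_2,b_2)\bigr)$ to conclude that unitarity of $R_\omega$ is equivalent to $\omega^2=\mathrm{Id}_B$, and then count conjugacy classes of involutions on a $d$-element set as $1+\lfloor d/2\rfloor$ for each divisor $d$ of $n$. The only (harmless) difference is in how you justify that \thref{th:main} applies: you extract the second-coordinate identity $(x\ast y)\cdot(x\cdot y)=y$ from $R^{21}R=\mathrm{Id}$ to see directly that $\pi_1\circ R$ is surjective, whereas the paper simply observes that a unitary solution is bijective, which also yields surjectivity of $\pi_1\circ R$.
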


\begin{proof}
  We prove the two statements in the order in which they were made.
  \begin{enumerate}[(1)]
  \item A unitary solution will obviously be bijective, so \thref{th:main} applies; set $X = A\times B$ and let $\omega: B\to B$ be a map as in that theorem. A straightforward computation shows that
    \begin{equation*}
      (\tau R_{\omega} \tau ) R_{\omega} \bigl( (a_1, \, b_1), \, (a_2, \, b_2) \bigl) = \bigl( (a_1, \, \omega^2 (b_1) ), \, (a_2, \, b_2) \bigl)
    \end{equation*}
    for all $a_1$, $a_2 \in A$ and $b_1$, $b_2 \in B$. $R_{\omega}$, given by \equref{main1}, is thus unitary if and only if $\omega^2 = {\rm Id}_B$.

  \item By (\ref{item:8}), specifying a solution comes down to specifying a decomposition $X\cong A\times B$ and an involution on $B$. \thref{th:main} (\ref{item:11}) ensures that up to isomorphism, the decomposition in question amounts to specifying the size $d:=|B|$, which must be a divisor of $n=|X|$.

    As for the involution $\omega:B\to B$, it will be uniquely determined, up to isomorphism, by its number of fixed points: the other elements of $B$ then split into pairs
    \begin{equation*}
      \omega(x) = x'\ne x = \omega(x'),
    \end{equation*}
    and up to isomorphism, it does not matter how we effect this pairing. The number of isomorphism classes of involutions on $B$ is thus the number of non-negative integers $\le d:=|B|$ having the same parity as $d$, i.e. $1+\left\lfloor\frac d2\right\rfloor$. This gives the desired result. \qedhere
  \end{enumerate}
\end{proof}

\coref{invo} classifies all solutions of order $2$ in the group of permutations on $X\times X$. We can now take a step forward and describe all solutions of finite order $\geq 2$, for a given set $X$. To this end, we need to recall two numbers that are well studied in the combinatorics of permutation groups. For a positive integer $n$, the \emph{Landau number} $g(n)$ is defined as the maximum order of a permutation in $S_n$; it was introduced by Landau in \cite{land} where the first asymptotic formula was proved: $ {\rm log} \bigl( g(n) \bigl) \sim \sqrt{n \, {\rm log} (n)}$. We also recall that the number of elements of order $k$ in the symmetric group $S_n$ is denoted by $T (n, k)$, where $1 < k \leq g(n)$. These two numbers are well studied in the theory of permutation groups: for more details, including generating functions and recurrence relations, we refer to \cite{nicol, wilf} and their references.  Moreover, we denote by $\overline{T (n, k)}$, the number of all conjugation classes of permutations of order $k$ in the symmetric group $S_n$, for all $1 < k \leq g(n)$. Using \thref{th:main} and these numbers we classify all solutions of finite order:

\begin{corollary} \colabel{finitord}
Let $R$ be a solution of finite order $\geq 2$ on a set $X$ and $m$ a positive integer. Then the order ${\rm o}(R)$ of $R$ is even and ${\rm o}(R) = 2m$ if and only if  $(X, \, R) \cong (A \times B, \, R_{\omega})$, for a permutation $\omega: B \to B$ of order $m$ in $\Sigma_B$.

In particular, if $X$ is a finite set with $|X| = n$, then the maximum order of a solution on $X$ is $2\, g(n)$ and the number of isomorphism classes of
solutions of finite order $\geq 2$ on $X$ is equal to
\begin{equation*}
    \sum_{d|n} \, \Bigl(\sum_{k=1}^{g(d)} \, \overline{T (d, k)} \Bigl)
  \end{equation*}
where $g(d)$ is the Landau number of $d$.
\end{corollary}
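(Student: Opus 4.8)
The plan is to push everything through the normal form of \thref{th:main} and then compute the order of the resulting solution $R_\omega$ explicitly.

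First I would set up the reduction. Since $R$ has finite order it is a permutation of $X\times X$; in particular it is surjective, so $\pi_1\circ R$ is surjective and \thref{th:main} applies. This yields non-empty sets $A$, $B$, a map $\omega:B\to B$, and an isomorphism $(X,\,R)\cong(A\times B,\,R_\omega)$ with $R_\omega$ as in \equref{main1}. Because $R$ (hence $R_\omega$) is bijective, $\omega$ must be a bijection, and the finite order of $R$ forces $\omega\in\Sigma_B$ to have finite order. Finally, inspecting \equref{main1} one sees $R_\omega=\mathrm{Id}$ exactly when $|B|=1$ (the first output coordinate has $B$-component $b_2$, which can equal $b_1$ for all inputs only if $B$ is a singleton), so the hypothesis $\mathrm{o}(R)\ge 2$ forces $|B|\ge 2$.

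Next I would compute the powers of $R_\omega$, either directly by induction from \equref{main1} or by reading \equref{puterile} through the dictionary of \thref{refqm} (the quasi-endomorphism acts as $\omega$ on the $B$-factor). Either way,
\begin{equation*}
  R_\omega^{2k}\bigl((a_1,b_1),(a_2,b_2)\bigr)=\bigl((a_1,\omega^k(b_1)),(a_2,\omega^k(b_2))\bigr),
  \qquad
  R_\omega^{2k+1}\bigl((a_1,b_1),(a_2,b_2)\bigr)=\bigl((a_1,\omega^k(b_2)),(a_2,\omega^{k+1}(b_1))\bigr).
\end{equation*}
The even power is the identity iff $\omega^k=\mathrm{Id}_B$, i.e. iff $\mathrm{ord}(\omega)\mid k$; the odd power is \emph{never} the identity once $|B|\ge 2$, since its first output has $B$-component $\omega^k(b_2)$, independent of $b_1$. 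Hence $\mathrm{o}(R)=\mathrm{o}(R_\omega)$ is even and equals $2\,\mathrm{ord}(\omega)$, which is exactly the assertion $\mathrm{o}(R)=2m\iff\mathrm{ord}(\omega)=m$.

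For the enumeration, assume $|X|=n$. By \thref{th:main} (\ref{item:11}) the isomorphism class of $R_\omega$ is determined by the cardinal $d:=|B|$ (a divisor of $n$, since $n=|A|\,|B|$) together with the conjugacy class of $\omega$ in $S_d$. Combining this with the first part, solutions of order $2m$ correspond to a divisor $d\mid n$ and a conjugacy class of permutations of order $m$ in $S_d$, counted by $\overline{T(d,m)}$; summing over $d\mid n$ and over the admissible orders $1\le m\le g(d)$ gives the stated total. The maximal order follows from $\mathrm{o}(R)=2\,\mathrm{ord}(\omega)$: the largest order of a permutation in $S_d$ is the Landau number $g(d)$, and since any permutation of a $d$-set extends (fixing the remaining points) to one of the same order on an $n$-set, $g$ is non-decreasing, so $\max_{d\mid n}g(d)=g(n)$, attained at $d=n$ with $A$ a singleton; thus the maximal order is $2g(n)$.

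The main obstacle is the order computation — specifically, verifying that odd powers of $R_\omega$ can never be the identity when $|B|\ge 2$, which is what makes the order genuinely even and pins it to $2\,\mathrm{ord}(\omega)$ rather than a divisor thereof. The one remaining point requiring care is the degenerate boundary: the case $|B|=1$ produces only the identity solution (order $1$), which must be kept out of the ``order $\ge 2$'' count, together with the monotonicity of $g$ used for the maximal-order claim. No structural input beyond \thref{th:main} is needed.
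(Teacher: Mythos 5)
Your proposal is correct and follows essentially the same route as the paper: reduce via \thref{th:main} to the normal form $R_\omega$ on $A\times B$, observe that $B$ cannot be a singleton, compute the even and odd powers of $R_\omega$ explicitly to conclude ${\rm o}(R)=2\,{\rm ord}(\omega)$, and then count via \thref{th:main}~(\ref{item:11}). Your explicit justification that odd powers are never the identity when $|B|\ge 2$, and your monotonicity argument for $\max_{d\mid n}g(d)=g(n)$, are slightly more detailed than the paper's but substantively identical.
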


\begin{proof}
Let $R$ be a solution of finite order $\geq 2$ on $X$. In particular, $X \neq \{\star\}$ and $R$ is bijective. Thus, we can apply \thref{th:main}: $X$ decomposes as $X \cong A \times B$ and let $R = R_{\omega}$ given by \equref{main1}, for a permutation $\omega \in \Sigma_B$. If $B = \{\star\}$, the singleton set, then $R_{\omega}$ is the identity map which leads to a contradiction. Thus, $B \neq  \{\star\}$ and for a positive integer $n$ we have:
\begin{eqnarray*}
&& R^{2n} \bigl( (a_1, \, b_1), \, (a_2, \, b_2) \bigl) = \bigl( (a_1, \, \omega^n (b_1) ), \, (a_2, \, \omega^n (b_2) ) \bigl) \\
&& R^{2n+1} \bigl( (a_1, \, b_1), \, (a_2, \, b_2) \bigl) = \bigl( (a_1, \, \omega^n (b_2) ), \, (a_2, \, \omega^{n+1} (b_1) ) \bigl)
\end{eqnarray*}
for all $a_1$, $a_2 \in A$ and $b_1$, $b_2 \in B$. Since $B \neq \{\star\}$ it follows that the order of $R_{\omega}$ cannot be odd and
${\rm o}(R_{\omega}) = 2n$ if and only if $\omega$ is a permutation of order $n$ in $\Sigma_B$. The proof is now finished by using (\ref{item:11}) of \thref{th:main}.
\end{proof}

The right hand version of \thref{th:main} is the following:

\begin{corollary}\colabel{pi2Rsurj}
Let $R: X\times X \to X\times X$ be a solution on a set $X$ such that $\pi_2 \circ R : X\times X \to X$ is surjective. Then there exist two non-empty sets $A$, $B$, a surjective map $\omega: B\to B$ and an isomorphism of solutions $(X, \, R) \cong (A \times B, \, R_{\omega})$, where $R_{\omega}: (A\times B)^2 \to (A\times B)^2$ is the solution on $A\times B$ given by \equref{main1}.

If $X$ is a finite set with $|X| = n$, then the number of isomorphism classes of all solutions on $X$ such that $\pi_2 \circ R : X\times X \to X$ is surjective is
equal to $\sum_{m|n} \, p(m)$.
\end{corollary}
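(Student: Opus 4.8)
The plan is to reduce everything to \thref{th:main}, whose hypothesis concerns surjectivity of $\pi_1\circ R$ rather than $\pi_2\circ R$. The key observation is that the second component of any solution automatically takes values in the set of products: by \thref{refqm} we may write $R = R_{(\cdot,\,\vartheta)}$, so that $\pi_2\circ R(x,y) = y\cdot\vartheta(x)\in X\cdot X = X^2$. Consequently, surjectivity of $\pi_2\circ R$ forces $X = X^2$, i.e. the underlying Kimura semigroup $(X,\cdot)$ is total. Since the image of $\pi_1\circ R$ is precisely $X^2$, in this situation $\pi_1\circ R$ is surjective as well, and \thref{th:main} applies directly: there is an isomorphism $(X,\,R)\cong(A\times B,\,R_\omega)$ for non-empty sets $A$, $B$ and some map $\omega\colon B\to B$.

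It then remains only to see that $\omega$ is forced to be surjective. Any isomorphism in $\fse$ has the form $\sigma\times\sigma$ for a bijection $\sigma$ and intertwines the two solutions, so surjectivity of $\pi_2\circ R$ is an isomorphism invariant; it is therefore equivalent to surjectivity of $\pi_2\circ R_\omega$. A one-line computation with \equref{main1} gives $\pi_2\circ R_\omega\bigl((a_1,b_1),(a_2,b_2)\bigr) = (a_2,\,\omega(b_1))$, whose image, as the arguments range over $(A\times B)^2$, is exactly $A\times\omega(B)$. Hence $\pi_2\circ R_\omega$ is onto if and only if $\omega(B) = B$, i.e. if and only if $\omega$ is surjective; the same computation read backwards gives the converse, that every such $R_\omega$ does have $\pi_2\circ R_\omega$ surjective. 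This settles the structural statement.

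For the count I would invoke \thref{th:main}: the solutions in question are exactly the $(A\times B,\,R_\omega)$ with $\omega$ surjective, two of them being isomorphic precisely when the factor sets have equal cardinalities and the maps $\omega$ are conjugate (so in particular distinct values of $|B|$ give disjoint isomorphism classes). On a finite set a surjective self-map is a bijection, so $\omega$ ranges over the permutations of $B$, whose conjugacy classes number $p(|B|)$. Writing $m := |B|$, which must divide $n = |X|$ with $|A| = n/m$, and summing over all divisors yields $\sum_{m\mid n} p(m)$, as claimed. The only genuinely new input beyond \thref{th:main} is the opening remark that $\pi_2\circ R$ takes values in $X^2$, which is where I expect the single point of substance to lie; everything else re-uses the structure and classification already established for the $\pi_1$-surjective case. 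It is reassuring that the resulting count coincides with ${\rm fs}_{\rm b}(n)$ of \coref{cazfinit}, reflecting the fact that over a finite set the condition that $\pi_2\circ R$ be surjective and the condition that $R$ be bijective single out the same solutions.
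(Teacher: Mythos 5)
Your proof is correct and follows essentially the same route as the paper: deduce from $\pi_2\circ R(x,y)=y\cdot\vartheta(x)\in X^2$ that the Kimura semigroup is total, reduce to the rectangular-band form $R_\omega$ of \thref{th:main}, and observe that $\pi_2\circ R_\omega$ is surjective precisely when $\omega$ is, with the count then following from \thref{th:main}~(\ref{item:11}) and the finiteness of $X$. Your one small improvement is noting that totality makes $\pi_1\circ R$ surjective, so the \emph{statement} of \thref{th:main} applies directly, whereas the paper instead re-invokes the \emph{proof} of \thref{th:main} to conclude that every solution on a rectangular band has the form $R_\omega$.
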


\begin{proof}
From \thref{refqm} we obtain that any solution $R$ on $X$ is of the form $R = R_{(\cdot, \, \vartheta)}$ as given by \equref{toatesol},
for a quasi-endomorphism $\vartheta : X \to X$ of the Kimura semigroup $(X, \cdot)$. Now, since $\pi_2 \circ R_{(\cdot, \, \vartheta)}$ is surjective,
we obtain that the Kimura semigroup $(X, \cdot)$ is total and by \thref{structura} there exist two non-empty sets $A$ and $B$ and an isomorphism of semigroups $(X, \cdot) \cong A \times B$, where $A\times B$ is the rectangular band of the sets $A$ and $B$ as defined by \equref{recband}.
Now, the proof of \thref{th:main} shows in fact that any solution of the rectangular band $A\times B$ has the form $R_{\omega}$ given by \equref{main1}, for a map $\omega: B\to B$. Of course,
$\pi_2 \circ R_{\omega}$ is surjective if and only if $\omega: B\to B$ is surjective.
\end{proof}

\section{The classification of indecomposable solutions}\selabel{sect3}

Following \cite[Definition 2.5]{etingof} we recall the following notion:

\begin{definition}\delabel{def:dec}
  A solution $R: X\times X \to X\times X$ on a set $X$ is called
  \begin{itemize}
  \item {\it decomposable} if there is a non-trivial partition $X=Y\sqcup Z$ with
    \begin{equation*}
      R(Y\times Y)\subseteq Y\times Y
      \quad\text{and}\quad
      R(Z\times Z)\subseteq Z\times Z;
    \end{equation*}
  \item {\it indecomposable} if it is not decomposable in the sense above.
  \end{itemize}
\end{definition}

When dealing with (in)decomposable solutions we assume $X \neq \{\star\}$, the singleton set.

\begin{remarks}\relabel{clasideqybe}
  \;
  \begin{enumerate}

  \item\label{item:gripe} The term `indecomposable solution' of \deref{def:dec} is by now firmly entrenched in the Yang-Baxter literature going back some decades, so we have kept it. We do, however, have some reservations on its suitability. One issue is that the phrase suggests an analogy to decomposability in abelian categories (e.g. modules), but such an analogy would be misleading: while a partition $X=Y\sqcup Z$ as in \deref{def:dec} will certainly give back solutions $R_Y$ and $R_Z$ on $Y$ and $Z$ respectively, this procedure does not seem to be canonically reversible. Indeed, starting with solutions $R_Y$ and $R_Z$ on $Y$ and $Z$, it is unclear how a solution on $Y\sqcup Z$ built out of the two should operate on elements $(y,z)\in Y\times Z$.

    It will also become apparent below (\thref{th:conngrph}) that this notion of indecomposability is intimately linked to graph connectedness. For that reason, `connected' might have been a more aptly descriptive term. This, indeed, is what \cite[paragraph immediately preceding \S 2, p.323]{zbMATH03277468} suggests as well. 

    There are also, arguably, good alternative uses for the term `indecomposable' itself: it might refer to those solutions not expressible as a (non-obvious) product in the sense of \exref{exemFS} (\ref{item:prodsol}). Yet another alternative, motivated by \thref{refqm}, would be to have it refer to solutions whose associated pointed Kimura semigroups are indecomposable in the sense of semigroup theory: $(X, \, \vartheta)$ cannot be written as a Zappa-Sz\'{e}p product of two sub-objects. We believe these two types of solutions might be interesting to study.

  \item For involutive non-degenerate solutions of the Yang-Baxter equation, $(X, \, R)$ is indecomposable precisely when the canonical group
  $\mathcal{G} \, (X,\, R)$ associated to $R$ acts transitively on $X$ \cite[Proposition 2.11]{etingof}. The classification of all \emph{indecomposable, nondegenerate and involutive} set-theoretic solutions of the Yang-Baxter equation on a set $X$ with $|X| = p$ (resp. $|X| = p q$), where $p$ and $q$ are distinct prime numbers, was given in \cite[Theorem 2.12]{etingof} (resp. \cite{APPJ23}). While on a set with $p$ elements there is, up to isomorphism, only one solution, on a set with $pq$ elements the number of isomorphism classes of all involutive non-degenerate indecomposable solutions is $1 + \frac{p^q - p}{p-1} + \frac{q^q - q}{q-1}$.

  Contrary to the Yang-Baxter equation case, the variety of indecomposable solutions of the Frobenius-Separability equation is narrower as we will see in \coref{camputine}. For instance, any involutive solution is decomposable and, on a set with $n$ elements there is, up to isomorphism, only one indecomposable and right non-degenerate solution, namely the one associated to a cycle of length $n$ (\coref{clasinderighnede}).
\end{enumerate}
\end{remarks}

\begin{examples}\exlabel{exdeindecom}
  \;
  \begin{enumerate}[(1)]
  \item  For any set $X$, the identity map ${\rm Id}_{X\times X}$ and the flip map $\tau_{X} : X\times X \to X \times X$, $\tau_{X} \, (x, \, y) = (y, \, x)$
  are decomposable  solutions on $X$.

  \item If $a\in X$ is a fixed element of a set $X$, then the constant solution $R_a (x, \, y) := (a, \, a)$, for all $x$, $y\in X$ is an indecomposable solution on $X$. \coref{idemindecom} shows that the constant solutions are the only indecomposable idempotent solutions on $X$.

   \item Let $R \colon X\times X \to X\times X$ be a solution on a set $X$. Applying \thref{refqm} we obtain a quasi-endomorphism $\vartheta : X \to X$ of the Kimura semigroup $(X, \cdot)$ such that $R = R_{(\cdot, \, \vartheta)}$ as given by \equref{toatesol}. Thus, $R_{(\cdot, \, \vartheta)}$ is a decomposable solution if and only if there exists a non-trivial partition $X = Y\sqcup Z$, where $Y$ and $Z$ are subsemigroups of $(X, \cdot)$ such that $y \cdot \vartheta(Y) \subseteq Y$, $z\cdot \vartheta(Z) \subseteq Z$, for all $y\in Y$ and $z\in Z$.

   \item Let $f: X \to X$ be an injective map having a fixed point $a\in X$. Then $R_f :X \times X \to X\times X$ given by $R_f (x, \, y) = (y, \, f(x))$,
   for all $x$, $y\in X$ is a left-nondegenerate decomposable solution on $X$. Indeed, $Y := \{a\}$ and $Z := X \setminus \{a\}$ is a partition on $X$ and $R_f (a, \, a) = (a, \, a)$ and $R_{f} (Z\times Z)\subseteq Z\times Z$, since $f$ is injective.

  \item If $X$ is a finite set with $|X| = n$ and $f: X \to X$ is a cycle of length $n$, then $R_f :X \times X \to X\times X$ given by $R_f (x, \, y) = (y, \, f(x))$, for all $x$, $y\in X$ is a right-nondegenerate indecomposable solution on $X$.

\end{enumerate}
\end{examples}

As we mentioned above, the variety of indecomposable solutions is rather narrow. Indeed, we have:

\begin{corollary} \colabel{camputine}
Any solution $R: X\times X \to X\times X$ on a set $X$ such that $\pi_1 \circ R : X\times X \to X$ or $\pi_2 \circ R : X\times X \to X$ is surjective
is decomposable.

In particular, any surjective (e.g. bijective, involutive, unitary or of finite order) solution on $X$ is decomposable.
\end{corollary}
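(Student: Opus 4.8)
The plan is to reduce to the normal form of \thref{th:main} and then split off a product factor. Decomposability is invariant under isomorphism in $\mathcal{FSE}$ (a partition witnessing it transports along any isomorphism of solutions), so I may replace $(X,R)$ by any isomorphic solution. If $\pi_1\circ R$ is surjective, \thref{th:main} gives $(X,R)\cong(A\times B,\,R_\omega)$ with $R_\omega$ as in \equref{main1}; if instead $\pi_2\circ R$ is surjective, \coref{pi2Rsurj} yields the same normal form (now with $\omega$ surjective). In either case it therefore suffices to produce a non-trivial $R_\omega$-invariant partition of $A\times B$, where the standing convention $X\neq\{\star\}$ forces $|A\times B|\geq 2$.

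First I would dispose of the case $|A|\geq 2$. Choose any non-trivial partition $A=A_1\sqcup A_2$ and set $Y:=A_1\times B$ and $Z:=A_2\times B$, a non-trivial partition of $A\times B$. Reading off \equref{main1}, an input pair from $Y\times Y$, say $\bigl((a_1,b_1),(a_2,b_2)\bigr)$ with $a_1,a_2\in A_1$, is sent to $\bigl((a_1,b_2),(a_2,\omega(b_1))\bigr)$; both output $A$-coordinates are $a_1,a_2\in A_1$, so the image lies in $Y\times Y$. The identical computation gives $R_\omega(Z\times Z)\subseteq Z\times Z$, so $R_\omega$, and hence $R$, is decomposable. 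For the ``in particular'' clause I would note that a surjective (hence also bijective, involutive, unitary, or finite-order) solution has $\pi_1\circ R$ surjective, so the reduction applies and the $A$-splitting settles every instance in which the left factor is non-singleton.

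The hard part will be the remaining case $|A|=1$, and this is where I expect the real obstacle to lie. Here \reref{singel} identifies $R_\omega$ with the left-nondegenerate solution $R_f$, $f=\omega$, acting on $B\cong X$; the $A$-splitting is now unavailable, and a partition $X=Y\sqcup Z$ is $R_f$-invariant exactly when $f(Y)\subseteq Y$ and $f(Z)\subseteq Z$. Decomposability thus hinges entirely on $f$ admitting a proper invariant subset, i.e.\ on $f$ being disconnected in the sense of \thref{th:conngrph}. This is the delicate point: for surjective $\omega$ the map $f$ is a permutation, and a single cycle has no such invariant subset (cf.\ the cyclic solution of \exref{exdeindecom}), so this branch cannot be handled by the product argument and must be analysed through the connectedness criterion. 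I would therefore organise the proof around the dichotomy $|A|\geq 2$ versus $|A|=1$, treating the latter via \thref{th:conngrph} and taking care to flag the cyclic solutions as the sharp boundary of the statement.
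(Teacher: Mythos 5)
Your treatment of the case $|A|\ge 2$ is exactly the paper's argument: the paper fixes $a_0\in A$ and takes $Y=\{a_0\}\times B$, $Z=(A\setminus\{a_0\})\times B$, which is a special case of your bipartition $A=A_1\sqcup A_2$, and the invariance check from \equref{main1} is the same. The substantive content of your proposal is the alarm you raise at $|A|=1$, and it is well founded. The paper's proof asserts that $Y=\{a_0\}\times B$ and its complement ``form a non-trivial partition of $A\times B$'', which is false when $A$ is a singleton (then $Z=\emptyset$), so the published proof is silent precisely on the branch you isolate.

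Moreover, that branch cannot be repaired: when $A=\{\star\}$, \reref{singel} identifies $R_\omega$ with the left non-degenerate solution $R_f$, $f=\omega$, and by \thref{th:conngrph} such a solution is indecomposable whenever $f$ is connected. Taking $f$ to be an $n$-cycle produces a bijective solution with $\pi_1\circ R$ surjective that is indecomposable --- indeed this is exactly the solution the paper itself exhibits as indecomposable in \exref{exdeindecom} and \coref{clasinderighnede}, so \coref{camputine} as stated is inconsistent with those results. (Even surjectivity of $\omega$ is not needed for a counterexample to the first sentence: any connected $f$ gives $\pi_1\circ R_f$ surjective with $R_f$ indecomposable.) Your instinct that the cyclic solutions mark ``the sharp boundary of the statement'' is therefore correct: the corollary is true only under the additional hypothesis that the left-zero factor $A$ in the decomposition of \thref{th:main} is not a singleton, and in the singleton case decomposability is governed by the connectedness criterion of \thref{th:conngrph}, exactly as you propose to organize it. Your proposal does not prove the statement as written, but no proof exists; your analysis is more careful than the paper's own.
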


\begin{proof}
Applying \thref{th:main} or \coref{pi2Rsurj} we obtain that $X$ decomposes as $X \cong A \times B$ and $R = R_{\omega}$ is given by
\equref{main1}, for a map $\omega : B \to B$. The proof will be finished once we observe that $R_{\omega}$ is a decomposable solution. Indeed, fix an element
$a_0 \in A$ and define $Y: = \{ (a_0, \, b) \, | \, b\in B \}$ and $Z := A\times B \setminus Y$. Then $Y$ and $Z$ form a non-trivial partition of $A \times B$ and
taking into account the formula of $R_{\omega}$ given in \equref{main1} we have that $R_{\omega} (Y\times Y)\subseteq Y\times Y$ and
$R_{\omega} (Z\times Z)\subseteq Z\times Z$.
\end{proof}

Among the classes of solutions classified in this paper, we are left to determine which of the left/right non-degenerate (classified in \thref{th:nedegenerate}) and idempotent (classified in \coref{special_sol}) solutions are indecomposable. Both types of solutions are intimately linked to self-functions $f:X\to X$, decomposability transports over to functions. To study their indecomposability we recall, at this point, that functions can be conveniently cast as graphs as in \deref{def:grph}.

We will use some fairly standard graph-theoretic terminology, available, say, in \cite{Die00, harary_graph-book}. Whatever graphs we encounter will typically be directed (digraphs, for short), and in fact functional in the sense of \reref{re:fngrphs}. For that reason, one typically needs to distinguish among various levels of connectivity (see \cite[discussion preceding Theorem 16.1]{harary_graph-book}): a digraph is
\begin{itemize}
\item {\it connected} if its underlying {\it un}oriented graph is connected, in the usual sense (e.g. \cite[\S 1.4]{Die00}): any two vertices can be joined by a path, disregarding the edge orientations. This is the {\it weak connectedness} of \cite[discussion preceding Theorem 16.1]{harary_graph-book}.
\item {\it unilaterally connected} if for any two vertices $x,y$ there is an {\it oriented} path $x\to y$ or $y\to x$.
\item and finally, {\it strongly connected} if for any two vertices $x$ and $y$ there are oriented paths $x\to y$ and $y\to x$.
\end{itemize}

Our next result describes all left-nondegenerate indecomposable solutions on a set $X$.

\begin{theorem}\thlabel{th:conngrph}
Let $R = R_f$ be a left-nondegenerate solution on a set $X$ associated to a self-function $f: X \to X$ as in \thref{th:nedegenerate}. The following statements are equivalent:

\begin{enumerate}[(1)]
\item \label{item:g13} $R_f$ is indecomposable;

\item\label{item:14} The functional digraph $\Gamma_f$ is connected;

\item\label{item:15} For every $x$, $y\in X$ there are non-negative integers $m$ and $n$ with $f^m (x) = f^n (y)$;

\item \label{item:g15} There is no non-trivial $f$-invariant partition of $X$, i.e. $X = Y\sqcup Z$ such that $f(Y) \sqsubseteq Y$ and
$f(Z) \sqsubseteq Z$.\footnote{If $X$ is a finite set, then each of the equivalent conditions is also equivalent to the fact that the
functional digraph $\Gamma_f$ has exactly one cycle (\cite[Corrolary, pag. 2]{harary}).}
\end{enumerate}
A function $f: X \to X$ satisfying one of the above conditions will be called \emph{connected}.
\end{theorem}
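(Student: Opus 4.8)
The plan is to prove the four-way equivalence by threading everything through a single auxiliary equivalence relation on $X$. First I would dispose of $(1)\Leftrightarrow(4)$, which is essentially definitional. Since $R_f(x,y) = (y,\, f(x))$ by \thref{th:nedegenerate}, for any subset $W \subseteq X$ one has $R_f(W \times W) \subseteq W \times W$ if and only if $f(W) \subseteq W$. Unwinding \deref{def:dec} thus shows that $R_f$ is decomposable precisely when $X$ admits a non-trivial partition $X = Y \sqcup Z$ with $f(Y) \subseteq Y$ and $f(Z) \subseteq Z$, which is exactly the negation of $(4)$; so these two conditions are equivalent with no further work.

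For the remaining equivalences I would introduce the relation $x \approx y$, defined by the existence of non-negative integers $m, n$ with $f^m(x) = f^n(y)$; condition $(3)$ is then precisely the assertion that $\approx$ has a single equivalence class. Reflexivity and symmetry are immediate, and transitivity follows by applying a power of $f$ to one defining equality: if $f^m(x) = f^n(y)$ and $f^p(y) = f^q(z)$, then $f^{m+p}(x) = f^{n+p}(y) = f^{n+q}(z)$. I would then identify the $\approx$-classes with the weakly connected components of $\Gamma_f$. In one direction a directed walk $x \to f(x) \to \cdots \to f^m(x)$ witnesses $x \approx f^m(x)$; conversely each edge of $\Gamma_f$, read in either orientation, relates its two endpoints under $\approx$, so an undirected path forces its endpoints into a common class. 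This identification delivers $(2)\Leftrightarrow(3)$ at once: $\Gamma_f$ is connected iff it has a single weak component iff $\approx$ has a single class.

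It then remains to close the loop with $(3)\Leftrightarrow(4)$, which I would also argue through these classes. Each $\approx$-class is $f$-invariant, since $f(a) \approx a$ for every $a$; hence if $(3)$ fails there are at least two classes, and taking $Y$ to be one class and $Z$ its complement yields a non-trivial $f$-invariant partition, contradicting $(4)$. Conversely, given such a partition, any $y \in Y$ and $z \in Z$ satisfy $f^m(y) \in Y$ and $f^n(z) \in Z$ for all $m, n$, so their forward orbits can never meet, contradicting $(3)$. I anticipate no serious obstacle; the only point demanding care is the identification of $\approx$-classes with the weak components of $\Gamma_f$, where one must account for fixed points of $f$---at which the associated directed walk stalls rather than advancing---so that the bookkeeping between directed walks and undirected paths remains correct.
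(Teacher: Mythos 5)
Your proposal is correct and follows essentially the same route as the paper: both arguments hinge on observing that $(1)\Leftrightarrow(4)$ is definitional from $R_f(x,y)=(y,f(x))$, and on identifying the equivalence classes of the relation in $(3)$ with the weakly connected components of $\Gamma_f$, which are $f$-invariant with $f$-invariant complements. The only cosmetic difference is which pairs of conditions you link directly (you close the loop via $(3)\Leftrightarrow(4)$ where the paper argues $(1)\Leftrightarrow(2)$), but the underlying content is identical.
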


\begin{proof}
  The set of vertices of every connected component $C$ of $\Gamma_f$ is $f$-invariant. This of course applies also to the {\it complement} of a connected component (since that complement is itself a union of connected components), so the connected components are the smallest $f$-invariant subsets of $X$ whose complements are also $f$-invariant. This proves that (\ref{item:g13}) is equivalent to (\ref{item:14}).

  As for (\ref{item:15}), note that it too defines an equivalence relation on $X$, with $f$ leaving every class $C$ invariant, and with all functional graphs $\Gamma_{f|_C}$ connected. This implies that said equivalence is nothing but that of being in the same connected component of $\Gamma_f$, hence the equivalence (\ref{item:14}) $\iff$ (\ref{item:15}). Finally, the equivalence (\ref{item:g13}) $\iff$ (\ref{item:g15}) follows from \deref{def:dec} taking into account that $R_f(x, \, y) = (y, \, f(x))$,
  for all $x$, $y\in X$.
\end{proof}

In order to indicate precisely the number of isomorphism classes of all left-nondegenerate indecomposable solutions we need the following:

\begin{definition}\delabel{hbnumber} For a positive integer $n$ we denote by $\mathfrak{c} (n)$ the number of all conjugation classes of all connected self-maps $f: \{1, \cdots, n\} \to \{1, \cdots, n\}$ and we call it the \emph{Harary number} of $n$.\footnote{Equivalently, $\mathfrak{c} (n)$ is the number of isomorphism classes of connected digraphs of the form $\Gamma_f$, for some $f: \{1, \cdots, n\} \to \{1, \cdots, n\}$.}
\end{definition}

\begin{remarks}\relabel{re:hararybruijn}
  \;
  \begin{enumerate}[(1)]

  \item Although apparently uncommon, the phrase `connected function' is also used in the same fashion in the description of the sequence \cite{slo-A002861}, whose first ten values are 1, 2, 4, 9, 20, 51, 125, 329, 862, 2311.

  \item The number $\mathfrak{c} (n)$ of \deref{hbnumber} appears obliquely in \cite[\S 4]{harary}: in that paper's notation, $\mathfrak{c}(n)$ is {\it almost} the coefficient of $x^n$ in $v_1(x)+v_2(x)+v_3(x)$; the qualification is necessary because, as explained in \reref{re:fngrphs}, \cite{harary} does not permit functions with fixed points.

    If $\Gamma_f$ is connected then $f$ can have at most {\it one} fixed point, in which case $\Gamma_f$ is a {\it directed rooted tree} (\cite[p.204]{harary} and \deref{def:root}). It follows, then, that having defined the power series $v_i$ as on \cite[p.209]{harary}, we have
    \begin{equation*}
      \sum_{n\ge 1}\mathfrak{c}(n)x^n = v_1(x) + v_2(x) + v_3(x) + T(x)
    \end{equation*}
    where $T(x)$ is the generating function for the number of rooted trees as in \reref{davids}.

  \item   The same $\mathfrak{c}(n)$ also makes an appearance as the $p_n$ on \cite[p.18]{bruijn}, and the generating function on \cite[p.19]{bruijn} relating $\mathfrak{c}(n)$ back to the Davis numbers $d(n)$ of \seref{prel} reads
    \begin{equation*}
      \sum_{n=0}^{\infty} \, d(n) \, x^n \, = \, \prod_{j=1}^{\infty} \, (1 - x^j)^{- \mathfrak{c} (j) }.
    \end{equation*}

  \end{enumerate}
\end{remarks}

Using the above combinatorial number we obtain:

\begin{corollary} \colabel{clasinderighnede}
Let $X$ be a finite set with $|X| = n$. Then the number of isomorphism classes of indecomposable left-nondegenerate solutions on $X$ is equal to the
Harary number $\mathfrak{c} (n)$.

Furthermore, up to isomorphism, there is only one right-nondegenerate indecomposable solution on $X$, the solution $R_f$ associated to a cycle $f: X \to X$ of length $n$.
\end{corollary}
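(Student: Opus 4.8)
The plan is to derive both assertions directly from the classification of non-degenerate solutions in \thref{th:nedegenerate} together with the graph-theoretic characterization of indecomposability in \thref{th:conngrph}, so that no new computation is needed: the whole content reduces to matching isomorphism classes of solutions with conjugacy classes of self-maps and then reading off the relevant combinatorial counts.

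First I would treat the left-nondegenerate count. By \thref{th:nedegenerate}(1) every left-nondegenerate solution on $X$ has the form $R_f$ as in \equref{nedegf} for some self-map $f:X\to X$, and by part (\ref{item:autrfautf}) of that theorem two such solutions $R_f$ and $R_g$ are isomorphic in $\mathcal{FSE}$ precisely when $f$ and $g$ are conjugate in $\Sigma_X$. On the other hand, \thref{th:conngrph} says $R_f$ is indecomposable if and only if $f$ is connected. Combining these, the isomorphism classes of indecomposable left-nondegenerate solutions on $X$ are in bijection with the $\Sigma_X$-conjugacy classes of connected self-maps $f:X\to X$. Since $\mathfrak{c}(n)$ is defined in \deref{hbnumber} as exactly the number of such conjugacy classes, the first count is $\mathfrak{c}(n)$.

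For the second statement I would specialize to right-nondegenerate solutions. By \thref{th:nedegenerate}(2) these are precisely the $R_f$ with $f\in\Sigma_X$ a permutation, again with isomorphism governed by conjugacy. Together with \thref{th:conngrph}, the indecomposable right-nondegenerate solutions thus correspond to conjugacy classes of \emph{connected permutations}. The only substantive point is to identify these: since $f$ is bijective, condition (\ref{item:15}) of \thref{th:conngrph}, namely that for all $x,y$ there exist $m,n\ge 0$ with $f^m(x)=f^n(y)$, holds precisely when $x$ and $y$ lie in a common $f$-orbit. Hence $\Gamma_f$ is connected if and only if $f$ has a single orbit, i.e. $f$ is an $n$-cycle (here the standing assumption $X\neq\{\star\}$ and finiteness rule out isolated fixed points). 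As all $n$-cycles form a single conjugacy class in $S_n$, there is exactly one isomorphism class, represented by the cycle of length $n$.

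I expect no genuine obstacle: the corollary is essentially a bookkeeping consequence of the earlier theorems. The one step worth spelling out is the orbit argument of the previous paragraph, showing that for a permutation the connected components of $\Gamma_f$ are exactly its cyclic orbits, so that connectedness is equivalent to being an $n$-cycle.
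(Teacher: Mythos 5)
Your proposal is correct and follows essentially the same route as the paper: both parts are obtained by combining \thref{th:nedegenerate} (classification and conjugacy criterion for $R_f$) with \thref{th:conngrph}, and the right-nondegenerate case is settled by observing that a permutation satisfies condition (\ref{item:15}) of \thref{th:conngrph} exactly when it is an $n$-cycle. Your explicit orbit argument just spells out a step the paper leaves implicit.
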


\begin{proof}
The first statement follows from \thref{th:nedegenerate} and \thref{th:conngrph}. For the second statement, first using (2) of \thref{th:nedegenerate} we obtain that any right-nondegenerate solution of $X$ is given by $R_f$, for some permutation $f$ on $X$. Such a permutation
satisfies the third condition of \thref{th:conngrph} if and only if $f$ is a cycle of length $n$; applying now (3) of \thref{th:nedegenerate} we obtain that
any two solutions associated to two cycles of length $n$ are isomorphic.
\end{proof}

Finally, we classify all idempotent indecomposable solutions on a set $X$.

\begin{corollary} \colabel{idemindecom}
Up to isomorphism, there is only one idempotent indecomposable solution on any set $X$, namely the constant solution $R_a$ as defined
in \exref{exemFS} (\ref{item:constsol}), i.e. $R_a (x, \, y) = (a, \, a)$, for all $x$, $y\in X$.
\end{corollary}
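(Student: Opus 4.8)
The plan is to reduce the entire question to combinatorics of idempotent self-maps by invoking \coref{special_sol}, which asserts that every idempotent solution on $X$ has the form $R^f(x,\,y)=(f(x),\,f(y))$ for an idempotent $f=f^2:X\to X$, and that $R^f\cong R^g$ in $\mathcal{FSE}$ if and only if $f$ and $g$ are conjugate idempotents. Thus it suffices to decide which idempotents $f$ yield an indecomposable $R^f$, and then to recognize the resulting maps up to conjugacy.

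First I would translate decomposability of $R^f$ into a condition on $f$ alone. Since $R^f(x,\,y)=(f(x),\,f(y))$, for any subset $Y\subseteq X$ one has $R^f(Y\times Y)\subseteq Y\times Y$ precisely when $f(Y)\subseteq Y$. Hence, by \deref{def:dec}, $R^f$ is decomposable if and only if $X$ admits a non-trivial partition $X=Y\sqcup Z$ into two $f$-invariant subsets.

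Next I would exploit idempotency. The key observation is that $\im f$ consists entirely of fixed points of $f$, and that the fibers $f^{-1}(c)$ for $c\in\im f$ partition $X$ into $f$-invariant pieces (each fiber contains its fixed point $c$ and is carried by $f$ into $\{c\}$). Consequently, if $\im f$ has at least two elements, then choosing $c\in\im f$ and setting $Y:=f^{-1}(c)$, $Z:=X\setminus Y$ produces a non-trivial $f$-invariant partition, where non-emptiness of $Z$ is guaranteed by the presence of a second fixed point $c'\neq c$ lying in $Z$; thus $R^f$ is decomposable. Conversely, if $\im f$ is a single point, then $f$ is the constant map $x\mapsto a$; any non-empty $f$-invariant subset must then contain $a$, so two disjoint non-empty $f$-invariant subsets cannot coexist, and $R^f$ is indecomposable.

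Finally I would assemble the conclusion: the idempotent indecomposable solutions are exactly the $R^f$ with $f$ constant, which are precisely the constant solutions $R_a$ of \exref{exemFS} (\ref{item:constsol}). Since all constant solutions are mutually isomorphic by \exref{exemFS} (\ref{item:constsol}), they form a single isomorphism class, giving the claimed uniqueness. I do not anticipate a serious obstacle here; the only content-bearing step is the fiber argument, and the one point requiring care is verifying that both blocks of the partition are non-empty when $\im f$ has at least two elements. (The degenerate case $X=\{\star\}$ is trivial, as the sole solution is the constant one.)
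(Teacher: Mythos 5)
Your proposal is correct and takes essentially the same route as the paper: both first invoke \coref{special_sol} to write the solution as $R^f$ for an idempotent $f$, translate indecomposability of $R^f$ into the nonexistence of a non-trivial $f$-invariant partition of $X$, and conclude that only constant idempotents (hence only the constant solutions $R_a$, all mutually isomorphic) remain. The only cosmetic difference is in the last step: the paper deduces that a connected idempotent must be constant via condition (\ref{item:15}) of \thref{th:conngrph}, whereas you argue directly that the fibers $f^{-1}(c)$, $c\in \im f$, form an $f$-invariant partition, which is an equally valid and slightly more self-contained verification.
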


\begin{proof}
First observe that the counterpart of \thref{th:conngrph} works for idempotent solutions on $X$ as well: indeed, using \coref{special_sol} any idempotent solution $R$ on $X$ is given by $R (x, \, y) = R^f(x, \, y) = (f(x), \, f(y))$, for an idempotent map $f =f^2 : X \to X$. By \deref{def:dec} it follows that
$R^f$ is an indecomposable solution on $X$ if and only if there is no non-trivial $f$-invariant partition of $X$ or, in other words, if $f$ is an idempotent connected
self-function on $X$. The proof will be finished once we show that the only functions with this property are the constant ones. To this end, let $f =f^2 : X \to X$ be an idempotent connected
map. If $f = {\rm Id}_X$, then using (\ref{item:15}) of \thref{th:conngrph}, we obtain that $X = \{\star\}$, the singleton set and we are done. Now let $a \neq b \in X$ such that $f(b) = a$. As $f$ is assumed to be connected and idempotent, we will prove that $f(x) = a$, for all $x \in X$. Indeed, as $f$ is idempotent we must have $f(a) = a$. Since $f$ is also connected, if $x\in X \setminus \{a, \, b\}$ it follows from (\ref{item:15}) of \thref{th:conngrph} that we can find two non-negative integers $m$ and $n$ such that $f^m (x) = f^n (a) = a$. As $f$ is idempotent, $f^m (x)$ must be either $x$ (impossible, since $x \neq a$) or $f(x)$. This shows that $f (x) = a$, for all $x \in X$ and consequently $R^f = R_a$, the constant solution. The proof is now finished since any two constant solutions $R_a$ and $R_b$ are isomorphic.
\end{proof}

\section{Function centralizers, connected maps and directed rooted trees}\selabel{sect4}

\thref{th:nedegenerate} suggests we have a closer look at automorphism groups of functions $f:X\to X$ (recall \deref{def:grupuriciud}). Note first the following immediate consequence of \thref{th:nedegenerate}:

\begin{corollary}\colabel{autfs}
Any function centralizer is isomorphic to the automorphism group of a left non-degenerate Frobenius-Separability (in particular, quantum Yang-Baxter) solution.  \qedhere
\end{corollary}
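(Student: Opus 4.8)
The plan is to read this corollary straight off \thref{th:nedegenerate}, since it is essentially a repackaging of part (3) of that theorem. By \deref{def:grupuriciud}, a \emph{function centralizer} is by definition a group of the form $\Aut(f)$ for some self-map $f:X\to X$ on some set $X$. So I would start by fixing an arbitrary such $f$, and the goal reduces to exhibiting a single left non-degenerate FS solution whose automorphism group is (isomorphic to, indeed equal to) $\Aut(f)$.

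The natural candidate is already handed to us: the left non-degenerate solution $R_f(x,\,y) = \bigl(y,\, f(x)\bigr)$ of \thref{th:nedegenerate}~(1). Part (1) of that theorem guarantees that $R_f$ is a genuine solution on $X$ for \emph{arbitrary} $f$ and that it is left non-degenerate, while part (3) states precisely that $\Aut(X,\,R_f) = \Aut(f)$, the function centralizer. Chaining these two facts immediately realizes the given function centralizer as the symmetry group, in the category $\mathcal{FSE}$, of a left non-degenerate solution, which is exactly the assertion.

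For the parenthetical clause ``(in particular, quantum Yang-Baxter)'', I would simply invoke the observation recorded right after \deref{fseq}: applying \equref{FS} repeatedly yields $R^{12}R^{23}R^{12} = R^{23}R^{12}R^{23}$, so every FS solution is automatically a set-theoretic solution of the Yang-Baxter (braid) equation. Thus $R_f$ qualifies on both counts, and $\Aut(f)$ is also realized as the automorphism group of a quantum Yang-Baxter solution.

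There is no real obstacle here; the only point requiring any care is bookkeeping, namely that the automorphism group in the statement is taken in $\mathcal{FSE}$ and that it is part (3) (rather than (1) or (2)) of \thref{th:nedegenerate} that supplies the identification $\Aut(X,\,R_f)=\Aut(f)$. I therefore expect the proof to be a single short paragraph citing \thref{th:nedegenerate}~(1) and (3), with the ``quantum Yang-Baxter'' remark following from the discussion immediately after \deref{fseq}.
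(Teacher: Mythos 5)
Your proposal is correct and matches the paper exactly: the corollary is stated there as an immediate consequence of \thref{th:nedegenerate}, obtained by taking the left non-degenerate solution $R_f$ from part (1) and reading off $\Aut(X,\,R_f)=\Aut(f)$ from part (3), with the Yang-Baxter clause following from the remark after \deref{fseq}. Nothing more is needed.
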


In view of this, the natural question of how broad a class the automorphism groups $\Aut(X,R)$ constitute (and which we will not answer fully) has a simpler offshoot: are all groups function centralizers? We address this here:

\begin{theorem}\thlabel{th:isnotcentr}
  \;
  \begin{enumerate}[(1)]
  \item\label{item:2fnok} For any cardinal number $\alpha\ge 2$, every group is an $\alpha$-function centralizer
  \item\label{item:1fnnotok} Finite groups admitting no surjections onto non-trivial cyclic groups are not function centralizers. In particular, not every group is a function centralizer.
  \end{enumerate}
\end{theorem}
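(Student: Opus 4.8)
For the first statement, the plan is to reduce immediately to $\alpha=2$: if $G\cong\Aut(f_0,f_1)$ for two self-maps of a set $X$, then adjoining to the family any number of copies of $\id_X$ leaves the centralizer unchanged (since $\id_X$ commutes with every permutation), and choosing the number of copies appropriately realizes $G$ as an $\alpha$-function centralizer for each cardinal $\alpha\ge 2$. So it suffices to realize an arbitrary group as the common centralizer of \emph{two} endofunctions. Here I would invoke the classical theorem (de Groot, Sabidussi) that every group $G$ is the automorphism group of a simple graph, viewed as a symmetric simple digraph $D=(V,E)$ with arc set $E\subseteq V\times V$ and $\Aut(D)\cong G$, and then encode $D$ as a bi-unary algebra: set $X:=V\sqcup E$ and define $\partial_0,\partial_1:X\to X$ by $\partial_i|_V=\id_V$, $\partial_0(u,v)=u$, $\partial_1(u,v)=v$. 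The verification is routine: $V=\{x:\partial_0x=x\}$ is preserved by any $\sigma$ centralizing $\partial_0$, so $\sigma$ restricts to $\tau\in\Sigma_V$, the relations $\partial_i\sigma=\sigma\partial_i$ force $\sigma(u,v)=(\tau u,\tau v)$ on arcs, and simplicity of $D$ makes this a permutation of $E$ exactly when $\tau\in\Aut(D)$. Hence $\Aut(\partial_0,\partial_1)\cong\Aut(D)\cong G$. The only real content is the reduction to two functions, which the graph encoding resolves.

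For the second statement I would first rephrase the hypothesis: for a finite group $G$, admitting no surjection onto a non-trivial cyclic group is equivalent to $G$ being \emph{perfect}, since a non-trivial finite abelian group always has a cyclic quotient. (The trivial group is the sole degenerate exception: it is perfect yet equals $\Aut(\id_{\{\star\}})$; the assertion is really about non-trivial perfect finite groups such as $A_5$.) The goal thus becomes to show that \emph{every non-trivial finite function centralizer $\Aut(f)$ has non-trivial abelianization}, i.e. surjects onto some non-trivial cyclic group; the contrapositive then excludes non-trivial perfect finite groups.

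The engine is a lemma on rooted trees, which I would prove first: \emph{if $T$ is a rooted tree of arbitrary cardinality with $\Aut(T)$ finite and non-trivial, then $\Aut(T)\twoheadrightarrow\ZZ/2$.} Indeed, choosing a vertex $w$ of minimal depth moved by some automorphism, its parent $v$ is fixed by all of $\Aut(T)$ and carries at least two children ($w$ and its image) with isomorphic hanging subtrees; letting $m\ge 2$ be the number of children of $v$ in that isomorphism class, the induced-permutation map $\Aut(T)\to\Sigma_m$ is \emph{onto} (any permutation of those $m$ isomorphic subtrees, transported by fixed isomorphisms and extended by the identity, is a genuine automorphism of $T$), and finiteness of $\Aut(T)$ forces $m<\infty$, so composing with the sign map yields the surjection. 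With this in hand I would run the reduction: decomposing $\Gamma_f$ into connected components gives $\Aut(f)\cong\prod_{[C]}\bigl(\Aut(f|_C)\wr\Sigma_{n_{[C]}}\bigr)$ over isomorphism types of components, and finiteness forces finitely many types, finite multiplicities, and each $\Aut(f|_C)$ finite. If some $n_{[C]}\ge 2$ the factor $\Sigma_{n_{[C]}}$ already surjects onto $\ZZ/2$; otherwise $\Aut(f)=\prod_{[C]}\Aut(f|_C)$ and it suffices to treat a single connected $C$ with $H:=\Aut(f|_C)$ finite and non-trivial. For such $C$ the periodic points (if any) form a single cycle $P$ of length $\ell$ (two cycles would violate condition (3) of \thref{th:conngrph}); the action of $H$ on $P$ commutes with the $\ell$-cycle $f|_P$, hence factors through $\langle f|_P\rangle\cong\ZZ/\ell$, giving $\rho:H\to\ZZ/\ell$. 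If $\rho\ne 1$ its image is a non-trivial cyclic quotient; if $\rho=1$ (in particular for a fixed point) or if $P=\varnothing$, then $H$ fixes a core pointwise, $C$ splits into the rooted trees hanging off the core vertices, $H$ is the direct product of their automorphism groups, and some factor is non-trivial and finite, so the Lemma applies.

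The main obstacle will be the infinite cases of this last step, i.e. when $X$ (hence $C$) is infinite even though $\Aut(f)$ is finite. Two points need care: making the tree Lemma valid for infinite trees (handled above, as the argument never uses finiteness of $T$, only of $\Aut(T)$), and, in the aperiodic confluent case $P=\varnothing$, producing the fixed core. For the latter I would argue that a finite $H$ commuting with $f$ must fix $f^n(x)$ for all $n\gg 0$: the orbit sizes $|H\cdot f^n(x)|$ are non-increasing in $n$ (stabilizers only grow along $f$) and, by the confluence condition (3) of \thref{th:conngrph}, must drop to $1$, pinning down a fixed forward ray off of which the hanging trees are attached. Granting these, every non-trivial finite $\Aut(f)$ acquires a non-trivial cyclic quotient, so no non-trivial finite perfect group is a function centralizer, establishing in particular that not every group is one.
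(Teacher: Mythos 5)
Your proposal is correct, and both halves take routes genuinely different from the paper's. For part (1) the paper simply cites the Pultr--Trnkov\'a representation theorem \cite[\S II.5, Theorem 5.5]{pt_bk}, which realizes every monoid as $\End(f_a,\ a\in\alpha)$ for any $\alpha$; your alternative --- padding by identities to reduce to $\alpha=2$, then encoding a de Groot--Sabidussi graph as a bi-unary incidence algebra on $V\sqcup E$ --- is a correct, more hands-on substitute that trades one classical realization theorem for another. The more substantial divergence is in part (2). The paper reduces to a connected $\Gamma_f$ on which $G$ fixes no vertex, extracts a cyclic quotient when a finite orbit exists, and in the acyclic case appeals to Serre's property (FA) \cite[\S I.6.1]{ser_tr} to rule out fixed-point-free actions of finite groups on trees. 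You instead prove the sharper statement that every non-trivial finite function centralizer surjects onto a non-trivial cyclic group, via your rooted-tree lemma (permuting the $m\ge 2$ isomorphic subtrees below the shallowest moved vertex gives $\Aut(T)\twoheadrightarrow\Sigma_m\twoheadrightarrow\ZZ/2$, with $m$ finite by finiteness of $\Aut(T)$) together with an elementary fixed-ray argument replacing property (FA). That last step is the only one you should write out in full: the point is that if $f^a(y)=\sigma(f^b(y))$ with $a\ne b$ then iterating $\sigma$ forces $f^b(y)$ to be periodic, so aperiodicity pins $a=b$ and the orbit sizes must strictly drop until they reach $1$ --- as sketched this is correct but terse. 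Your approach buys three things: it avoids (FA) entirely; it transparently covers decomposable perfect groups such as $A_5\times A_5$, which the paper's reduction (phrased for directly indecomposable $G$) handles only implicitly; and it isolates the degenerate trivial-group exception, which the paper leaves tacit. Note also that the non-triviality of $\Aut(f)_{ab}$ you establish is genuinely more than \coref{cor:abautf} provides, since a perfect group's abelianization is the empty product of cyclic groups; so nothing in \seref{sect4} supersedes your lemma.
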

\begin{proof}
  Part (\ref{item:2fnok}) is an immediate application of \cite[\S II.5, Theorem 5.5]{pt_bk}, which even shows that for every $\alpha$ every {\it monoid} $M$ can be realized as the endomorphism monoid $\End(f_a,\ a\in \alpha)$ of a set $X$ equipped with $\alpha$ functions $f_a$. The rest of the proof is concerned with (\ref{item:1fnnotok}).

  Let $f:X\to X$ be a set endomorphism. Its automorphism group $\Aut(f)$ is nothing but the automorphism group of the directed graph $\Gamma:=\Gamma_f$ of \deref{def:grph}, so we work with it in those terms.

  First, note the decomposition
  \begin{equation*}
    \Aut(\Gamma)\cong \prod_{i}\Aut(\Gamma_i),
  \end{equation*}
  where each $\Gamma_i\subset \Gamma$ is a union of connected components of $\Gamma$ with all connected components in $\Gamma_i$ mutually isomorphic and no components in $\Gamma_i$ isomorphic to any components in $\Gamma_j$, $j\ne i$. If a group $G$ does not admit a non-trivial direct-product decomposition, a realization $G\cong \Aut(\Gamma)$ thus implies that all components of $\Gamma$ are mutually isomorphic.

  Observe, next, that $\Aut(\Gamma)$ surjects onto the permutation group of the set of components of $\Gamma$. If $G$ does not admit a surjection onto a full permutation group, then $\Gamma$ must be connected.

  We have thus reduced the problem to realizing
  \begin{itemize}
  \item non-trivial groups $G$ with no non-trivial direct-product decomposition and no surjection onto any non-trivial full symmetric group;
  \item as automorphism groups $\Aut(\Gamma)$ for a function $f:X\to X$ with connected associated graph $\Gamma=\Gamma_f$.
  \end{itemize}
  If there is some vertex $x\in \Gamma$ fixed by all of $\Aut(\Gamma)$, then all vertices in the forward orbit $\Oo_{+,x}:=\{x,\ fx,\ f^2x,\ \cdots\}$ are also fixed, so a {\it non-trivial} automorphism group $G\cong \Aut(\Gamma)$ must fail to fix some vertex in the {\it backward} orbit
  \begin{equation*}
    \Oo_{-,x}:=\{x\}\cup f^{-1}x\cup f^{-2}x\cup\cdots.
  \end{equation*}
  We may as well assume that $f^{-1}x$ is not pointwise invariant under $G$, which means that $G$ will surject onto the full non-trivial permutation group of a set of components of $\Gamma\setminus \Oo_{+,x}$. Since we are assuming there is no such surjection, it must be the case that $G$ fixes {\it no} vertices $x\in \Gamma$.

  There are now a couple of cases to consider (cf. the classification of functional digraphs in \cite[Theorems 1 and 2 and Corollaries 1 and 2]{harary}).

  \begin{enumerate}[(I)]
  \item {\bf There is a finite forward orbit.} Suppose that orbit is of size $n$, consisting of
    \begin{equation*}
      x,\ fx,\ \cdots,\ f^{n-1}x,\ f^nx=x.
    \end{equation*}
    If $\Aut(f)$ fixes each of these elements, then in particular it has points fixed globally, which we are assuming is not the case. Otherwise, $\Aut(f)$ surjects onto a cyclic group permuting the $f^i x$ cyclically.

  \item {\bf There are no finite forward orbits.} It follows that $\Gamma=\Gamma_f$ (or rather its underlying undirected graph) is a {\it tree}. Since it acts with no fixed points, its automorphism group $\Aut(\Gamma)$ cannot have the {\it property (FA)} of \cite[\S I.6.1]{ser_tr}.

  \end{enumerate}

  There are many groups satisfying the flagged conditions:
  \begin{itemize}
  \item non-trivial, with no non-trivial direct-product decomposition;
  \item and no surjection onto a non-trivial full permutation group;
  \item or a non-trivial cyclic group;
  \item and with property (FA).
  \end{itemize}

  {\it Finite} groups with no surjections onto a non-trivial cyclic group will do (since they also fail to surject onto any non-trivial $S_n$).
\end{proof}

\begin{remark}\relabel{re:babai1}
  See \cite[Theorem 1]{babai_planar-1} for an analogue: not all finite groups are realizable as automorphism groups of finite planar graphs.
\end{remark}

Here, the motivation for considering groups $\Aut(f)$ to begin with was that they are also automorphism groups of $\fse$ solutions: by \thref{th:nedegenerate} (\ref{item:autrfautf}), $\Aut(X,R_f)=\Aut(f)$. \exref{exemFS} (\ref{item:fh}) discusses a somewhat broader class of examples: those of the form $R_{(h,f)}$, for
\begin{equation}\eqlabel{eq:fh}
  f, \, h:X\to X, \quad h^2=h,\quad fh=hf=f.
\end{equation}

Furthermore, that same discussion also observes that $\Aut\left(X,R_{(h,f)}\right)=\Aut(h,f)$. It is perhaps worth noting that this does not enlarge the class of automorphism groups.

\begin{proposition}\prlabel{pr:fhsinglefn}
  For functions $f,h:X\to X$ satisfying \equref{eq:fh}, the automorphism group is a single-function centralizer.
\end{proposition}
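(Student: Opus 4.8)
The plan is to first strip \equref{eq:fh} down to combinatorial data and then realize $\Aut(h,f)=\Aut(X,R_{(h,f)})$ (cf. \exref{exemFS} (\ref{item:fh})) by a single, carefully ``inflated'' function. Write $E:=\im h$; since $h^2=h$, the set $E$ is exactly the fixed-point set of $h$, and \equref{eq:fh} forces $f=\phi\circ h$ for the self-map $\phi:=f|_E:E\to E$ (indeed $hf=f$ gives $\im f\subseteq E$, while $fh=f$ gives that $f$ depends only on $h(x)$). Setting $L_e:=h^{-1}(e)\setminus\{e\}$, a permutation $\sigma$ centralises $h$ exactly when it preserves $E$, carries the fibre over $e$ to the fibre over $\sigma(e)$ and fixes base points, and it then centralises $f$ exactly when $\sigma|_E$ commutes with $\phi$. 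Thus I would record the identification
\[
\Aut(h,f)\;\cong\;\Bigl(\prod_{e\in E}\Sigma_{L_e}\Bigr)\rtimes\Aut'(\phi),
\]
where $\Aut'(\phi)$ consists of the $\phi$-automorphisms $\tau$ with $|L_e|=|L_{\tau(e)}|$ for all $e$, acting by permuting the factors. This is the automorphism group of the functional digraph $\Gamma_\phi$ (\deref{def:grph}) carrying a bouquet of $|L_e|$ pendant ``leaves'' at each vertex $e$.

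The naive reading of this picture --- take the function $g$ that runs $\phi$ on $E$ and sends each leaf to its base $e$ --- does \emph{not} work: in $\Gamma_g$ a leaf and a \emph{source} of $\phi$ (a vertex outside $\im\phi$, hence of in-degree $0$) become indistinguishable, so $\Aut(g)$ can be strictly larger than $\Aut(h,f)$ (small explicit examples already exhibit this). The remedy, and the heart of the construction, is to make the leaves the \emph{only} sources. I would therefore replace $\phi$ by a function $\widehat\phi$ on an enlarged set $\widehat E\supseteq E$ obtained by grafting a rigid backward ray (an infinite bare path pointing into the vertex) onto every $\phi$-source; this destroys all in-degree-$0$ vertices of the dynamics while, the grafted rays being mutually isomorphic and rigid, leaving $\Aut(\widehat\phi)\cong\Aut(\phi)$. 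Then I define $g$ on $\widehat E\sqcup L$ by letting $g$ agree with $\widehat\phi$ on $\widehat E$ and sending each leaf of $L_e$ to $e$, so that by design the in-degree-$0$ vertices of $\Gamma_g$ are precisely the leaves.

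With the sources pinned down, the verification $\Aut(g)\cong\Aut(h,f)$ runs cleanly: any automorphism of $\Gamma_g$ preserves the source set $L$, hence preserves $\widehat E$ and restricts there to an element of $\Aut(\widehat\phi)\cong\Aut(\phi)$, say $\tau$; counting the sources pointing at each vertex forces $|L_e|=|L_{\tau(e)}|$, so $\tau\in\Aut'(\phi)$; and on the leaves over each $e$ the map is an arbitrary bijection onto the leaves over $\tau(e)$. This reproduces exactly the displayed semidirect product, so $g$ is a single self-map with $\Aut(g)\cong\Aut(h,f)$, as required. (Alternatively, once the semidirect-product shape is in hand one may simply invoke the classification of function centralizers --- \prref{pr:autgpdec} and the theorems following it --- to see that a group of this form is a single-function centralizer.)

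The step I expect to be genuinely delicate is the grafting in the middle paragraph: one must check that appending the rays neither fuses a $\phi$-source with a genuine interior vertex nor introduces spurious symmetries, so that $\Aut(\widehat\phi)\cong\Aut(\phi)$ holds on the nose. In the presence of pre-existing bare bi-infinite $\phi$-orbits this forces one to give the grafted rays a distinctive rigid decoration; everything else is routine bookkeeping with in-degrees and forward orbits.
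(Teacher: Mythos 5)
Your overall strategy is sound, and --- strikingly --- the ``naive reading'' you reject in your second paragraph is precisely the paper's own proof: the paper takes for $g$ the function agreeing with $f$ on $\im h$ and with $h$ on $X\setminus\im h$ and asserts that $\Aut(g)$ is the group in question. You are right that this fails. Concretely, take $X=\{a,b,c\}$ with $h(a)=a$, $h(b)=b$, $h(c)=a$ and $f$ the constant map $a$: then $\Aut(h,f)$ is trivial (commuting with $f$ forces $\sigma(a)=a$, and commuting with $h$ then forces $\sigma(b)=b$ and $\sigma(c)=c$), whereas the resulting $g$ is the constant map $a$, whose centralizer is $\Sigma_{\{b,c\}}\cong\ZZ/2$. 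The inclusion $\Aut(h,f)\le\Aut(g)$ always holds, but the reverse one does not, exactly because $\Gamma_g$ cannot distinguish a leaf of $h$ from a $\phi$-source. So your construction is not merely an alternative route: a repair of this kind is needed for the proposition to have a correct proof at all, and your reduction to the semidirect product $\bigl(\prod_{e}\Sigma_{L_e}\bigr)\rtimes\Aut'(\phi)$ is the right normal form to aim at.

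That said, the step you flag as delicate is indeed where the remaining work lies, and it is more delicate than your parenthetical about bi-infinite orbits suggests: grafting \emph{bare} backward rays can create new symmetries already for finite $\phi$. For instance, with $E=\{a,b,b',c\}$, $\phi(a)=\phi(b)=c$, $\phi(b')=b$, $\phi(c)=c$, the sources are $a$ and $b'$ and $\Aut(\phi)$ is trivial; after grafting bare rays into $a$ and into $b'$, the backward trees rooted at $a$ and at $b$ both become bare rays with roots mapping to $c$, and $\Aut(\widehat\phi)$ acquires a $\ZZ/2$ swapping them. The cure is available in the paper's own toolkit: instead of bare rays, graft trees $\Gamma_A$ associated to branching patterns with pairwise distinct cardinals, all exceeding every in-degree of $\Gamma_\phi$ and every $|L_e|$ and with no zero entries, as in \deref{def:branchingtree} and \prref{pr:manysameauto}. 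This pins $E$ down inside $\widehat E$, forces $\Aut(\widehat\phi)\cong\Aut(\phi)$, and keeps the leaves in $L$ as the only in-degree-zero vertices of $\Gamma_g$, after which your final verification goes through verbatim. Your parenthetical alternative of simply invoking \prref{pr:autgpdec} and its sequels does not work as stated, since $\bigl(\prod_e\Sigma_{L_e}\bigr)\rtimes\Aut'(\phi)$ is not literally a multi-wreath product in the sense of those results ($\Aut'(\phi)$ need not act as a full symmetric group on the orbits of fibres); the direct construction is the right way.
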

\begin{proof}
  Recasting the pair $(h,f)$ as an $\fse$ solution $R_{(h,f)}$ per \exref{exemFS} (\ref{item:fh}), the description of the automorphism group $\Aut(X,R_{(h,f)})$ in \thref{th:refqmbis} (\ref{item:automeansthis}) specializes (via \reref{re:exrevisited}) to
  \begin{equation*}
    \{\sigma\in \Aut(h)\ |\ \sigma\circ f|_{\im~h} = f\circ\sigma|_{\im~h}\}.
  \end{equation*}

  This is nothing but the centralizer of the function
  \begin{equation*}
    g(x):=
    \begin{cases}
      f(x)&\textrm{ if }x\in Y\\
      h(x)&\textrm{ if }x\in Z,
    \end{cases}
  \end{equation*}
  hence the conclusion.
\end{proof}

\begin{remark}\relabel{re:pb2}
  \coref{autfs}, \thref{th:isnotcentr} and \prref{pr:fhsinglefn} make it natural to wonder how broad the class of groups of the form $\Aut(X,R)$ is, for $\fse$ solutions $(X,R)$. The same could be asked, of course, of the set-theoretic Yang-Baxter equation.
\end{remark}

In analyzing groups of the form $\Aut(f)$ for single functions $f:X\to X$, it will be useful to observe that they decompose according to the partition of $X$ into connected components of $\Gamma_f$. Specifically:

\begin{proposition}\prlabel{pr:autgpdec}
  For a function $f:X\to X$ we have a decomposition
  \begin{equation}\eqlabel{eq:arbfconnf}
    \Aut(f)\cong \prod_i \left(\Aut(f_i)\wr \Sigma_{C_i}\right),
  \end{equation}
  where
  \begin{itemize}
  \item $X=\coprod_{s\in S} X_s$ is the disjoint union into connected components of $\Gamma:=\Gamma_f$;
  \item the partition
    \begin{equation*}
      S=\coprod_i C_i
    \end{equation*}
    is the coarsest so that for each $i$ all $\Gamma_{f|_{X_s}}$, $s\in C_i$ are mutually isomorphic;
  \item $f_i$ denotes any one of the restrictions $f|_{X_s}$, $s\in C_i$ (it does not matter which, since all are isomorphic);
  \item and `$\wr$' denotes the {\it wreath product} \cite[Chapter 7, preceding Theorem7.24]{rot_gp}.
  \end{itemize}
\end{proposition}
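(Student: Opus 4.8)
The plan is to reduce everything to graphs and then prove \equref{eq:arbfconnf} as the standard description of the automorphism group of a disjoint union of connected graphs. By \reref{re:fngrphs} we have $\Aut(f)\cong\Aut(\Gamma)$ for $\Gamma:=\Gamma_f$, so throughout I would work with the directed graph $\Gamma$ and its decomposition $\Gamma=\coprod_{s\in S}\Gamma_{f|_{X_s}}$ into connected components.

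First I would record that any $\sigma\in\Aut(\Gamma)$ permutes the connected components, since graph isomorphisms preserve connectedness; and because the restriction of $\sigma$ to a component $X_s$ is an isomorphism onto its image, $\sigma$ can only carry $X_s$ to a component isomorphic to it. Hence the induced permutation of $S$ preserves each block $C_i$ of the isomorphism-type partition setwise, and components in distinct blocks $C_i$, $C_j$ with $i\ne j$ can never be interchanged. This immediately gives the direct-product splitting $\Aut(\Gamma)\cong\prod_i\Aut\bigl(\coprod_{s\in C_i}X_s\bigr)$, because $\sigma$ preserves each $\coprod_{s\in C_i}X_s$ and is determined by, and may be prescribed independently through, its restrictions to these pieces. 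This reduces the statement to a single isomorphism class $C_i$ of mutually isomorphic components.

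Next, for a fixed $i$ I would choose a model component carrying the function $f_i$ and fix, for each $s\in C_i$, an isomorphism $\phi_s\colon\Gamma_{f|_{X_s}}\to(\text{model})$. An automorphism of $\coprod_{s\in C_i}X_s$ then consists of a permutation $\pi\in\Sigma_{C_i}$ of the components together with, for every $s$, a graph isomorphism $X_s\to X_{\pi(s)}$; conjugating the latter by the fixed $\phi_s$ and $\phi_{\pi(s)}$ turns it into an element $g_s\in\Aut(f_i)$. This produces a bijection sending $\sigma$ to the pair $\bigl((g_s)_{s\in C_i},\pi\bigr)$ in $\Aut(f_i)^{C_i}\rtimes\Sigma_{C_i}=\Aut(f_i)\wr\Sigma_{C_i}$, where, since $C_i$ may be infinite, the base group is the full (unrestricted) power $\Aut(f_i)^{C_i}$, i.e. we take the complete wreath product.

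The one point requiring genuine care — and the expected main obstacle — is checking that this bijection is a group homomorphism, so that composition of automorphisms matches the semidirect-product law of the wreath product. This is the familiar cocycle bookkeeping: composing $\sigma$ with $\sigma'$ multiplies the base permutations to $\pi\pi'$ and combines the coordinate data by the twisted rule (of the shape $g''_s=g_{\pi'(s)}\circ g'_s$, in whatever convention the identifications $\phi_s$ impose), which is exactly multiplication in $\Aut(f_i)\wr\Sigma_{C_i}$. Assembling these isomorphisms over all blocks $i$ via the direct-product splitting established above yields \equref{eq:arbfconnf}.
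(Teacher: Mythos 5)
Your proof is correct and follows essentially the same route as the paper's: pass to $\Aut(\Gamma_f)$, observe that automorphisms permute connected components within (and only within) each isomorphism class $C_i$ to get the direct-product splitting, and then identify each factor as a wreath product $\Aut(f_i)\wr\Sigma_{C_i}$. The only difference is that you spell out the identification via chosen isomorphisms $\phi_s$ and verify the semidirect-product composition law explicitly, details the paper leaves implicit.
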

\begin{proof}
  We observed before (in the proof of \thref{th:isnotcentr}) that we have $\Aut(f)\cong \Aut(\Gamma)$. An automorphism permutes the connected components $\Gamma_s$ of $\Gamma$ with respective vertex sets $X_s$, operating, by assumption, independently on the distinct $C_i$ (since no $\Gamma_s$, $s\in C_i$ is isomorphic to any $\Gamma_s'$, $s'\in C_j$ if $i\ne j$).

  This much already gives a product decomposition
  \begin{equation*}
    \Aut(f)\cong \prod_i \Aut(f|_{X_i}),
  \end{equation*}
  where
  \begin{equation*}
    X_i:=\coprod_{s\in C_i}X_s.
  \end{equation*}
  Now note that all connected components of $\Gamma_{f|_{X_i}}$ are isomorphic to $\Gamma_{f_i}$ (in our statement's notation). Having made all of these identifications, the decomposition
  \begin{equation*}
    \Aut(f|_{X_i}) \cong \Aut(f_i)\wr \Sigma_{\textrm{set of components of }\Gamma_{f|_{X_i}}} = \Aut(f_i)\wr \Sigma_{C_i}
  \end{equation*}
  follows.
\end{proof}

\begin{remark}\relabel{re:prins}
  Cf. the decomposition of the automorphism group of a finite tree as an iteration of products and wreath products in \cite[Theorem 4.2.1]{prins_phd}. It is not uncommon in older literature to refer to wreath products as {\it Kranz products} \cite[\S 4]{babai_planar-2}; in \cite[\S IV.1]{prins_phd} the term is {\it `Kranzgroup'}.
\end{remark}

In view of \prref{pr:autgpdec}, it makes some sense to focus our attention on automorphism groups of {\it connected} functional graphs. As shorthand for this phrase, we will sometimes refer to $f:X\to X$ itself as \emph{connected} if its underlying functional digraph $\Gamma_f$ is (\thref{th:conngrph}). Additionally, following \cite[\S 2]{harary}:

\begin{definition}\delabel{def:root}
  A {\it directed rooted tree} is a directed tree with a distinguished vertex, the {\it root}, so that all edges are oriented towards the root.

  In particular, the root is a {\it sink}: it is adjacent only to incoming edges, and no outgoing ones.
\end{definition}

\begin{remark}\label{re:isrooted}
  A functional digraph $\Gamma_f$ is a rooted tree precisely when $f$ is connected and has a unique fixed point (which will then be the root).

  We leave this to the reader as a simple exercise; although not stated in precisely this fashion, it also follows from the discussion surrounding \cite[Theorems 1 and 2]{harary}.
\end{remark}

The following result catalogs the various types of behavior that can be expected from $\Aut(f)$ for connected $f$.

\begin{theorem}\thlabel{th:autogp-types}
  Let $f:X\to X$ be a connected function. Exactly one of the following situations obtains:
  \begin{enumerate}[(a)]
  \item\label{item:cycle} $f$ has a finite orbit. In this case it has exactly one cyclic orbit, and $\Aut(f)$ is a wreath product of the form
    \begin{equation}\eqlabel{eq:wrzs}
      \Aut(f)\cong \left(\prod_{i=1}^t G_i\right)\wr \ZZ/s
    \end{equation}
    where $G_i$ are automorphism groups of directed rooted trees, $st$ is the size of the unique cyclic orbit of $f$, and the wreath product is with respect to a cyclic action of $\ZZ/s$ on a size-$s$ set.

  \item\label{item:nofix} There are no finite orbits, but there are elements of $\Aut(f)$ fixing no points of $X$. In this case we have
    \begin{equation}\eqlabel{eq:wrz}
      \Aut(f)\cong \left(\prod_{i=1}^t G_i\right)\wr \ZZ,
    \end{equation}
    where $G_i$ are automorphism groups of directed rooted trees and the wreath product is with respect to the translation action of $\ZZ$ on itself.

  \item\label{item:everyonefix} There are no finite orbits and every element of $\Aut(f)$ fixes some point. In this case $\Aut(f)$ is a countable union of countable products of directed-rooted-tree automorphism groups.

  \end{enumerate}
\end{theorem}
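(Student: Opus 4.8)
The plan is to read off the trichotomy directly from the shape of the connected digraph $\Gamma_f$ (recall $\Aut(f)\cong\Aut(\Gamma_f)$ by \reref{re:fngrphs}), using Harary's classification of functional digraphs \cite[Theorems 1 and 2]{harary}: a connected $\Gamma_f$ either contains a cycle, necessarily unique, or none, and in the latter case its underlying undirected graph is a tree. By condition (\ref{item:15}) of \thref{th:conngrph} the alternative ``contains a cycle / is a tree'' is exactly ``has a finite forward orbit / has none'', and within the tree case I split further according to whether some element of $\Aut(f)$ is fixed-point-free. This produces the mutually exclusive cases (\ref{item:cycle})--(\ref{item:everyonefix}), and it remains to compute $\Aut(f)$ in each.

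For case (\ref{item:cycle}) the unique cycle $C$, of length $N$, is preserved by every automorphism, which therefore acts on it by a rotation. Each cycle vertex $c_j$ carries a hanging directed rooted tree $R_j$ (its backward tree minus $C$), and a rotation by $k$ extends to an automorphism exactly when $R_j\cong R_{j+k}$ for all $j$. The admissible $k$ form a subgroup of $\ZZ/N$, hence $\langle t\rangle$ for the minimal period $t$ of the sequence $(R_j)$; with $N=st$ this is $\ZZ/s$. Grouping $C$ into $s$ blocks of $t$ consecutive trees and putting $G_i:=\Aut(R_i)$ for $i=1,\dots,t$, the automorphisms act as independent tree-automorphisms on the $s$ blocks, permuted cyclically by the rotations, which is \equref{eq:wrzs}.

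For the two tree cases I would introduce the \emph{forward displacement} homomorphism $\phi\colon\Aut(f)\to\ZZ$. Fixing a base point $x_0$ and using that all forward orbits merge, for $\tau\in\Aut(f)$ there are $a,b$ with $\tau(f^a x_0)=f^b x_0$; since $\tau$ commutes with $f$ this forces $\tau(f^m x_0)=f^{m+\phi(\tau)}x_0$ for all large $m$, where $\phi(\tau):=b-a$, and $\phi$ is a well-defined homomorphism. Its kernel $N$ consists of the automorphisms fixing some $f^m x_0$; as fixing $f^m x_0$ forces fixing $f^{m'}x_0$ for all $m'\ge m$, one gets $N=\bigcup_m \mathrm{Stab}(f^m x_0)$, an increasing countable union, and each $\mathrm{Stab}(f^m x_0)$ decomposes as the automorphism group of the (directed rooted) backward tree at $f^m x_0$ times the automorphism groups of the rooted trees hanging off the forward ray beyond it. In case (\ref{item:everyonefix}) every automorphism fixes a point, hence a point on the forward ray, so $\phi\equiv 0$ and $\Aut(f)=N$ is precisely this countable union of countable products of directed-rooted-tree groups.

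In case (\ref{item:nofix}) a fixed-point-free $\tau$ translates along a bi-infinite geodesic (its axis, or \emph{spine}) by $\phi(\tau)\ne 0$, so $\mathrm{Im}\,\phi=t\ZZ\cong\ZZ$; a minimal translation provides a section and $\Aut(f)\cong N\rtimes\ZZ$. The decisive point, and the step I expect to be the main obstacle, is to show that the spine is \emph{canonical}, i.e. preserved by every automorphism. Once this is known, each $\mathrm{Stab}(f^m x_0)$ fixes the spine pointwise and therefore equals the full product $\prod_n \Aut(Q_n)$ over the rooted trees $Q_n$ hanging off the spine, independently of $m$; the increasing union of case (\ref{item:everyonefix}) then collapses to this single product, the $\ZZ$-translation permutes its factors, and with $t$ the minimal period of $(Q_n)$ and $G_i:=\Aut(Q_i)$ one obtains the clean wreath product \equref{eq:wrz}. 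The delicate part is exactly leveraging fixed-point-freeness to rule out automorphisms that reroute the spine through a hanging branch; the sequence $(Q_n)$ is automatically $\phi(\tau)$-periodic, so the whole difficulty concentrates in this canonicity/collapse statement.
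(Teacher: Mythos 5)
Your overall route is the paper's own: the trichotomy via presence or absence of a cycle in $\Gamma_f$ followed by (non)triviality of the displacement homomorphism (which is exactly the paper's amplitude map \equref{eq:amp}), the identification of its kernel with an increasing union of vertex stabilizers, each a product of directed-rooted-tree automorphism groups, and the wreath-product assembly along the unique cycle in case (\ref{item:cycle}). Your treatments of cases (\ref{item:cycle}) and (\ref{item:everyonefix}) are complete and match the paper's, with somewhat more detail than the paper supplies on why the group of admissible rotations of the cycle is $\ZZ/s$.

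In case (\ref{item:nofix}), however, you have not proved the statement: you reduce everything to the claim that the spine (the axis of a minimal translation) is preserved by every automorphism, label it ``the main obstacle,'' and stop. That is a genuine gap, and moreover the claim as you state it is false. Take $f$ to be ``step toward a fixed end $\omega$'' on the $3$-regular tree: this $f$ is connected, has no finite orbits, and admits length-$1$ translations, so it falls under case (\ref{item:nofix}) with $t=1$; but every spine vertex $x_i$ has a second preimage whose backward tree is isomorphic to that of $x_{i-1}$ (both are the complete binary backward tree), so there are amplitude-zero automorphisms rerouting the spine, and $\ker(\amp)$ is then a \emph{strictly} increasing union of the stabilizers $\mathrm{Stab}(x_m)$ rather than the single product $\prod_{i\in\ZZ}\Aut(Q_i)$ your argument requires. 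The collapse therefore cannot be obtained by proving canonicity of the spine; a correct argument must either identify the $G_i$ differently or produce the isomorphism \equref{eq:wrz} by some other means. You should also be aware that the paper's own proof is no more forthcoming at this point: it invokes \cite[Proposition 3.2]{tits_arbre} only for the uniqueness of the axis of the one chosen translation $\alpha$ and then asserts the decomposition ``as in the preceding case,'' without addressing automorphisms that move that axis. So you have located precisely the soft spot of the published argument --- but locating it is not the same as filling it, and as submitted your case (\ref{item:nofix}) is incomplete.
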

\begin{proof}
  The graph $\Gamma_f$ either does or does not contain an oriented cycle, and if it does that cycle is unique (because once entered it cannot be escaped, contradicting connectedness and hence condition (\ref{item:15}) of \thref{th:conngrph}). Moreover, a cycle exists precisely when there are finite orbits, for any finite orbit will eventually enter a cycle. This gives the dichotomy between (\ref{item:cycle}) on the one hand and (\ref{item:nofix}) and (\ref{item:everyonefix}) on the other. We discuss the two cases separately (as in the proof of \thref{th:isnotcentr}).

  \begin{enumerate}[]
  \item {\bf Case (\ref{item:cycle}): $\Gamma_f$ has a cycle.} Suppose that cycle has length $n$:
    \begin{equation*}
      x_0\xrightarrow{\ f\ }x_1\xrightarrow{\ f\ }\cdots\xrightarrow{\ f\ } x_{n-1}\xrightarrow{\ f\ } x_0
    \end{equation*}
    (this includes the case $n=1$, meaning $f$ has a fixed point). The automorphism group $\Aut(f)$ operates on that cycle preserving the orientation, hence as a cyclic group of some order $s|n$. Setting $t:=\frac ns$, the $G_i$ of \equref{eq:wrzs} are the automorphism groups of the functional rooted trees associated to the restrictions of $f$ to the backward orbits
    \begin{equation*}
      \Oo_{-,x_i} = \{x_i\}\cup f^{-1}x_i\cup\cdots\textrm{ for } 0\le i\le t-1.
    \end{equation*}

  \item {\bf Cases (\ref{item:nofix}) and (\ref{item:everyonefix}): $\Gamma_f$ has no cycles.} It must then be a functional tree with no sinks (for a sink would be a fixed point, for $f$, which we are assuming does not exist).

    Regarding $\ZZ$ as the vertex set of a bi-infinite oriented path, there is a digraph morphism $|\cdot|:\Gamma_f\to \ZZ$ defined as follows:
    \begin{itemize}
    \item set $|x_0|=0$ for a fixed $x_0\in X$;
    \item for any $x\in X$, having found $m,n\in \ZZ_{\ge 0}$ with $f^m x_0=f^n x$ (which we always can, $f$ being connected: \thref{th:conngrph}), set
      \begin{equation*}
        |x|:=m-n.
      \end{equation*}
    \end{itemize}
    We leave it to the reader to check that this map is well defined, and further allows us to define a group morphism
    \begin{equation}\eqlabel{eq:amp}
      \Aut(f)\ni \alpha\xmapsto{\quad\amp\quad}|\alpha(x)|-|x|\in (\ZZ,+),
    \end{equation}
    independent of $x\in X$ (`$\amp$' for `amplitude' \cite[\S 2.2]{tits_arbre}). An automorphism $\alpha\in \Aut(f)$ fixes some point precisely when $\amp(\alpha)=0$, so the distinction between cases (\ref{item:everyonefix}) and (\ref{item:nofix}) is precisely that between $\amp$ being trivial and, respectively, not.

    Suppose first that we are in case (\ref{item:nofix}), i.e. \equref{eq:amp} is {\it not} trivial. Its image is then some $t\ZZ$, $t\in \ZZ_{>0}$, so choose some
    \begin{equation*}
      \alpha\in \Aut(f)\quad\textrm{with}\quad \amp(\alpha)=t.
    \end{equation*}
    It follows (e.g. \cite[Proposition 3.2, case (iii)]{tits_arbre}) that $\alpha$ must operate as a translation $x_i\xmapsto{\quad} x_{i+t}$, $i\in \ZZ$ of length $t$ along a unique bi-infinite oriented path
    \begin{equation*}
      \cdots \xrightarrow{\ f\ } x_{-1}
      \xrightarrow{\ f\ } x_0
      \xrightarrow{\ f\ } x_1
      \xrightarrow{\ f\ } \cdots.
    \end{equation*}
    As in the preceding case, the $G_i$ of \equref{eq:wrz} will then be the automorphism groups of the functional rooted trees associated to the restrictions $f|_{\Oo_{-,x_i}}$ to the backward orbits of $x_i$, $0\le i\le t-1$.

    On the other hand, suppose $\amp\equiv 0$ so that every automorphism of $f$ fixes {\it some} $x\in X$ (case (\ref{item:everyonefix}), in short). For any $x\in X$ we have
    \begin{equation*}
      \Aut(f) = \bigcup_{n\in \ZZ_{>0}}\textrm{Fix}(f^nx)
      \quad\textrm{where}\quad
      \textrm{Fix}(y):=\{\alpha\in \Aut(f)\ |\ \alpha(y)=y\}.
    \end{equation*}
    The removal from $\Gamma_f$ of the edges (but not the vertices) along the path
    \begin{equation}\eqlabel{eq:yfyfy}
      y\xrightarrow{\quad}fy\xrightarrow{\quad}f^2y\xrightarrow{\quad}\cdots
    \end{equation}
    leaves behind a countable union of directed rooted trees, with the vertices \equref{eq:yfyfy} as their respective roots. $\textrm{Fix}(y)$ is nothing but the product of the automorphism groups of said rooted trees, hence the conclusion.  \qedhere

  \end{enumerate}
\end{proof}

The justification given above for focusing on automorphism groups of {\it connected} functions was that those of arbitrary functions can be assembled from the former according to the scheme \equref{eq:arbfconnf}: products of wreath products. We will now strengthen that remark with the observation that the freedom in effecting that assemblage is absolute: given the (connected) $f_i$, there is some $f$ satisfying \equref{eq:arbfconnf} for {\it arbitrary} symmetric groups $\Sigma_{C_i}$.

\begin{theorem}\thlabel{th:arbprod}
  Let $G_i$, $i\in I$ be centralizers of connected functions and $\Sigma_i$ symmetric groups on respective sets $C_i$. The group
  \begin{equation}\eqlabel{eq:genwr}
    G:=\prod_{i\in I} G_i\wr \Sigma_i
  \end{equation}
  is then a function centralizer.
\end{theorem}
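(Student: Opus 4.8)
The plan is to realize $G$ as the automorphism group of a single self-map assembled from the given data, and then read off its structure via \prref{pr:autgpdec}. For each $i\in I$ fix a connected function $g_i\colon Y_i\to Y_i$ with $\Aut(g_i)\cong G_i$ (possible by hypothesis), and let $f$ be the self-map on $X:=\coprod_{i\in I}\coprod_{c\in C_i}Y_i$ acting as $g_i$ on each copy indexed by $(i,c)$. The connected components of $\Gamma_f$ are exactly these copies, the components indexed by a fixed $i$ are mutually isomorphic, and \prref{pr:autgpdec} yields $\Aut(f)\cong\prod_i\bigl(\Aut(g_i)\wr\Sigma_{C_i}\bigr)=G$ --- provided the components attached to distinct indices $i$ are non-isomorphic, i.e. provided the $g_i$ are pairwise non-isomorphic connected functions. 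So the whole statement reduces to arranging this.

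Thus I would prove the following replacement lemma: any connected function $g$ can be replaced by a connected function $g^{+}$ with $\Aut(g^{+})\cong\Aut(g)$, with enough freedom to make a whole $I$-indexed family of such $g_i^{+}$ pairwise non-isomorphic. The mechanism is \emph{grafting tags}: fix a rigid (i.e. automorphism-free) directed rooted tree $T$ in the sense of \deref{def:root}, and form $g^{+}$ by attaching, at \emph{every} vertex $y\in Y$, a fresh copy of $T$ whose root maps to $y$. Grafting only adds incoming tree-branches, so $g^{+}$ stays connected and its orbit and cycle structure (hence its case in \thref{th:autogp-types}) is unchanged. Treating every vertex identically makes $\Aut(g)\hookrightarrow\Aut(g^{+})$ transparent; choosing $T$ from the supply below so that $T$ is isomorphic to none of the (at most $|Y|$-many) backward-orbit branch subtrees already present in $\Gamma_g$ prevents any grafted tag from being swapped with a pre-existing branch, and this is what should force the reverse inclusion. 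Finally, assigning to distinct $i\in I$ pairwise non-isomorphic tags $T_i$ makes the $g_i^{+}$ pairwise non-isomorphic, since carrying a $T_i$-branch at each original vertex is an isomorphism invariant.

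The remaining ingredient is a sufficiently rich supply of tags: for an arbitrary cardinal $\kappa$ there must exist at least $\kappa$ pairwise non-isomorphic rigid directed rooted trees. I would build these by a doubling recursion. The directed paths $u_k\to u_{k-1}\to\cdots\to u_0$ of length $k\ge 1$ form a countable family of rigid rooted trees, pairwise non-isomorphic. Given any family $\mathcal F$ of $\mu$ pairwise non-isomorphic rigid rooted trees and any subset $A\subseteq\mathcal F$, let $T_A$ be the rooted tree obtained by joining a new root to the roots of all trees in $A$; since the members of $A$ are pairwise non-isomorphic, $T_A$ is again rigid and $A\mapsto T_A$ is injective up to isomorphism, producing $2^{\mu}$ pairwise non-isomorphic rigid rooted trees. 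Iterating from $\mu=\aleph_0$ reaches arbitrarily large cardinalities, so tags can always be chosen pairwise non-isomorphic, $I$-indexed, and avoiding the branch subtrees of each $g_i$.

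The main obstacle I anticipate is the exact identity $\Aut(g^{+})\cong\Aut(g)$: the forward inclusion and the pairwise non-isomorphism are routine, but the reverse inclusion requires recognizing the original vertex set $Y$ intrinsically inside $\Gamma_{g^{+}}$ and ruling out automorphisms that mix grafted tags with genuine branches. The branch-avoidance choice of $T$ is designed precisely to make the grafted branch at each vertex the unique branch of its isomorphism type, which pins $Y$ down and blocks such mixing; carrying this verification out carefully, case by case along the trichotomy of \thref{th:autogp-types}, is where the real work lies.
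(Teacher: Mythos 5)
Your overall architecture is exactly the paper's: reduce via \prref{pr:autgpdec} to producing pairwise non-isomorphic connected functions with the prescribed centralizers, and obtain these by grafting a rigid directed rooted ``tag'' tree at every vertex (your tags are precisely the trees $\Gamma_A$ of \deref{def:branchingtree}, and your replacement lemma is the paper's \prref{pr:manysameauto}). Your supply of rigid rooted trees via the power-set/doubling recursion is correct, if different in flavor from the paper's branching patterns with distinct cardinals. The problem is the one step you yourself flag as ``where the real work lies'': the reverse inclusion $\Aut(g^{+})\le\Aut(g)$. As stated, your sufficient condition does not work, for two reasons. First, you require $T$ to avoid the backward-orbit branch subtrees of $\Gamma_g$, but the branches an automorphism of $\Gamma_{g^{+}}$ can see are those of $\Gamma_{g^{+}}$, and every pre-existing branch has itself been altered by the grafting; avoiding the branches of $\Gamma_g$ does not rule out that some \emph{exploded} branch becomes isomorphic to $T$. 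Second, and more basically, nothing in your condition lets you recognize the original vertex set $Y$ intrinsically inside $\Gamma_{g^{+}}$: a vertex sitting \emph{inside} a grafted tag may itself carry a branch isomorphic to $T$ (this happens, e.g., for path-like tags, where every vertex of $T$ has a sub-branch isomorphic to $T$), so ``carries a $T$-branch'' is not a priori an invariant that singles out $Y$, and without that you cannot conclude that automorphisms of $\Gamma_{g^{+}}$ restrict to $Y$. The same unproved recognizability is silently used again when you argue that the $g_i^{+}$ are pairwise non-isomorphic.

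The paper closes exactly this hole by a cardinality device rather than an isomorphism-avoidance one: the tag tree is $\Gamma_A$ for a branching pattern $A$ whose constituent cardinals are all distinct, with the root's in-degree $\alpha_0$ strictly the largest and also larger than every in-degree occurring in $\Gamma_g$. Then in $A\triangleright\Gamma_g$ the vertices of in-degree $\alpha_0$ are precisely the original ones, so every automorphism preserves $Y$ for free; the distinctness of the remaining cardinals makes each tag rigid and uniquely matchable to any other tag, giving $\Aut(A\triangleright\Gamma_g)=\Aut(g)$ with no case analysis along \thref{th:autogp-types} needed. Your argument becomes correct if you replace (or supplement) your branch-avoidance condition with this in-degree separation — e.g., insist that the root of each tag $T_i$ has in-degree a cardinal exceeding all in-degrees of $\Gamma_{g_i}$ and all in-degrees occurring elsewhere in $T_i$; the well-foundedness of your recursively built tags alone is not enough.
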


The idea will be to
\begin{itemize}
\item start with connected functional graphs $\Gamma_i$ with $G_i\cong \Aut(\Gamma_i)$;
\item alter them into new graphs $\Gamma'_i$ so as to ensure no two (for distinct $i\ne j\in I$) are isomorphic;
\item taking care not to disturb the individual automorphism groups in the process;
\item and then recover $G$ as the automorphism group of the disjoint-union graph
  \begin{equation*}
    \Gamma:=\coprod_{i\in I}\left(\textrm{$|C_i|$ copies of $\Gamma'_i$}\right).
  \end{equation*}
\end{itemize}

This will require that we formalize some of the constructions involved and introduce some additional terminology.

\begin{definition}\delabel{def:branch}
  A {\it branching pattern} is a sequence $\left(A_n\right)_{n\in \ZZ_{\ge 0}}$ of multisets of cardinal numbers, wherein the terms are recursively compatible on the following sense:
  \begin{itemize}
  \item $A_0=(\alpha_0)$ consists of a single cardinal number;
  \item the multiset
    \begin{equation*}
      A_1=(\alpha_{1,i}\ |\ i\in \alpha_0)
    \end{equation*}
     consists of $\alpha_0$ cardinal numbers;
   \item the members of the next multiset
     \begin{equation*}
      A_2=\left(\alpha_{2,j}\ \bigg|\ j\in \coprod_{i}\alpha_{1,i}\right)
    \end{equation*}
    are in bijection with the disjoint union of the cardinals which constitute the preceding multiset $A_1$, and so on.
  \end{itemize}
  For the purpose of referring to a generic $\alpha_{n,i}$, it will occasionally be convenient to write $\alpha_0$ as $\alpha_{0,0}$ (so the notation $\alpha_{n,i}$ covers all cardinals).
\end{definition}

To return to trees:

\begin{definition}\delabel{def:branchingtree}
  Let $A=(A_n)$ be a branching pattern in the sense of \deref{def:branch}.
  \begin{enumerate}[(1)]
  \item The directed rooted tree $\Gamma_A$ {\it associated to $A=(A_n)$} is constructed inductively as follows:
    \begin{itemize}
    \item the root has $\alpha_0$ incoming edges;
    \item the source of edge $i$, $i\in \alpha_0$ itself has $\alpha_{1,i}$ incoming edges;
    \item the source of edge $j$, $j\in \alpha_{1,i}$ has $\alpha_{2,j}$ incoming edges, etc.
    \end{itemize}
  \item\label{item:explos} For a directed graph $\Gamma$, the {\it $A$-explosion} $A\triangleright \Gamma$ of $\Gamma$ is obtained by attaching a copy of the directed rooted tree $\Gamma_A$ at every vertex of $\Gamma$, with that vertex as the root, and keeping all existing edges (in either the original $\Gamma$ or the various copies of $\Gamma_A$ being attached) unchanged.
  \end{enumerate}
\end{definition}

The $A$-explosion procedure now affords us a method to duplicate automorphism groups without accidentally producing isomorphic graphs.

\begin{proposition}\prlabel{pr:manysameauto}
  For any directed graph $\Gamma$, there are arbitrarily large sets of mutually non-isomorphic directed graphs, all with automorphism groups $\Aut(\Gamma)$.

  Furthermore, if $\Gamma$ is functional, or a functional tree, or a directed rooted tree then said graphs can also be chosen so as to have these properties, respectively.
\end{proposition}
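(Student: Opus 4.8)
The plan is to realise all the required graphs as $A$-explosions $A\triangleright\Gamma$ (\deref{def:branchingtree}) for a suitably chosen family of branching patterns $A$. For any prescribed cardinal $\kappa$ I will build a family $(A_\lambda)_{\lambda\in\Lambda}$ with $|\Lambda|=\kappa$ such that (i) each associated tree $\Gamma_{A_\lambda}$ is \emph{rigid}, i.e.\ $\Aut(\Gamma_{A_\lambda})=1$; (ii) the $\Gamma_{A_\lambda}$ are pairwise non-isomorphic; and (iii) every $A_\lambda$ has one and the same ``oversized root''. The graphs $\Gamma_\lambda:=A_\lambda\triangleright\Gamma$ will then be the desired ones.

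For the construction, fix a cardinal $\mu$ strictly larger than $\kappa$, than the number of vertices of $\Gamma$, and than every in-degree occurring in $\Gamma$. I take each branching pattern so that its root has exactly $\mu$ incoming edges (so $\alpha_0=\mu$ in the notation of \deref{def:branch}), the $\mu$ first-level children being the roots of pairwise non-isomorphic \emph{rigid} rooted trees, all of whose branching numbers are $<\mu$; such trees exist in abundance (for instance the well-founded trees coding the ordinals $<\mu$). This choice makes $\Gamma_A$ rigid: its root is the unique sink (\deref{def:root}), an automorphism must fix it and then permute the first-level subtrees among themselves, but these are pairwise non-isomorphic and individually rigid, so only the identity survives. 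Letting the multiset of first-level subtrees vary produces the required $\kappa$ pairwise non-isomorphic patterns $A_\lambda$.

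The heart of the argument is to show $\Aut(A\triangleright\Gamma)\cong\Aut(\Gamma)$, and here the oversized root does the work. In $\Gamma':=A\triangleright\Gamma$ an original vertex $v\in\Gamma$ has in-degree $\deg^-_\Gamma(v)+\mu\ge\mu$, whereas every newly added tree-vertex has in-degree $<\mu$ by construction; hence
\[
  V(\Gamma)=\{w\in V(\Gamma')\ :\ \deg^-_{\Gamma'}(w)\ge\mu\},
\]
which is an isomorphism invariant. Consequently any $\phi\in\Aut(\Gamma')$ preserves $V(\Gamma)$ and restricts to an automorphism of $\Gamma$; since the unique out-path from a tree-vertex terminates at the root of the copy of $\Gamma_A$ containing it, $\phi$ carries each attached copy $\Gamma_A^{(v)}$ onto $\Gamma_A^{(\phi(v))}$, and rigidity of $\Gamma_A$ forces this to be the canonical identification. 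Conversely each automorphism of $\Gamma$ extends uniquely by transporting the identical rigid decorations, and the two assignments are mutually inverse group isomorphisms.

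It remains to check non-isomorphism and the ``furthermore''. An isomorphism $\Gamma_\lambda\cong\Gamma_{\lambda'}$ again preserves the invariant set $V(\Gamma)$, hence maps some attached copy of $\Gamma_{A_\lambda}$ onto one of $\Gamma_{A_{\lambda'}}$, forcing $\Gamma_{A_\lambda}\cong\Gamma_{A_{\lambda'}}$ and so $\lambda=\lambda'$. Finally, since the explosion attaches rooted trees \emph{by incoming edges only}, it never raises an out-degree, never creates a loop, and never disconnects anything: a directed rooted tree has all out-degrees $\le 1$, so $\Gamma'$ stays functional (in the sense of \reref{re:fngrphs}) when $\Gamma$ is; it stays a connected acyclic tree when $\Gamma$ is a functional tree; and it keeps the root of $\Gamma$ as its unique sink with every edge oriented towards it when $\Gamma$ is a directed rooted tree. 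I expect the main obstacle to be exactly the recognisability of $V(\Gamma)$ inside $\Gamma'$ — without the oversized-root device an automorphism could in principle confuse original vertices with decoration vertices — together with arranging that device so as to remain compatible with each of the three structural properties in the ``furthermore'' clause.
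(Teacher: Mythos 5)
Your argument is correct and follows the paper's own proof in all essentials: both realize the required graphs as $A$-explosions $A\triangleright\Gamma$ whose oversized root in-degree makes $V(\Gamma)$ recognizable inside the explosion and whose rigidity forces every automorphism of $A\triangleright\Gamma$ to arise uniquely from one of $\Gamma$. The only (cosmetic) difference is in how rigidity and pairwise non-isomorphism of the decorating trees are secured --- the paper takes all the cardinals $\alpha_{n,i}$ distinct within a pattern and uses disjoint ranges of cardinals for different patterns, whereas you fix one large $\mu$ and vary a multiset of pairwise non-isomorphic rigid subtrees of small branching --- and both devices work.
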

\begin{proof}
  We tackle the functional case directly, as the construction will plainly be extensible to the general setup and preserve the properties of being a functional or directed rooted tree.

  Let $\Gamma$ be a functional digraph, and consider a branching pattern $A=(A_n)$ such that
  \begin{itemize}
  \item the cardinals $\alpha_{n,i}\in A_n$, for $n$ and $i$ varying freely, are all distinct;
  \item $\alpha_0$ is the largest of them;
  \item and furthermore, $\alpha_{n,i}$ are all larger than the in-degree of any vertex of $\Gamma$.
  \end{itemize}
  We claim, now, that the automorphism group of the $A$-explosion $A\triangleright \Gamma$ equals that of the original $\Gamma$.

  First, by construction (since in constructing $A\triangleright\Gamma$ we are gluing a copy of the same graph $\Gamma_A$ to every vertex of $\Gamma$), $\Aut(\Gamma)$ does act (faithfully) on $A\triangleright\Gamma$. We have, in other words, an obvious embedding
  \begin{equation*}
    \Aut(\Gamma)\le \Aut(A\triangleright\Gamma).
  \end{equation*}
  Conversely, we claim that every automorphism of $A\triangleright\Gamma$ arises by extending one of the original $\Gamma$. To see this, note that the edges in $\Gamma\subset A\triangleright \Gamma$ are precisely those whose sources have in-degree $\alpha_0$: all other edges belong to some copy of $\Gamma_A$ glued to $\Gamma$, and by assumption those edges' sources all have in-degrees
  \begin{equation*}
    \alpha_{n,i}<\alpha_0\textrm{ for some }n\in \ZZ_{>0}.
  \end{equation*}
  This means that $\Aut(A\triangleright\Gamma)$ leaves $\Gamma$ invariant, and hence every automorphism of $A\triangleright\Gamma$ must extend one of $\Gamma$. That extension is furthermore unique, because the fact that no two $\alpha_{n,i}$ are equal ensures that every copy of $\Gamma_A$ attached to $\Gamma$ is {\it uniquely} identifiable with any other copy.

  To conclude, note that we can now find arbitrarily large sets of branching patterns $A^{\kappa}$ satisfying the conditions above, so that no two $A^{\kappa}\triangleright\Gamma$ are mutually isomorphic: well-order the arbitrarily large set $\{\kappa\}$, and ask that for $\kappa<\kappa'$ every constituent cardinal $\alpha^k_{n,i}$ of $A^{\kappa}$ be smaller than every constituent cardinal $\alpha^{\kappa'}_{m,j}$ of $A^{\kappa'}$.
\end{proof}

\begin{proof}[Proof of \thref{th:arbprod}]
  We can now pursue the plan outlined after the statement of the theorem, using \prref{pr:manysameauto} as a tool: write each $G_i$ as the centralizer of a function $f_i:X_i\to X_i$ with $f_i$ not isomorphic to $f_j$ for $i\ne j$; this is possible even if $G_i$ and $G_j$ are isomorphic, by \prref{pr:manysameauto}.

  Next, consider the function $f:X\to X$ obtained by gluing together copies of the $f_i$ in the obvious fashion, where
  \begin{equation*}
    X:=\coprod_i X_i^{\coprod C_i}
  \end{equation*}
  and $Y^{\coprod Z}$ denotes the disjoint union of $|Z|$ copies of $Y$.
\end{proof}

In the same spirit (as in \thref{th:arbprod}), and using the same methods, we can show that the groups of the form \equref{eq:wrzs} or \equref{eq:wrz} are {\it precisely} those arising as in parts (\ref{item:cycle}) and (\ref{item:nofix}) of \thref{th:autogp-types} respectively.

\begin{theorem}\thlabel{th:allwreathcyclic}
  Let $G_i$, $1\le i\le t$ be a finite family of directed-rooted-tree automorphism groups.
  \begin{enumerate}[(1)]

  \item\label{item:anywrzs} The wreath product
    \begin{equation*}
      \left(\prod_{i=1}^t G_i\right)\wr \ZZ/s
      \quad \textrm{with $\ZZ/s$ acting cyclically on a size-$s$ set}\quad
    \end{equation*}
    is a function centralizer, as in \thref{th:autogp-types} (\ref{item:cycle}).

  \item\label{item:anywrz} The wreath product
    \begin{equation*}
      \left(\prod_{i=1}^t G_i\right)\wr \ZZ
      \quad \textrm{with $\ZZ$ acting on itself by translations}\quad
    \end{equation*}
    is a function centralizer, as in \thref{th:autogp-types} (\ref{item:nofix}).

  \end{enumerate}
\end{theorem}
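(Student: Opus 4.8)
The plan is to realize each of the two wreath products as the automorphism group $\Aut(f)$ of an explicitly built connected function $f$, and then to read off its isomorphism type from \thref{th:autogp-types}. The common first step is to fix, for each $1\le i\le t$, a directed rooted tree realizing $G_i$ as its automorphism group, and then to separate these trees from one another. Concretely, I write $G_i\cong \Aut(S_i)$ for directed rooted trees $S_i$ and apply \prref{pr:manysameauto} to each $S_i$; since that proposition (through its $A$-explosion construction) lets one prescribe the branching cardinals, I may draw these from pairwise-disjoint ranges for distinct $i$, thereby obtaining directed rooted trees $T_1,\dots,T_t$ that are mutually non-isomorphic and satisfy $\Aut(T_i)\cong G_i$. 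Having the $T_i$ distinct is what will force the period of the forthcoming decoration to be genuinely equal to $t$.

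For part (\ref{item:anywrzs}) I form the directed graph $\Gamma$ consisting of a single oriented cycle $x_0\to x_1\to\cdots\to x_{st-1}\to x_0$ of length $st$, with a copy of $T_{i\bmod t}$ attached (root glued) at each $x_i$, and let $f$ be the associated function. Then $f$ is connected and has a finite orbit (the cycle), so \thref{th:autogp-types}~(\ref{item:cycle}) applies. The cycle is the unique oriented cycle of $\Gamma_f$ (as noted in that theorem's proof), hence invariant under every automorphism, and the induced action on it is by orientation-preserving rotations. A rotation preserves the decoration exactly when it shifts the index by a multiple of $t$ (here the mutual non-isomorphism of $T_0,\dots,T_{t-1}$ is used), so the image of $\Aut(f)$ in the rotation group is precisely the order-$s$ cyclic subgroup. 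Since the rooted trees hanging off the cycle are exactly the $T_i$, the decomposition of \thref{th:autogp-types}~(\ref{item:cycle}) specializes to $\Aut(f)\cong\left(\prod_{i=1}^t G_i\right)\wr \ZZ/s$, as desired.

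For part (\ref{item:anywrz}) I replace the cycle by a bi-infinite oriented spine $\cdots\to x_{-1}\to x_0\to x_1\to\cdots$, attach $T_{j\bmod t}$ at each $x_j$, and let $f$ be the associated function. Now $f$ is connected with no finite orbits, and the period-$t$ shift $\tau\colon x_j\mapsto x_{j+t}$ (carried along on the periodic decoration) is a well-defined automorphism fixing no point, with amplitude $\amp(\tau)=t$. Thus \thref{th:autogp-types}~(\ref{item:nofix}) applies and yields $\Aut(f)\cong\left(\prod_{i=1}^{t'}G_i''\right)\wr\ZZ$, where $t'$ is the least positive value of the amplitude homomorphism $\amp\colon\Aut(f)\to\ZZ$ of \equref{eq:amp} and the $G_i''$ are the automorphism groups of the rooted trees hanging off the chosen axis. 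It remains to identify $t'=t$ and to recognize those trees as $T_1,\dots,T_t$.

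This last identification is the main obstacle: when the $T_i$ have infinite depth the spine is \emph{not} the unique bi-infinite path in $\Gamma_f$ (a deep tree vertex also lies on one), so it is not a priori clear that automorphisms preserve it. The key point is that $\amp$ is a genuine homomorphism to the abelian group $\ZZ$ with $\amp^{-1}(0)$ equal to the elliptic (point-fixing) automorphisms; by the standard rigidity for translation lengths on trees (\cite[\S 2.2, Proposition 3.2]{tits_arbre}), all hyperbolic elements then share a single axis $L$, and $L$ is $\Aut(f)$-invariant because conjugation preserves $\amp$ and hence sends axes to axes. As $\tau$ is hyperbolic with axis equal to the spine, we get $L=\mathrm{spine}$, so every automorphism preserves the spine and acts on it by a translation of length $\amp(\cdot)$. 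Such a translation preserves the decoration iff its length lies in $t\ZZ$ (again using that $T_0,\dots,T_{t-1}$ are distinct), whence $\mathrm{image}(\amp)=t\ZZ$, i.e. $t'=t$, and the trees hanging off $L=\mathrm{spine}$ at $x_0,\dots,x_{t-1}$ are $T_1,\dots,T_t$. This gives $\Aut(f)\cong\left(\prod_{i=1}^t G_i\right)\wr\ZZ$ with $\ZZ$ acting by translation, completing part (\ref{item:anywrz}).
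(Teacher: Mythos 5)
Your construction is the same as the paper's (a cycle, respectively a bi-infinite oriented path, decorated periodically with exploded rooted trees realizing the $G_i$), and your treatment of part (\ref{item:anywrzs}) is correct: the oriented cycle is genuinely unique in $\Gamma_f$, hence automatically invariant, and the rest follows. The problem is in part (\ref{item:anywrz}), precisely at the step you yourself flag as the main obstacle. The assertion that ``all hyperbolic elements share a single axis $L$'' is not a consequence of \cite[Proposition 3.2]{tits_arbre}: that result assigns to \emph{each} hyperbolic automorphism its own axis, and says nothing about distinct hyperbolic elements having the same one. Your justification --- that conjugation preserves $\amp$ and hence ``sends axes to axes'' --- only shows that $\beta\tau\beta^{-1}$ has the same translation length as $\tau$; its axis is $\beta(L_\tau)$, and concluding $\beta(L_\tau)=L_\tau$ is exactly what you are trying to prove. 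The claim is false in general: for the functional tree in which every vertex has in-degree $2$ (so $t=1$ and the tree hanging off each spine vertex is again a complete binary backward tree), $\Aut(f)$ acts transitively on the bi-infinite oriented paths, so the translation $\tau$ has conjugates of the same amplitude with genuinely different axes, and elliptic elements need not preserve the spine. Two hyperbolic isometries of a tree whose axes share only a forward ray can perfectly well have equal translation lengths while their amplitude-zero quotient is elliptic, so no contradiction with $\amp$ being a homomorphism arises.

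The gap is repairable, and the paper closes it combinatorially rather than via tree rigidity: choose the branching patterns $A_i$ of the explosions so that the initial cardinals $\alpha_{i,0}$ exceed every other cardinal occurring in any pattern and every in-degree in the original trees. Then the spine vertices are exactly the vertices of $\Gamma$ whose in-degree lies in $\{\alpha_{1,0},\dots,\alpha_{t,0}\}$ (equivalently, the only sources of edges receiving more than some threshold $\kappa$ of incoming edges), so every automorphism preserves the spine for purely degree-theoretic reasons; distinctness of the $\alpha_{i,0}$ then pins the translation length down to a multiple of $t$, and the wreath-product decomposition follows. Your construction already applies \prref{pr:manysameauto}, so imposing this extra ordering on the cardinals costs nothing --- but the argument for invariance of the spine must be made this way (or by directly ruling out an isomorphic decoration along any other bi-infinite oriented path), not by appeal to a common axis for all hyperbolic elements.
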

\begin{proof}
  The differences between the two cases are not substantial, so we focus on part (\ref{item:anywrz}).

  Start with a bi-infinite directed path
  \begin{equation*}
    P:=\cdots \xrightarrow{\quad}x_{-1} \xrightarrow{\quad} x_0 \xrightarrow{\quad} x_1\xrightarrow{\quad}\cdots
  \end{equation*}
  with $\ZZ$-indexed vertices. To construct the desired functional digraph $\Gamma$, attach a directed rooted tree $T_i$, $1\le i\le t$ with automorphism group $G_i$ to each
  \begin{equation*}
    x_j,\quad j=i-1\mod t
  \end{equation*}
  with $x_j$ as the root. Said trees can also be enhanced to $A_i$-explosions thereof in the sense of \deref{def:branchingtree} (\ref{item:explos}), as in the proof of \prref{pr:manysameauto}, so as to ensure that
  \begin{equation*}
    \Aut(\Gamma)\cong \left(\prod_{i=1}^t G_i\right)\wr \ZZ.
  \end{equation*}
  One way to achieve this:
  \begin{itemize}
  \item Choose the branching patterns $A_i$ so as to ensure that the edges of the original path $P$ are the only ones in $\Gamma$ whose sources receive more than $\kappa$ edges for some large cardinal number $\kappa$. This will force $\Aut(\Gamma)$ to leave the path $P$ invariant, and hence also map the $T_i$ onto one another.

  \item Make sure the starting cardinals $\alpha_{i,0}$ respectively attached to the branching patterns $A_{i}$, $1\le i\le t$ are all distinct, thus further forcing $\Aut(\Gamma)$ to only map the trees attached to $x_j$ to those attached to $x_{j+kt}$, $k\in \ZZ$.
  \end{itemize}
  It should be clear now how to replicate this procedure for the finite cyclic case (\ref{item:anywrzs}).
\end{proof}

This leaves the situation of \thref{th:autogp-types} (\ref{item:everyonefix}) somewhat in the air, but there too we can be more explicit. For one thing, the inclusions making up the tower of products in that statement are of a very special nature.

First, it should be clear from \thref{th:arbprod} that products of the form \equref{eq:genwr} appear quite frequently and naturally. It will be convenient, for that reason, to have some compact notation for them.

\begin{definition}\delabel{def:mulwr}
  A {\it multi-wreath product} is a product of the form \equref{eq:genwr} for groups $G_i$ and symmetric groups $\Sigma_i:=\Sigma_{C_i}$ attached to sets $C_i$ respectively. We will denote such products by
  \begin{equation*}
    (G_i)\wr (C_{i}):=\prod_{i\in I}G_i\wr \Sigma_{C_i}.
  \end{equation*}
  When the family $I$ is a singleton we drop the parentheses, writing simply $G\wr C$ for $G\wr \Sigma_C$.
\end{definition}

Having set up that piece of notation, we also need

\begin{definition}\delabel{def:wrmerge}
  A {\it wreath merger} is a group embedding of the form
  \begin{equation}\eqlabel{eq:wrmerge}
    G\times G\wr C\times (G_i)\wr (C_i)
    \xrightarrow{\quad}
    G\wr (C\sqcup\{*\}) \times (G_i)\wr (C_i)
  \end{equation}
  for some groups $G$, $G_i$, $i\in I$ and sets $C$ and $C_i$, $i\in I$, where
  \begin{itemize}
  \item the map
    \begin{equation*}
      G\times G\wr C\to G\wr (C\sqcup\{*\})
    \end{equation*}
    is the obvious embedding;
  \item and the morphism acts as the identity on the other factors.
  \end{itemize}
  In other words, such a morphism aggregates one factor $G$ to another factor $G\wr C$ into the larger wreath product over the larger set $C\sqcup{*}$, and leaves the other factors undisturbed.
\end{definition}

\begin{remark}
  The set $C$ of \deref{def:wrmerge} can also be empty, in which case the wreath merger is nothing but the obvious identification of the two sides.
\end{remark}

To return to \thref{th:autogp-types}:

\begin{lemma}\lelabel{le:iswrmerge}
  The embeddings that constitute the union of \thref{th:autogp-types} (\ref{item:everyonefix}) are wreath mergers in the sense of \deref{def:wrmerge}, with the groups $G$ and $G_i$ all being directed-rooted-tree automorphism groups.
\end{lemma}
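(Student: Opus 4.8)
The plan is to read the embeddings straight off the proof of \thref{th:autogp-types}\,(\ref{item:everyonefix}) and match them term-by-term against \deref{def:wrmerge}. Recall that in this case $\Gamma_f$ is a sink-free functional tree, and for a fixed $x\in X$ one has $\Aut(f)=\bigcup_{n\ge 1}\mathrm{Fix}(f^n x)$; since an $\alpha$ commuting with $f$ and fixing a vertex $y$ must also fix $f(y)$, these subgroups form an increasing chain, so the union in question is made up of the inclusions $\mathrm{Fix}(f^n x)\hookrightarrow\mathrm{Fix}(f^{n+1}x)$. Writing $w:=f^n x$, cutting the forward ray $w\to fw\to f^2w\to\cdots$ exhibits $\mathrm{Fix}(w)=\prod_{k\ge 0}\Aut(T_k)$, where $T_k$ is the directed rooted tree with root $f^k w$ left behind at that vertex.

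First I would record that every rooted-tree automorphism group is itself a multi-wreath product in the sense of \deref{def:mulwr}: the branches at the root split into (rooted) isomorphism classes, an automorphism fixes the root and permutes isomorphic branches while acting within each, whence $\Aut(T)\cong (\Aut(B_j))\wr (C_j)$ with $B_j$ the branch types and $C_j$ the set of copies of each. Consequently both $\mathrm{Fix}(w)$ and $\mathrm{Fix}(fw)$ are (large) multi-wreath products of rooted-tree automorphism groups, and the task reduces to identifying the comparison map.

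Next I would compare the two ray-cuttings. The ray from $fw$ differs from that from $w$ only in that the edge $w\to fw$ is no longer severed; hence every $T_k$ with $k\ge 2$ reappears verbatim, while $T_0$ (root $w$) and $T_1$ (root $fw$) amalgamate into a single tree $T_0'$, namely $T_1$ with a copy of $T_0$ re-attached at $fw$ along $w\to fw$. Thus
\[
\mathrm{Fix}(w)=\Aut(T_0)\times\Aut(T_1)\times\prod_{k\ge 2}\Aut(T_k),\qquad \mathrm{Fix}(fw)=\Aut(T_0')\times\prod_{k\ge 2}\Aut(T_k),
\]
and the inclusion is the identity on $\prod_{k\ge 2}\Aut(T_k)$ and the gluing map $\Aut(T_0)\times\Aut(T_1)\to\Aut(T_0')$ on the remaining factors.

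The crux is to see this gluing as a single wreath merger. The key point is that $T_0$ is itself one of the branches at $fw$ (the one rooted at the preimage $w$), so, setting $G:=\Aut(T_0)$, the branch type of $T_0$ already appears in $T_1$ with some multiplicity indexed by a set $C$ (possibly empty), contributing a factor $G\wr C$ to the multi-wreath decomposition of $\Aut(T_1)$; re-attaching $T_0$ merely enlarges that class to $C\sqcup\{*\}$, turning the factor into $G\wr (C\sqcup\{*\})$ inside $\Aut(T_0')$ and leaving the other branch classes of $T_1$ untouched. Collecting those untouched classes together with all $\Aut(T_k)$, $k\ge 2$, into one multi-wreath product $(G_i)\wr (C_i)$, the embedding becomes precisely
\[
G\times G\wr C\times (G_i)\wr (C_i)\longrightarrow G\wr (C\sqcup\{*\})\times (G_i)\wr (C_i),
\]
the wreath merger of \deref{def:wrmerge}, with $G$ and all the $G_i$ directed-rooted-tree automorphism groups (when $C=\emptyset$ this is the trivial identification noted after that definition). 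The only genuine care needed is the bookkeeping of the two edge-cuttings and the observation that the branch type of $T_0$ coincides with an existing class of $T_1$; both are immediate once one tracks the preimages of $fw$.
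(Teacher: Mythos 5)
Your proposal is correct and follows essentially the same route as the paper: cut the forward ray at $w$ and at $fw$, observe that the trees rooted at $f^kw$ for $k\ge 2$ are unchanged while $T_0$ and $T_1$ amalgamate into $T_{fw,0}$, and identify $C$ as the set of branches of $T_1$ at $fw$ of the same rooted-isomorphism type as $T_0$, so that the gluing map $\Aut(T_0)\times\Aut(T_1)\to\Aut(T_{fw,0})$ is exactly the merger $G\times G\wr C\to G\wr(C\sqcup\{*\})$ on that class and the identity elsewhere. Your added remarks (that the chain $\mathrm{Fix}(f^nx)$ is increasing, and that rooted-tree automorphism groups are multi-wreath products of branch automorphism groups) are implicit in the paper's argument and harmless.
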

\begin{proof}
  This is virtually immediate, once one unpacks the proof of \thref{th:autogp-types} (\ref{item:everyonefix}).

  For a vertex $y\in X$ and some other vertex $f^ny$, $n\ge 0$ along the path \equref{eq:yfyfy} denote by $T_{y,n}$ the directed tree with root $f^ny$ left over upon removing the edges of \equref{eq:yfyfy} (but not its vertices). The proof of \thref{th:autogp-types} (\ref{item:everyonefix}), rephrased using this notation, says that $G$ is the union of
  \begin{equation*}
    \prod_{n\ge 0}\Aut\left(T_{y,n}\right)
    \xrightarrow{\quad}
    \prod_{n\ge 0}\Aut\left(T_{fy,n}\right)
    \xrightarrow{\quad}
    \prod_{n\ge 0}\Aut\left(T_{f^2y,n}\right)
    \xrightarrow{\quad}
    \cdots
  \end{equation*}
  We focus on the first of these inclusions, as the discussion applies to the others with minimal notational changes.

  Note that for $n\ge 2$ we have $T_{y,n}=T_{fy,n-1}$, so the first of these inclusions is
  \begin{equation*}
    \Aut\left(T_{y,0}\right)\times \Aut\left(T_{y,1}\right)\times \left(\Aut\left(T_{y,2}\right)\times\cdots\right)
    \xrightarrow{\quad}
    \Aut\left(T_{fy,0}\right) \times \left(\Aut\left(T_{y,2}\right)\times\cdots\right),
  \end{equation*}
  with the parenthesized factors identical and the morphism acting identically on those factors.

  On the other hand, the tree $T_{fy,0}$ consists of $T_{y,1}$, with the same root $fy$, and $T_{y,0}$ attached to that root via the edge $y\to fy$:
  \begin{equation}\eqlabel{eq:tytree0}
    \begin{tikzpicture}[>=stealth,auto,baseline=(current  bounding  box.center),place/.style={circle,draw=black!50,fill=black!100,thick,inner sep=0pt,minimum size=1mm}]

      \node at ( 0,0) [place] (y) [label=south west:$y$] {};
      \node at (1,-2) [place] (fy) [label=south east:$fy$] {};

      \node at (-1,1) (yl) {};
      \node at (1,1) (yr) {};

      \node at (2,0) (fyl) {};
      \node at (4,0) (fyr) {};

      \node at (0,.8) (ty0) {$T_{y,0}$};
      \node at (2.5,-.5) (ty1) {$T_{y,1}$};

      \node at (-3,-.5) (tfy0) {$T_{fy,0}:$};

      \draw [->] (y) to[bend right=20] (fy);
      \draw [-] (yl) to[bend right=10] (y);
      \draw [-] (yr) to[bend left=10] (y);

      \draw [-] (fyl) to[bend right=10] (fy);
      \draw [-] (fyr) to[bend left=10] (fy);
    \end{tikzpicture}
  \end{equation}
  $\Aut\left(T_{y,1}\right)$ is a multi-wreath product of automorphism groups of trees obtained by removing the edges incident to the root $fy$ in $T_{y,1}$, and $\Aut\left(T_{fy,0}\right)$ may or may not map the tree $T_{y,0}\cup(y\to fy)$ onto other sub-trees of $T_{fy,0}$ attached to the root $fy$. In any event, it should be clear now that
  \begin{equation*}
    \Aut\left(T_{y,0}\right)\times \Aut\left(T_{y,1}\right)
    \xrightarrow{\quad}
    \Aut\left(T_{fy,0}\right)
  \end{equation*}
  is a map of the form \equref{eq:wrmerge}, with
  \begin{itemize}
  \item $G=\Aut\left(T_{y,0}\right)$;
  \item $C$ being the set (perhaps empty) of sub-trees of $T_{fy,0}$ onto which
    \begin{equation*}
      T_{y,0}\cup(y\to fy)\subset T_{fy,0}
    \end{equation*}
    is mapped by elements of $\Aut\left(T_{fy,0}\right)$;
  \item and $G_i$ are automorphism groups of other sub-trees of $T_{fy,0}$, also rooted at $fy$, each with $fy$ having in-degree 1 therein.  \qedhere
  \end{itemize}
\end{proof}

There is a converse to \leref{le:iswrmerge}, characterizing the groups arising in \thref{th:autogp-types} (\ref{item:everyonefix}):

\begin{theorem}\thlabel{th:alleveryonefix}
  The groups realizable as in \thref{th:autogp-types} (\ref{item:everyonefix}) are precisely the embedding-chain unions of the form
  \begin{equation}\eqlabel{eq:chainofmergers}
    G_0\times H_1\times H_2\times H_3\times\cdots
    \xrightarrow{\quad}
    G_1\times H_2\times H_3\times\cdots
    \xrightarrow{\quad}
    G_2\times H_3\times\cdots
  \end{equation}
  where
  \begin{itemize}
  \item all $G_i$ and $H_i$ are directed-rooted-tree automorphism groups;
  \item and the $n^{th}$ embedding in the chain (starting the count at $n=0$) is the identity on the factors $H_k$, $k\ge n+2$ and restricts to a wreath merger
    \begin{equation*}
      G_n\times H_{n+1}
      \xrightarrow{\quad}
      G_{n+1}
    \end{equation*}
    in the sense of \deref{def:wrmerge}.
  \end{itemize}
\end{theorem}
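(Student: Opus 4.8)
The statement is an \emph{iff}, so the plan is to argue the two inclusions separately. One direction (every group arising in \thref{th:autogp-types} (\ref{item:everyonefix}) has the form \equref{eq:chainofmergers}) is essentially a repackaging of \leref{le:iswrmerge}. Start from a connected $f:X\to X$ with no finite orbits and $\amp\equiv 0$, fix a vertex $y$ with forward ray \equref{eq:yfyfy}, and recall from the proof of \thref{th:autogp-types} (\ref{item:everyonefix}) that $\Aut(f)=\bigcup_{n\ge 0}\textrm{Fix}(f^n y)$ with $\textrm{Fix}(f^n y)=\prod_{k\ge 0}\Aut(T_{f^n y,k})$. Setting $G_n:=\Aut(T_{f^n y,0})$ and $H_n:=\Aut(T_{y,n})$ (all directed-rooted-tree automorphism groups, since the $T$'s are directed rooted trees), the identities $T_{f^n y,k}=T_{y,n+k}$ for $k\ge 1$ rewrite the tower $\textrm{Fix}(y)\hookrightarrow \textrm{Fix}(fy)\hookrightarrow\cdots$ literally as \equref{eq:chainofmergers}, and \leref{le:iswrmerge} identifies each inclusion as a wreath merger. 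This settles the necessity direction with no further work.

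For the converse I would realize an abstract chain \equref{eq:chainofmergers} as some $\Aut(\Gamma_f)$ by an inductive tree construction. The key structural fact is that the automorphism group of a directed rooted tree is the multi-wreath product, over the isomorphism types of the subtrees hanging at the root, of their automorphism groups by the symmetric groups on the copies; each such subtree is again a directed rooted tree. Consequently the distinguished factor $G_n\wr C_n$ inside $H_{n+1}=\Aut(U_{n+1})$ prescribed by the merger corresponds to $|C_n|$ mutually isomorphic subtrees at the root of $U_{n+1}$, each realizing $G_n$. I would then build the trees $S_n$ inductively: take $S_0$ to be any directed rooted tree with $\Aut(S_0)=G_0$; given $S_n$ with $\Aut(S_n)=G_n$, realize $U_{n+1}$ with exactly $|C_n|$ copies of $S_n$ (together with realizations of the remaining multi-wreath factors of $H_{n+1}$) attached at its root, and set $S_{n+1}:=U_{n+1}$ with one further copy of $S_n$ glued to the root through a new edge. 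The defining shape \equref{eq:wrmerge} of a wreath merger then gives exactly $\Aut(S_{n+1})=G_n\wr(C_n\sqcup\{*\})\times(\cdots)=G_{n+1}$.

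It remains to assemble these into a single function and verify its properties. I would form $\Gamma=\Gamma_f$ as the increasing union $S_0\subset S_1\subset\cdots$ along the \emph{spine} $y_0\to y_1\to\cdots$, where $y_n$ is the root of $S_n$ and $S_n\hookrightarrow S_{n+1}$ is the gluing edge $y_n\to y_{n+1}$. Every vertex then has a unique outgoing edge (toward its root, then along the spine), so $\Gamma=\Gamma_f$ for a genuine $f$ with no sinks, i.e. no fixed points; all forward rays eventually reach and run along the spine, so $\Gamma$ is connected by condition (\ref{item:15}) of \thref{th:conngrph}; and injectivity of the spine rules out cycles, hence finite orbits. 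The crucial point is that $\amp\equiv 0$: the full backward subtree at $y_n$ is precisely $S_n$, and since $|S_{n+1}|>|S_n|$ the $S_n$ are pairwise non-isomorphic, so no automorphism can shift the spine — any $\alpha$ with $\amp(\alpha)=t\ne 0$ would (by the axis description used for \equref{eq:amp}) translate a tail of the spine, forcing $S_n\cong S_{n+t}$, a contradiction. Thus $\Gamma$ falls in case (\ref{item:everyonefix}), and $\Aut(\Gamma)=\bigcup_n\textrm{Fix}(y_n)$ reproduces the chain \equref{eq:chainofmergers} by the necessity computation above.

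The main obstacle is the sufficiency construction: one must realize the merger data geometrically and \emph{compatibly for all $n$ at once}, and simultaneously exclude accidental translations that would land the example in \thref{th:autogp-types} (\ref{item:nofix}) rather than (\ref{item:everyonefix}). The strict-growth argument for the backward subtrees $S_n$ disposes of the $\amp\equiv 0$ requirement cleanly and without the auxiliary $A$-explosion marking used in \prref{pr:manysameauto}; what remains is the careful bookkeeping of the recursive wreath structure to see that attaching $S_n$ among its isomorphic siblings in $U_{n+1}$ indeed effects exactly the wreath merger $G_n\times H_{n+1}\to G_{n+1}$.
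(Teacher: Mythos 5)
Your necessity direction is fine and coincides with the paper's (both reduce it to \leref{le:iswrmerge}), and your sufficiency construction has the same overall shape as the paper's: build the tree inductively along a spine, attaching at each new root the copies of the previous backward subtree prescribed by the merger. The gap is in the step where you discard the paper's $A$-explosion device and replace it with the claim that $|S_{n+1}|>|S_n|$ forces the backward subtrees $S_n$ to be pairwise non-isomorphic, hence $\amp\equiv 0$. That cardinality argument only works when the $S_n$ are finite; the theorem must cover automorphism groups of \emph{infinite} directed rooted trees (that is the point of the whole branching-pattern apparatus, and of \thref{th:itwr}), and for infinite trees proper containment gives no cardinality increase and no non-isomorphism. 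Concretely: take all $G_n$ and $H_n$ trivial with all mergers the identity, so the chain union is the trivial group. Your recipe permits $S_0$ to be a one-ended infinite directed path (a legitimate rigid directed rooted tree) and each $U_{n+1}$ to be a single vertex; then every $S_{n+1}$ is $S_n$ with one new root appended, all $S_n$ are isomorphic, and the resulting $\Gamma$ is a bi-infinite directed path with $\Aut(\Gamma)\cong\ZZ$ --- landing in case (\ref{item:nofix}) of \thref{th:autogp-types} rather than (\ref{item:everyonefix}), with the wrong group. The same unaddressed non-isomorphism issue also threatens the identity $\Aut(S_{n+1})=G_{n+1}$ itself: the freshly glued copy of $S_n$ must not be isomorphic to any of the subtrees realizing the other factors $G_i\wr C_i$ of $H_{n+1}$, or it gets absorbed into the wrong wreath factor.

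This is exactly what the paper's proof is engineered to prevent: each attached tree is replaced by an $A$-explosion along a branching pattern whose constituent cardinals are distinct, strictly decreasing as $n$ grows, and dominated by the initial cardinal $\alpha_0$ used at level $0$. That marking simultaneously (i) pins down the level function $|\cdot|$, since only vertices at level $0$ have in-degree $\ge\alpha_0$, forcing every automorphism to fix some $f^ny$ (so the example stays in case (\ref{item:everyonefix})), and (ii) guarantees the required non-isomorphisms among the subtrees hanging at each root, so the wreath mergers come out exactly as prescribed. Some such marking mechanism is not optional bookkeeping; without it the construction can fail, as above. To repair your proof you would need to reinstate it (or an equivalent device) rather than rely on growth of $|S_n|$.
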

\begin{proof}
  \leref{le:iswrmerge} shows that the groups of \thref{th:autogp-types} (\ref{item:everyonefix}) are indeed of this form, so it remains to argue the converse: that every such chain is realizable as $\Aut(\Gamma)$ for a functional tree $\Gamma=\Gamma_f$ (not rooted!) as in \thref{th:autogp-types} (\ref{item:everyonefix}).

  We thus assume the chain \equref{eq:chainofmergers} given. To construct the desired functional tree $\Gamma$, we will reverse the procedure in the proof of \leref{le:iswrmerge}, retaining also the notation $T_{y,n}$ for the trees we are working with.

  Starting with a path \equref{eq:yfyfy}, we will want to attach to it directed rooted trees $T_{y,n}$, with the $f^ny$ as their respective roots, so that
  \begin{equation*}
    \begin{aligned}
      G_n&\cong \Aut\left(T_{f^ny,0}\right),\ \forall n\ge 0,\\
      H_n&\cong \Aut\left(T_{y,n}\right) \cong \Aut\left(T_{f^{n-1}y,1}\right),\ \forall n\ge 1
    \end{aligned}
  \end{equation*}
  and the wreath merger
  \begin{equation*}
    \Aut\left(T_{f^ny,0}\right)\times \Aut\left(T_{y,n+1}\right)
    \cong
    G_n\times H_{n+1}\to G_{n+1}
    \cong
    \Aut\left(T_{f^{n+1}y,0}\right)
  \end{equation*}
  is the obvious one, resulting from the decomposition \equref{eq:tytree0} applied to $f^n y$ in place of $y$, taking into account the identification
  \begin{equation*}
    T_{y,n+1}=T_{f^ny,1},\ \forall n\ge 0.
  \end{equation*}
  The substance of the construction consists in choosing the $T_{y,n}$ judiciously, subject to these constraints. Regardless of how we do that, though, we will also have (and use) a map $|\cdot|:\Gamma\to \ZZ$ as in the proof of \thref{th:autogp-types}:
  \begin{equation*}
    |z|:=n-\left(\textrm{length of path from $z$ to the root $f^ny$ of $T_{y,n}$}\right),\ \forall z\in T_{y,n}.
  \end{equation*}
  To begin with, $G_0$ is by assumption the automorphism group of a directed rooted tree. We can take $T_{y,0}$ to be an $A$-explosion of such a tree for some branching pattern $A$ (Definitions \ref{de:def:branch} and \ref{de:def:branchingtree}) whose constituent cardinals are large (how large, we estimate below) and distinct, as in the proof of \prref{pr:manysameauto}.

  If the initial cardinal $\alpha_0$ of $A$ is the largest, and also larger than all of the cardinals entering the wreath products implicit in \equref{eq:chainofmergers}, then the only vertices $z$ of the resulting tree $\Gamma$ (constructed according to the recipe sketched below) with in-degree $\ge \alpha_0$ satisfy $|z|=0$, so that indeed all elements of $\Aut(\Gamma)$ will fix some $f^n y$.

  Next, consider the wreath merger
  \begin{equation*}
    G_0\times H_1\to G_1,
  \end{equation*}
  expressed in the form \equref{eq:wrmerge}. The next tree $T_{y,1}$ that we have to select will be one with automorphism group $H_1$, consisting of
  \begin{itemize}
  \item $|C|$ copies of the preceding tree $T_{y,0}$ attached with $|C|$ edges to the root $fy$;
  \item and for each $i$ (notation as in \equref{eq:wrmerge}) $|C_i|$ copies of the same tree with automorphism group $G_i$;
  \item and with these trees exploded along branching patterns with distinct large cardinals, but all smaller than those of the initial branching pattern $A$.
  \end{itemize}
  Now repeat the procedure: at each step, when constructing $T_{y,n}$, one must include some copies of the preceding tree $T_{f^{n-1}y,0}$ to account for the merger, along with other trees exploded along branching patterns whose cardinals are {\it smaller} than those of the preceding branching patterns employed.

  There is of course an ample supply of cardinals for all of this to go through, hence the conclusion.
\end{proof}

The joint effect of Theorems \ref{th:th:autogp-types}, \ref{th:th:arbprod}, \ref{th:th:allwreathcyclic} and \ref{th:th:alleveryonefix} is that of characterizing the groups of the form $\Aut(f)$ in terms of automorphism groups of directed rooted trees. These, in turn, are easily described.

To that end, recall \cite[\S 1]{bond_itwr} that for a sequence $(G_n)_{n\ge 0}$ of permutation groups (acting, say, on the sets $X_n$) one can define the {\it iterated wreath product}
\begin{equation*}
  \cdots\wr G_1\wr G_0 = \varprojlim_n G_n\wr \cdots\wr G_1\wr G_0,
\end{equation*}
the right-hand side being defined inductively as the usual wreath product
\begin{equation*}
  G_n\wr \left(G_{n-1}\wr \cdots\wr G_0\right)
\end{equation*}
associated to the (only reasonable) action of the previous iteration $G_{n-1}\wr \cdots\wr G_0$ on $X_{n-1}\times\cdots\times X_0$.

\begin{theorem}\thlabel{th:itwr}
  The groups realizable as automorphism groups of directed rooted trees are precisely the iterated wreath products $\cdots\wr G_1\wr G_0$, where each $G_n$ is a permutation group of the form
  \begin{equation*}
    \prod_{j\in S_n}\Sigma_{X_{nj}}\textrm{ acting on the disjoint union }\coprod_{j\in S_n}X_{nj}
  \end{equation*}
  for sets $S_n$ and $X_{nj}$, $j\in S_n$.
\end{theorem}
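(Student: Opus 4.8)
The plan is to prove the two inclusions separately, driving everything with the recursive structure of automorphism groups of directed rooted trees and using the explosion machinery of \prref{pr:manysameauto} for the realizability half. First I would record the fundamental recursion. Let $T$ be a directed rooted tree with root $r$. Every automorphism fixes $r$ (the unique sink, hence the unique fixed point of the associated self-map; see \reref{isrooted}) and therefore permutes the branches $T_s$ rooted at the children $s$ of $r$. Grouping the children according to the isomorphism type of the branch they carry, write the child set as $\coprod_{j\in S_0} X_{0j}$, where $X_{0j}$ collects the children whose branch has type $j$, and let $T_j$ be a representative branch of type $j$. An automorphism may permute the children inside a fixed block $X_{0j}$ arbitrarily (carrying branches isomorphically) and act independently on each branch by its own automorphisms, while no child of one block can be mapped to a child of another. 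This yields the canonical isomorphism
\begin{equation*}
  \Aut(T)\;\cong\;\prod_{j\in S_0}\Bigl(\Aut(T_j)\wr \Sigma_{X_{0j}}\Bigr),
\end{equation*}
with $\Sigma_{X_{0j}}$ the top factor permuting the block. This single step is the engine of the whole argument.

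For the forward inclusion I would unfold this recursion level by level. Setting $G_0:=\prod_{j\in S_0}\Sigma_{X_{0j}}$ — visibly of the stated form $\prod_{j\in S_n}\Sigma_{X_{nj}}$ — the displayed decomposition exhibits $\Aut(T)$ as built from the $\Aut(T_j)$ by wreathing with the symmetric groups $\Sigma_{X_{0j}}$ and taking a product. Applying the same analysis to each $T_j$ produces the group $G_1$ governing depth $1$, again a product of symmetric groups, and so on. For finite-depth trees this is a finite induction; for infinite trees the tower of quotient maps (successively killing the deeper base factors) is precisely the inverse system defining $\varprojlim_n G_n\wr\cdots\wr G_0$, so that $\Aut(T)$ is recovered as the iterated wreath product $\cdots\wr G_1\wr G_0$.

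For the converse I would realize a prescribed iterated wreath product $\cdots\wr G_1\wr G_0$, $G_n=\prod_{j\in S_n}\Sigma_{X_{nj}}$, as $\Aut(T)$ for a suitable $T$, proceeding recursively (or as an inverse limit of finite-depth approximations) in the spirit of the proofs of \thref{th:arbprod} and \thref{th:allwreathcyclic}. To install level $0$, I attach to a single root, for each $j\in S_0$, exactly $|X_{0j}|$ mutually isomorphic branches, each branch being the tree that recursively realizes the deeper part $\cdots\wr G_2\wr G_1$. To prevent the blocks from merging (which would create unwanted symmetric factors), I decorate the branches of distinct types $j$ with $A$-explosions (\deref{def:branchingtree}) built from pairwise-distinct, sufficiently large cardinals, chosen larger than any in-degree occurring deeper, exactly as in \prref{pr:manysameauto}. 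By the argument of that proposition the decorations add no automorphisms and render distinct-type branches non-isomorphic, so that the automorphism group of the result is precisely $\prod_{j\in S_0}\bigl((\cdots\wr G_2\wr G_1)\wr\Sigma_{X_{0j}}\bigr)=(\cdots\wr G_1)\wr G_0$, using the identity $W\wr\bigl(\prod_j\Sigma_{X_{0j}}\bigr)=\prod_j\bigl(W\wr\Sigma_{X_{0j}}\bigr)$ for $W$ copied over the disjoint union $\coprod_j X_{0j}$.

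The main obstacle is the bookkeeping that makes the level-by-level picture honest. In the forward direction one must organize the isomorphism types arising at each depth across the whole (possibly heterogeneous, possibly infinite) tree so that the depth-$n$ contribution genuinely assembles into a single product of symmetric groups $G_n$ and the connecting maps match the inverse limit; branches of differing depth and shape must be threaded carefully so that the tower $\cdots\wr G_1\wr G_0$ faithfully reproduces $\Aut(T)$. In the converse direction the delicate point is the simultaneous, coherent choice of branching patterns — using a strictly decreasing family of distinct large cardinals as one descends — which guarantees that the constructed tree has automorphism group \emph{exactly} the target, with no spurious identifications between type-blocks at any level and no extra automorphisms introduced by the explosions.
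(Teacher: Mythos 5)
Your proposal is correct and follows essentially the same route as the paper: the same split surjection of $\Aut(T)$ onto $\prod_{j\in S_0}\Sigma_{X_{0j}}$ indexed by the orbits (equivalently, isomorphism classes) of root-branches, with kernel the product of the branch automorphism groups, iterated into the inverse limit; and the same realization of a prescribed iterated wreath product by attaching blocks of mutually isomorphic branches separated across blocks by $A$-explosions with distinct large initial cardinals as in \prref{pr:manysameauto}. The ``bookkeeping'' concern you flag about assembling the depth-$n$ data into a single $G_n$ is real but is present in the paper's own argument as well, so nothing is missing relative to it.
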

\begin{proof}
  Consider, first, a directed rooted tree $\Gamma$, with root $x_0$ and edges $e_i$, $i\in I$ incident to it. The automorphism group $G:=\Aut(\Gamma)$ permutes the directed rooted sub-trees $\Gamma_i\subset\Gamma$ with the sources of $e_i$ as their respective roots, partitioning that set of trees into orbits, as
  \begin{equation*}
    I = \coprod_j X_{0j},\quad j\in\textrm{ set $S_0$ of orbits}.
  \end{equation*}
  This gives surjection
  \begin{equation*}
    G\to \prod_{j\in S_0}\Sigma_{X_{0j}}=:G_0,
  \end{equation*}
  easily seen to be split, with kernel $\prod\Aut(\Gamma_i)$. It should be clear now how, repeating this procedure with the $\Gamma_i$ in place of $\Gamma$, one can recover $G$ as the desired inverse limit.

  Conversely, consider an iterated wreath product $G$ as in the statement; we have to argue that a directed rooted tree $\Gamma$ exists, with $G$ as its automorphism group. This requires that
  \begin{itemize}
  \item we have edges adjacent to the root $x_0$ indexed by the disjoint union $\coprod_{j\in S_0}X_{0j}$;
  \item and that for every $j\in S_0$ the edges in $X_{0j}$ attach to $x_0$ mutually isomorphic trees $\Gamma_x$, $x\in X_{0,j}$;
  \item which are not isomorphic to those associated to other $j'\in S_0$, $j'\ne j$;
  \item any {\it other} edges incident to the root $x_0$, apart from those in $\coprod_{j\in S_0}X_{0j}$, attach to $x_0$ trees that are both mutually non-isomorphic and non-isomorphic to any of the $\Gamma_x$.
  \item and so on, with the $\Gamma_x$ in place of $\Gamma$.
  \end{itemize}
  That such a pattern of (non-)isomorphism is achievable should be easy to see by now, via the same process (used repeatedly above) of attaching trees $\Gamma_A$ associated to branching patterns $A$: to ensure that $\Gamma_x$, $x\in X_{0j}$ and $\Gamma_x'$, $x'\in X_{0j'}$ are non-isomorphic for $j\ne j'$, for instance, one can attach $\Gamma_{A_j}$ and $\Gamma_{A_{j'}}$ to the roots of all
  \begin{equation*}
    \Gamma_x,\ x\in X_{0j}
    \quad\textrm{and}\quad
    \Gamma_x',\ x\in X_{0j'}
    \quad\textrm{respectively},
  \end{equation*}
  ensuring that the initial cardinals of $A_j$ and $A_{j'}$ are distinct for $j\ne j'$.
\end{proof}

Piecing together the various results above will now give a complete characterization of the groups of the form $\Aut(f)\cong \Aut(X,R_f)$. As consequence, the abelian ones are easily described:

\begin{corollary}\colabel{cor:abautf}
  For an abelian group $G$, the following conditions are equivalent:
  \begin{enumerate}[(a)]

  \item\label{item:isautf} $G$ is of the form $\Aut(f)$ for some $f$.

  \item\label{item:isautfab} $G$ is an abelianization $\Aut(f)_{ab}$ for some $f$.

  \item\label{item:prodcyc} $G$ is a product of cyclic groups.

  \end{enumerate}
\end{corollary}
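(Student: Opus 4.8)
The plan is to establish the cycle of implications (\ref{item:prodcyc}) $\Rightarrow$ (\ref{item:isautf}) $\Rightarrow$ (\ref{item:isautfab}) $\Rightarrow$ (\ref{item:prodcyc}), of which the middle arrow is essentially free: if $G\cong\Aut(f)$ is abelian then it coincides with its own abelianization, so $G\cong\Aut(f)_{ab}$ and (\ref{item:isautf}) $\Rightarrow$ (\ref{item:isautfab}) is immediate.

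For (\ref{item:prodcyc}) $\Rightarrow$ (\ref{item:isautf}) I would realize an arbitrary product $\prod_{i\in I}G_i$ of cyclic groups as a function centralizer. Each finite cyclic factor $\ZZ/n$ is the automorphism group of the connected function given by a single $n$-cycle, and the infinite cyclic factor $\ZZ$ is the automorphism group of the bi-infinite directed path acting by translations, as in \thref{th:autogp-types} (\ref{item:cycle}) and (\ref{item:nofix}) with trivial tree-groups. Using \prref{pr:manysameauto} I would replace these connected graphs by mutually non-isomorphic ones carrying the same automorphism groups, and then form their disjoint union; since no two components are isomorphic, every automorphism preserves each component and $\Aut$ of the union is the full direct product $\prod_i G_i$. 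This is exactly \thref{th:arbprod} with all the sets $C_i$ taken to be singletons.

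The substance is (\ref{item:isautfab}) $\Rightarrow$ (\ref{item:prodcyc}), for which I would compute $\Aut(f)_{ab}$ from the structural results. By \prref{pr:autgpdec} we have $\Aut(f)\cong\prod_i\bigl(\Aut(f_i)\wr\Sigma_{C_i}\bigr)$ with $f_i$ connected, and \thref{th:autogp-types}, \thref{th:allwreathcyclic}, \thref{th:alleveryonefix}, \thref{th:itwr} express each $\Aut(f_i)$ as a wreath/iterated-wreath/limit assembly of symmetric groups together with the cyclic groups $\ZZ/s$ and $\ZZ$. The arithmetic facts driving the computation are that $(A\wr\Sigma_C)_{ab}\cong A_{ab}\times(\Sigma_C)_{ab}$, that $(\Sigma_C)_{ab}$ is cyclic (namely $\ZZ/2$ for finite $|C|\ge 2$ and trivial otherwise), and that abelianization commutes with direct products and with the direct limits occurring in \thref{th:autogp-types} (\ref{item:everyonefix}). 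Feeding these through the hierarchy, the abelianization of an iterated-wreath directed-rooted-tree group is a product of copies of $\ZZ/2$ (every element has order dividing $2$), while the factors $\ZZ/s$ and $\ZZ$ survive unchanged; collecting them shows $\Aut(f)_{ab}$ is a product of cyclic groups. Since an abelian $\Aut(f)$ equals its own abelianization, this argument simultaneously settles (\ref{item:isautf}) $\Rightarrow$ (\ref{item:prodcyc}).

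The main obstacle I anticipate is the interaction of abelianization with the infinite constructions. For an infinite product $\prod_i K_i$ one has only an automatic surjection $(\prod_i K_i)_{ab}\twoheadrightarrow\prod_i(K_i)_{ab}$, and equality requires controlling commutator width in the factors $K_i=\Aut(f_i)\wr\Sigma_{C_i}$; the limit case \thref{th:autogp-types} (\ref{item:everyonefix}) similarly demands that the increasing union of products of $\ZZ/2$'s be recognized honestly as a direct product of cyclic groups rather than merely a direct sum. I expect these points to be handled by checking that the relevant symmetric and wreath-product factors have bounded commutator width, so that abelianization distributes over the product, and that the automorphisms in the limit case act with full rather than restricted support, so that the resulting elementary abelian $2$-groups are genuine direct products.
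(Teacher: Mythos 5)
Your argument follows the same route as the paper's own proof: (\ref{item:prodcyc}) $\Rightarrow$ (\ref{item:isautf}) via translation/cycle functions assembled by \thref{th:arbprod}, (\ref{item:isautf}) $\Rightarrow$ (\ref{item:isautfab}) formally, and (\ref{item:isautfab}) $\Rightarrow$ (\ref{item:prodcyc}) by pushing abelianization through \prref{pr:autgpdec}, \thref{th:autogp-types} and \thref{th:itwr} using that symmetric groups abelianize to the trivial group or $\ZZ/2$. The only point where you go beyond the paper is in flagging that abelianization need not commute with infinite products and direct limits; the paper elides this, and your proposed fix (uniformly bounded commutator width of the symmetric and wreath factors --- indeed width one for symmetric groups) is the correct way to close that gap.
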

\begin{proof}
  The implication (\ref{item:isautf}) $\Longrightarrow$ (\ref{item:isautfab}) is formal, so we focus on the others.

  \begin{enumerate}[]

  \item {\bf (\ref{item:prodcyc}) $\Longrightarrow$ (\ref{item:isautf})} A cyclic group $G$ generated by $\sigma$ is the automorphism group of the function
    \begin{equation*}
      G\xrightarrow{\quad\textrm{translation by $\sigma$\quad}}G,
    \end{equation*}
    and products of such are then function centralizers by \thref{th:arbprod}.

  \item {\bf (\ref{item:isautfab}) $\Longrightarrow$ (\ref{item:prodcyc})} This follows from \prref{pr:autgpdec}, \thref{th:autogp-types} and \thref{th:itwr} after observing that the abelianization of a symmetric group is either trivial (if the symmetric group is infinite) or $\ZZ/2$.  \qedhere

  \end{enumerate}
\end{proof}

$\fse$ solutions $(X,R)$ in general allow for more variety to their automorphism groups than single functions (i.e. than the solutions $R_f$ of \exref{exemFS} (\ref{item:fh})). Even Kimura semigroups alone (a poorer structure than an $\fse$ solution by \thref{refqm}) exhibit much flexibility in this regard:

\begin{theorem}\thlabel{th:allgpsquotkimura}
  For any group $G$, there is a Kimura semigroup $(X,\cdot)$, a permutation action of $G$ on some set $C$ and permutation groups $\Sigma_c$, $c\in C$ so that we have a semidirect product decomposition
  \begin{equation*}
    \Aut(X,\cdot)\cong \left(\prod_c\Sigma_c\right)\rtimes G
  \end{equation*}
  for the action of $G$ permuting $c\in C$.
\end{theorem}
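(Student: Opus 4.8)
The plan is to exploit the explicit structure of Kimura semigroups recorded in \reref{singletonAB}(3): any Kimura semigroup $X$ is a disjoint union $X=\coprod_{r\in R}S_r$ of fibers over a rectangular subsemigroup $R\cong A\times B$, with $r\in S_r$ and $S_rS_t=\{rt\}$ for all $r,t\in R$. Writing $\rho\colon X\to R$ for the retraction sending each element to the root of its fiber (equivalently $\rho(x)=x^2$, the idempotent map $h$ of \thref{th:refqmbis}), the multiplication is simply $x\cdot y=\rho(x)\rho(y)$, the product being taken in the rectangular band $R$. First I would compute $\Aut(X,\cdot)$ in these terms. Since any automorphism $\sigma$ satisfies $\sigma(x^2)=\sigma(x)^2$, it commutes with $\rho$, hence preserves the idempotent set $R$ and maps each fiber $S_r$ bijectively onto $S_{\sigma(r)}$; in particular $\sigma|_R$ is a rectangular-band automorphism, so $\sigma|_R\in \Sigma_A\times\Sigma_B$ by \leref{morfrecta}. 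Conversely, any bijection restricting to a band automorphism on $R$ and carrying fibers to fibers compatibly is an automorphism. Writing $c(r):=|S_r|$ for the cardinality function, this yields a split short exact sequence
\begin{equation*}
  1\longrightarrow \prod_{r\in R}\Sigma_{S_r\setminus\{r\}}\longrightarrow \Aut(X,\cdot)\longrightarrow H\longrightarrow 1,
\end{equation*}
where $H\le \Sigma_A\times\Sigma_B$ is the stabilizer of $c$ (the base automorphisms preserving fiber cardinalities), the splitting being obtained by fixing coherent pointed identifications of the fibers.

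The key realization is that, because we may permute $A$ and $B$ independently while preserving the ``matrix'' $c\colon A\times B\to\text{Card}$, the group $H$ is far from being a mere Young subgroup and can be forced to be isomorphic to an arbitrary $G$. Given $G$, I would take $A=B=G$ (as sets), let $R=A\times B$ be the corresponding rectangular band, fix an injection $g\mapsto \kappa_g$ of $G$ into cardinals $\ge 2$, and build the Kimura semigroup $X$ whose fiber over $(a,b)$ has cardinality
\begin{equation*}
  c(a,b)=\kappa_{a^{-1}b}.
\end{equation*}
A pair $(\alpha,\beta)\in\Sigma_G\times\Sigma_G$ then lies in $H$ iff $\kappa_{\alpha(a)^{-1}\beta(b)}=\kappa_{a^{-1}b}$ for all $a,b$; by injectivity of $\kappa$ this reads $\alpha(a)^{-1}\beta(b)=a^{-1}b$, and setting $b=a$ forces $\alpha=\beta$, after which $a=e$ forces $\alpha=\beta=L_g$, left translation by $g:=\alpha(e)$. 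Hence $H=\{(L_g,L_g)\mid g\in G\}\cong G$, the diagonal left-regular copy of $G$.

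Plugging this into the exact sequence gives
\begin{equation*}
  \Aut(X,\cdot)\cong\Bigl(\prod_{(a,b)\in G\times G}\Sigma_{S_{(a,b)}\setminus\{\text{root}\}}\Bigr)\rtimes G,
\end{equation*}
a product of full symmetric groups on which $G$ acts by permuting the factors through the free diagonal action $g\cdot(a,b)=(ga,gb)$ on $C:=G\times G$ --- precisely the asserted form (the action preserves $c$ since $(ga)^{-1}(gb)=a^{-1}b$, so the splitting is genuinely $G$-equivariant). I expect the main obstacle to be the first step: correctly identifying the quotient $H$ as the full cardinality stabilizer rather than overlooking the off-diagonal freedom in $\Sigma_A\times\Sigma_B$, and then recognizing that the Cayley-table labeling $c(a,b)=\kappa_{a^{-1}b}$ is exactly what rigidifies this stabilizer down to a single copy of $G$. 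Verifying that the section is a homomorphism and that the induced action on the kernel is the claimed factor-permutation (both using that $a^{-1}b$ is a diagonal invariant) is then routine.
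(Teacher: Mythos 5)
Your structural analysis of $\Aut(X,\cdot)$ is exactly the paper's: both reduce to the split extension of the stabilizer $H\le\Sigma_A\times\Sigma_B$ of the fiber-cardinality function $c$ by the product $\prod_r\Sigma_{S_r\setminus\{r\}}$ of full symmetric groups on the punctured fibers, and both splittings are obtained by fixing coherent identifications among fibers of equal cardinality. Where you diverge is in the final realization step. The paper phrases the problem as realizing $G$ as the automorphism group of an edge-colored bipartite graph (\leref{le:coloredbipart}) and proves this via the Cayley graph $\Gamma(G,S)$ for a monoid generating set $S$, with bipartition $(G,S)$ and two colors per generator to encode edge orientation. You instead take $A=B=G$ and color $(a,b)$ by (an injective cardinal-valued avatar of) $a^{-1}b$; the computation that the stabilizer is the diagonal copy $\{(L_g,L_g)\}\cong G$ is correct and clean, and it avoids both the choice of generating set and the orientation-encoding trick. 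Your route buys a more self-contained and arguably simpler rigidity argument (essentially the Cayley-table coloring, i.e.\ the case $S=G$ handled directly); the paper's route buys a reusable standalone lemma about edge-colored bipartite graphs and allows smaller bipartitions when $G$ has a small generating set. Your description of the resulting $G$-set $C=G\times G$ with the free diagonal action, and the verification that the section is multiplicative because $a^{-1}b$ is a diagonal invariant, are both correct. The proposal is a valid proof.
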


Before going into the proof, a consequence:

\begin{corollary}\colabel{cor:notallrf}
  There are automorphism groups of Kimura semigroups that are not function centralizers. Consequently, the class of $\fse$-solution automorphism groups is strictly broader than that of function centralizers.
\end{corollary}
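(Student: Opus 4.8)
The plan is to produce a single Kimura semigroup whose automorphism group violates the abelianization constraint that \coref{cor:abautf} imposes on all function centralizers, and then to leverage \coref{autfs} and \thref{refqm} for the strictness statement. First I would apply \thref{th:allgpsquotkimura} to the additive group $G=\QQ$ of rationals, obtaining a Kimura semigroup $(X,\cdot)$ and a semidirect product decomposition $\Aut(X,\cdot)\cong\bigl(\prod_c\Sigma_c\bigr)\rtimes\QQ$ in which $\QQ$ is realized as a \emph{split} quotient.

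Next I would argue, purely group-theoretically, that $\Aut(X,\cdot)$ is not a function centralizer. The split projection $\Aut(X,\cdot)\to\QQ$ onto an abelian group induces, after abelianization, a split surjection $\Aut(X,\cdot)_{ab}\to\QQ$; hence $\QQ$ is a direct summand of the abelian group $\Aut(X,\cdot)_{ab}$. Since $\QQ$ is divisible and nonzero, $\Aut(X,\cdot)_{ab}$ has a nonzero divisible subgroup, i.e. it is not reduced. On the other hand any product of cyclic groups is reduced (each $\ZZ$ and $\ZZ/n$ is reduced, a product of reduced abelian groups is reduced, and a direct summand of a reduced group is reduced), so $\Aut(X,\cdot)_{ab}$ is not a product of cyclic groups. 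The contrapositive of the implication (\ref{item:isautfab})$\Rightarrow$(\ref{item:prodcyc}) in \coref{cor:abautf} now shows that $\Aut(X,\cdot)$ cannot be of the form $\Aut(f)$, proving the first assertion.

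For the second assertion I would first note that every Kimura-semigroup automorphism group already occurs among $\fse$-solution automorphism groups: taking $\vartheta={\rm id}_X$, which is a quasi-endomorphism because \equref{kim} gives $y\cdot(x\cdot y)=y\cdot y$ and hence \equref{qmobis}, \thref{refqm} attaches the solution $R_{(\cdot,\,{\rm id})}$, and the compatibility \equref{autogene} holds automatically for every semigroup automorphism, so $\Aut(X,R_{(\cdot,\,{\rm id})})=\Aut(X,\cdot)$. Combined with \coref{autfs}, which identifies the function centralizers with the automorphism groups of left-nondegenerate $\fse$-solutions, this exhibits the function centralizers as a subclass of the $\fse$-solution automorphism groups; the group $\Aut(X,\cdot)$ constructed above lies in the latter class but not the former, so the inclusion is strict.

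The main obstacle is selecting an invariant that survives the passage from the controllable quotient $G$ to the full group $\Aut(X,\cdot)\cong\bigl(\prod_c\Sigma_c\bigr)\rtimes G$, whose kernel $\prod_c\Sigma_c$ one does not control. The naive route---taking $G$ finite with no nontrivial cyclic quotient and invoking \thref{th:isnotcentr}(\ref{item:1fnnotok})---fails, since that criterion is stated only for finite groups while the kernel is in general a nontrivial and possibly infinite product of full symmetric groups, making $\Aut(X,\cdot)$ infinite and, through sign characters, endowing it with abelian quotients that wreck perfectness. Passing to the divisible group $G=\QQ$ and working at the level of abelianizations via \coref{cor:abautf} is what circumvents this difficulty: the divisible summand $\QQ$ of $\Aut(X,\cdot)_{ab}$ persists regardless of the size or structure of $\prod_c\Sigma_c$.
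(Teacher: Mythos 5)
Your proposal is correct and follows essentially the same route as the paper: apply \thref{th:allgpsquotkimura} to $G=(\QQ,+)$, rule out $\Aut(f)$ via the abelianization constraint of \coref{cor:abautf}, and pass from Kimura semigroups to $\fse$ solutions through the identity quasi-endomorphism together with \coref{autfs}. Your spelling-out of the divisibility step via reducedness (a product of cyclic groups is reduced, so it cannot have $\QQ$ as a direct summand) is in fact slightly more careful than the paper's terse "divisible versus not divisible" remark.
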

\begin{proof}
  The second claim follows from the first, given that A Kimura semigroup provides an $\fse$ solution as in \thref{refqm} when supplemented with the identity quasi-endomorphism.

  As for the first claim, take $G=(\QQ,+)$ in \thref{th:allgpsquotkimura}. The theorem says that some $\Aut(X,\cdot)$ admits $G$ as a split quotient, but this is not so for groups of the form $\Aut(f)$.

  To see this, note first that a split quotient in $\Aut(f)$ would be one in the abelianization $\Aut(f)_{ab}$, which in turn is a product of cyclic groups by \coref{cor:abautf}. That $(\QQ,+)$ is not a split quotient thereof follows, for instance, from the fact that $(\QQ,+)$ is {\it divisible} \cite[p.320, Definition following Exercise 10.17]{rot_gp} whereas products of cyclic groups are easily seen not to be.
\end{proof}

\begin{proof}[Proof of \thref{th:allgpsquotkimura}]
  In handling Kimura semigroups, we will use the language of \thref{th:refqmbis}. According to it (or rather a paraphrase thereof), such a structure consists of a set decomposed as a direct product $A\times B$ together with a surjective map
  \begin{equation*}
    k:S\to A\times B.
  \end{equation*}
  Indeed, this is a repackaging of the idempotent function $h:X\to X$ in the statement of \thref{th:refqmbis} via
  \begin{equation*}
    A\times B = \im~h,\quad S=X\setminus \im~h,\quad k=h|_S.
  \end{equation*}
  In this framework, \thref{th:refqmbis} (\ref{item:automeansthis}) then describes $\Aut(X,\cdot)$ as the group of invertible intertwiners of $k$, splitting as a product of separate permutations on $A$ and $B$:
  \begin{equation}\eqlabel{eq:autxdot}
    \Aut(X,\cdot)\cong \{(\sigma_S,\ \sigma_A,\ \sigma_B)\ |\ \sigma_{\bullet}\in \Sigma_{\bullet}\textrm{ for $\bullet=S,A,B$ and }(\sigma_A\times\sigma_B)\circ k = k\circ\sigma_S\}.
  \end{equation}
  Given $\sigma_A\in \Sigma_A$ and $\sigma_B\in \Sigma_B$, a permutation $\sigma_S$ fitting into \equref{eq:autxdot} exists if and only if
  \begin{equation}\eqlabel{eq:samecardab}
    |k^{-1}(a, b)| = |k^{-1}(\sigma_A a,\sigma_B b)|,\ \forall (a,b)\in A\times B.
  \end{equation}
  Consider, now, the group $G\le \Sigma_A\times\Sigma_B$ consisting of pairs $(\sigma_A,\sigma_B)$ satisfying \equref{eq:samecardab}. Casting the cardinal numbers $|k^{-1}(a,b)|$ as colors, the assignment
  \begin{equation*}
    A\times B\ni (a,b)\xmapsto{\quad} c_{a,b}:=|k^{-1}(a,b)|
  \end{equation*}
  can be thought of as a {\it bipartite} \cite[\S 1.6]{Die00} edge-colored graph with $(A,B)$ as its bipartition. Fixing bijections
  \begin{equation*}
    k^{-1}(a,b)\cong k^{-1}(a',b')\textrm{ whenever }c_{a,b}=c_{a',b'}
  \end{equation*}
  will then give a decomposition
  \begin{equation*}
    \Aut(X,\cdot)\cong \left(\prod_{a,b}\Sigma_{c_{a,b}}\right)\rtimes G,
  \end{equation*}
  where for a cardinal $\alpha$ we write $\Sigma_{\alpha}$ for the permutation group of a set of that cardinality. We will be done, then, taking $C:=A\times B$, once we argue that every group $G$ arises as the automorphism group of an edge-colored bipartite graph (where automorphisms of such a structure are understood to preserve colors). We relegate this to \leref{le:coloredbipart}, which will finish the present proof.
\end{proof}

\begin{lemma}\lelabel{le:coloredbipart}
  Every group $G$ is the automorphism group of some bipartite edge-colored graph.
\end{lemma}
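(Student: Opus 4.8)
The plan is to realize $G$ through its regular representation, encoded as a coloring of the complete bipartite graph whose two vertex classes are both copies of the underlying set of $G$. Concretely, I would set $A=B=G$, form the complete bipartite graph with classes $A$ and $B$, and color the edge joining $a\in A$ to $b\in B$ by the group element $a^{-1}b\in G$, so that the palette of available colors is $G$ itself. As in the application in \thref{th:allgpsquotkimura}, an automorphism of such a structure is a pair $(\sigma,\tau)\in\Sigma_A\times\Sigma_B$ preserving the bipartition and every edge color.

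The core of the argument is then a brief computation pinning down the color-preserving pairs. The requirement that $(\sigma,\tau)$ preserve colors reads $(\sigma a)^{-1}(\tau b)=a^{-1}b$ for all $a,b\in G$, equivalently $\tau(b)=\sigma(a)\,a^{-1}\,b$. Since the right-hand side cannot depend on $a$, the product $\sigma(a)\,a^{-1}$ must equal a constant $g\in G$; substituting back yields $\sigma=\tau=L_g$, the left translation $x\mapsto gx$. Conversely, every left translation preserves the coloring, because $(gx)^{-1}(gy)=x^{-1}y$. Thus $g\mapsto(L_g,L_g)$ is an isomorphism of $G$ onto the automorphism group of the colored graph, which is exactly the assertion.

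The only genuine subtlety, and the step I expect to be the real obstacle, is guaranteeing that no spurious automorphisms slip in beyond the translations. The rigidity above succeeds precisely because the matrix $(a^{-1}b)_{a,b}$ is a Latin square—essentially the Cayley table of $G$—so each color occurs exactly once in every row and in every column, and this is what forces $\sigma(a)\,a^{-1}$ to be independent of $a$. A second, more cosmetic point arises because both vertex classes equal $G$, so one might worry about automorphisms interchanging $A$ and $B$; under the ordered-bipartition convention used here (and in \thref{th:allgpsquotkimura}) such maps are excluded by definition, but should one wish to permit arbitrary vertex permutations it suffices to break the symmetry—for instance by drawing the colors incident to $A$ and to $B$ from disjoint palettes, or by appending a distinguishing marker to one class—none of which disturbs the computation of the automorphism group.
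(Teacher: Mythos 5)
Your proof is correct, but it takes a genuinely different route from the paper's. The paper fixes a monoid generating set $S$ of $G$, forms the Cayley graph $\Gamma(G,S)$ with edges $g\to gs$ colored by $s$, and then converts this edge-colored \emph{directed} graph into a bipartite undirected one by taking $(G,S)$ as the bipartition and splitting each color $s$ into two colors $s_0,s_1$ to encode the orientation. You instead color the complete bipartite graph on two copies of $G$ by the Cayley table, $c(a,b)=a^{-1}b$, and your rigidity computation --- that $\sigma(a)a^{-1}$ must be constant because $\tau(b)=\sigma(a)a^{-1}b$ cannot depend on $a$ --- is correct and pins the color-preserving pairs down to the diagonal left translations $(L_g,L_g)$. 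Both arguments ultimately exploit the rigidity of the regular representation, but yours is more self-contained: it needs no choice of generating set and no orientation-encoding trick, and it produces exactly the kind of object used in the proof of \thref{th:allgpsquotkimura} (a color assigned to \emph{every} pair $(a,b)\in A\times B$, matching the fiber-cardinality coloring $c_{a,b}=|k^{-1}(a,b)|$ there, with absent edges absorbed as the color $0$). The paper's version has the minor advantage of a potentially much smaller second vertex class ($S$ rather than a full copy of $G$). Your closing remarks on the two possible conventions (ordered versus unordered bipartition) correctly identify the one point that needs care, and the ordered convention is indeed the one in force in the application, where automorphisms are by definition pairs $(\sigma_A,\sigma_B)$.
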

\begin{proof}
  Given a set $S$ of monoid generators for $G$, recall \cite[p.357, Definition preceding Exercise 11.13]{rot_gp} that the {\it Cayley graph} $\Gamma(G,S)$ of $G$ with respect to $S$ is the directed graph with $G$ itself as its vertex set and an edge
  \begin{equation*}
    g\xrightarrow{\quad}gs,\quad \forall s\in S,\ g\in G.
  \end{equation*}
  Assigning $s\in S$ itself as a color, one can regard $\Gamma(G,S)$ as an {\it edge-colored} directed graph. Its automorphism group as such is then nothing but $G$: a bijection $\theta:G\to G$ preserves the (non-)existence of edges along with their colors if and only if
  \begin{equation*}
    \theta(gs) = \theta(g)s,\quad \forall s\in S,\ g\in G,
  \end{equation*}
  so
  \begin{align*}
    \theta(g) = g_0 g&\Longrightarrow \theta(gs) = g_0gs,\quad\forall s\in S,\ g\in G\\
                     &\Longrightarrow \theta(g)=g_0g,\quad\forall g\in G
                       \quad\textrm{because $S$ generates $G$ as a monoid.}
  \end{align*}
  It now remains to observe that the edge-colored directed graph $\Gamma(G,S)$ can be repackaged as an edge-colored bipartite ({\it un}directed) graph $\Gamma$:
  \begin{itemize}
  \item the bipartition will be $(G,S)$;
  \item for every $s\in S$ we introduce two distinct symbols (colors) $s_i$, $i=0,1$;
  \item and for every oriented edge
    \begin{equation*}
      g\xrightarrow{\quad s\quad}gs
    \end{equation*}
    in the original graph $\Gamma(G,S)$ there is an $s_0$-colored edge in $\Gamma$ connecting $g\in G$ and $s\in S$, and an $s_1$-colored edge connecting $s\in S$ and $gs\in G$.
  \end{itemize}
  That the automorphism group of $\Gamma$ as an edge-colored graph equals that of $\Gamma(G,S)$ (and is thus $G$) is a simple exercise.
\end{proof}

\subsection*{Supporting Data}
No datasets were generated or analyzed during the current study.

\subsection*{Competing Interests}
The authors have no competing interests to declare that are relevant to the content of this article.



\begin{thebibliography}{10}

\bibitem{ahs}
J.~Ad\'{a}mek, H.~Herrlich, and G.~E. Strecker.
\newblock {\em Abstract and concrete categories}.
\newblock Pure and Applied Mathematics (New York). John Wiley \& Sons, Inc.,
  New York, 1990.
\newblock The joy of cats, A Wiley-Interscience Publication.

\bibitem{andrews}
G.~E. Andrews and K.~Eriksson.
\newblock {\em Integer partitions}.
\newblock Cambridge University Press, Cambridge, 2004.

\bibitem{Baaj}
S.~Baaj and G.~Skandalis.
\newblock Unitaires multiplicatifs et dualit\'{e} pour les produits crois\'{e}s
  de {$C^*$}-alg\`ebres.
\newblock {\em Ann. Sci. \'{E}cole Norm. Sup. (4)}, 26(4):425--488, 1993.

\bibitem{babai_planar-1}
L.~Babai.
\newblock Automorphism groups of planar graphs. {I}.
\newblock {\em Discrete Math.}, 2(4):295--307, 1972.

\bibitem{babai_planar-2}
L.~Babai.
\newblock Automorphism groups of planar graphs. {II}.
\newblock In {\em Infinite and finite sets ({C}olloq., {K}eszthely, 1973;
  dedicated to {P}. {E}rd\H{o}s on his 60th birthday), {V}ols. {I}, {II},
  {III}}, Colloq. Math. Soc. J\'{a}nos Bolyai, Vol. 10, pages 29--84.
  North-Holland, Amsterdam, 1975.

\bibitem{bai_guo}
C.~Bai, L.~Guo, Y.~Sheng, and R.~Tang.
\newblock Post-groups, ({L}ie-){B}utcher groups and the {Y}ang-{B}axter
  equation, \emph{ Math. Ann.} (2023).
\newblock https://doi.org/10.1007/s00208-023-02592-z.

\bibitem{baxter}
R.~J. Baxter.
\newblock Partition function of the eight-vertex lattice model.
\newblock {\em Ann. Physics}, 70:193--228, 1972.

\bibitem{BVSS}
V.~V. Bazhanov and S.~M. Sergeev.
\newblock Zamolodchikov's tetrahedron equation and hidden structure of quantum
  groups.
\newblock {\em J. Phys. A}, 39(13):3295--3310, 2006.

\bibitem{BeidarFS97}
K.~I. Beidar, Y.~Fong, and A.~Stolin.
\newblock On {F}robenius algebras and the quantum {Y}ang-{B}axter equation.
\newblock {\em Trans. Amer. Math. Soc.}, 349(9):3823--3836, 1997.

\bibitem{bergman}
G.~M. Bergman and A.~O. Hausknecht.
\newblock {\em Co-groups and co-rings in categories of associative rings},
  volume~45 of {\em Mathematical Surveys and Monographs}.
\newblock American Mathematical Society, Providence, RI, 1996.

\bibitem{bieden}
L.~C. Biedenharn and J.~D. Louck.
\newblock {\em Angular momentum in quantum physics}, volume~8 of {\em
  Encyclopedia of Mathematics and its Applications}.
\newblock Addison-Wesley Publishing Co., Reading, Mass., 1981.
\newblock Theory and application, With a foreword by Peter A. Carruthers.

\bibitem{BKSV}
M.~Bonatto, M.~Kinyon, D.~Stanovsk\'{y}, and P.~Vojt\v{e}chovsk\'{y}.
\newblock Involutive {L}atin solutions of the {Y}ang-{B}axter equation.
\newblock {\em J. Algebra}, 565:128--159, 2021.

\bibitem{bond_itwr}
I.~V. Bondarenko.
\newblock Finite generation of iterated wreath products.
\newblock {\em Arch. Math. (Basel)}, 95(4):301--308, 2010.

\bibitem{brz}
T.~Brzezi\'{n}ski.
\newblock Trusses: between braces and rings.
\newblock {\em Trans. Amer. Math. Soc.}, 372(6):4149--4176, 2019.

\bibitem{CIM}
S.~Caenepeel, B.~Ion, and G.~Militaru.
\newblock The structure of {F}robenius algebras and separable algebras.
\newblock {\em $K$-Theory}, 19(4):365--402, 2000.

\bibitem{CMZ}
S.~Caenepeel, G.~Militaru, and S.~Zhu.
\newblock {\em Frobenius and separable functors for generalized module
  categories and nonlinear equations}, volume 1787 of {\em Lecture Notes in
  Mathematics}.
\newblock Springer-Verlag, Berlin, 2002.

\bibitem{cat1}
F.~Catino, M.~Mazzotta, and P.~Stefanelli.
\newblock Rota-{B}axter operators on {C}lifford semigroups and the
  {Y}ang-{B}axter equation.
\newblock {\em J. Algebra}, 622:587--613, May 15 2023.

\bibitem{cm1}
A.~Chirvasitu and G.~Militaru.
\newblock A universal-algebra and combinatorial approach to the set-theoretic
  {Y}ang-{B}axter equation, 2023.
\newblock https://arxiv.org/abs/2305.14138.

\bibitem{Verwimp}
K.~Cvetko-Vah and C.~Verwimp.
\newblock Skew lattices and set-theoretic solutions of the {Y}ang-{B}axter
  equation.
\newblock {\em J. Algebra}, 542:65--92, 2020.

\bibitem{davis}
R.~L. Davis.
\newblock The number of structures of finite relations.
\newblock {\em Proc. Amer. Math. Soc.}, 4:486--495, 1953.

\bibitem{bruijn}
N.~G. de~Bruijn.
\newblock Enumeration of mapping patterns.
\newblock {\em J. Combinatorial Theory Ser. A}, 12:14--20, 1972.

\bibitem{Die00}
R.~Diestel.
\newblock {\em Graph theory}, volume 173 of {\em Graduate Texts in
  Mathematics}.
\newblock Springer-Verlag, New York, second edition, 2000.

\bibitem{dimakis}
A.~Dimakis and F.~M\"{u}ller-Hoissen.
\newblock Simplex and polygon equations.
\newblock {\em SIGMA Symmetry Integrability Geom. Methods Appl.}, 11:Paper 042,
  49, 2015.

\bibitem{dr}
V.~G. Drinfel\ensuremath{'}d.
\newblock On some unsolved problems in quantum group theory.
\newblock In {\em Quantum groups ({L}eningrad, 1990)}, volume 1510 of {\em
  Lecture Notes in Math.}, pages 1--8. Springer, Berlin, 1992.

\bibitem{EPGS}
P.~Etingof and S.~Gelaki.
\newblock A method of construction of finite-dimensional triangular semisimple
  {H}opf algebras.
\newblock {\em Math. Res. Lett.}, 5(4):551--561, 1998.

\bibitem{etingof}
P.~Etingof, T.~Schedler, and A.~Soloviev.
\newblock Set-theoretical solutions to the quantum {Y}ang-{B}axter equation.
\newblock {\em Duke Math. J.}, 100(2):169--209, 1999.

\bibitem{euler}
L.~Euler.
\newblock De partitione numerorum.
\newblock {\em Novi Commentarii Academiae Scientiarum Petropolitanae},
  3:125--169, 1753.

\bibitem{GIVB}
T.~Gateva-Ivanova and M.~Van~den Bergh.
\newblock Semigroups of {$I$}-type.
\newblock {\em J. Algebra}, 206(1):97--112, 1998.

\bibitem{Grillet}
P.-A. Grillet.
\newblock {\em Semigroups}, volume 193 of {\em Monographs and Textbooks in Pure
  and Applied Mathematics}.
\newblock Marcel Dekker, Inc., New York, 1995.
\newblock An introduction to the structure theory.

\bibitem{harary}
F.~Harary.
\newblock The number of functional digraphs.
\newblock {\em Math. Ann.}, 138:203--210, 1959.

\bibitem{harary_graph-book}
F.~Harary.
\newblock {\em Graph theory}.
\newblock Addison-Wesley Publishing Co., Reading, Mass.-Menlo Park,
  Calif.-London, 1969.

\bibitem{harris}
B.~Harris and L.~Schoenfeld.
\newblock The number of idempotent elements in symmetric semigroups.
\newblock {\em J. Combinatorial Theory}, 3:122--135, 1967.

\bibitem{Howie}
J.~M. Howie.
\newblock {\em Fundamentals of semigroup theory}, volume~12 of {\em London
  Mathematical Society Monographs. New Series}.
\newblock The Clarendon Press, Oxford University Press, New York, 1995.
\newblock Oxford Science Publications.

\bibitem{APPJ23}
P.~Jedli\v{c}ka and A.~Pilitowska.
\newblock Indecomposable involutive solutions of the {Y}ang-{B}axter equation
  of multipermutation level 2 with non-abelian permutation group.
\newblock {\em J. Combin. Theory Ser. A}, 197:Paper No. 105753, 35, 2023.

\bibitem{Kim2}
N.~Kimura.
\newblock The structure of idempotent semigroups. {I}.
\newblock {\em Pacific J. Math.}, 8:257--275, 1958.

\bibitem{lamrad}
L.~A. Lambe and D.~E. Radford.
\newblock {\em Introduction to the quantum {Y}ang-{B}axter equation and quantum
  groups: an algebraic approach}, volume 423 of {\em Mathematics and its
  Applications}.
\newblock Kluwer Academic Publishers, Dordrecht, 1997.

\bibitem{land}
E.~Landau.
\newblock \"{U}ber die maximalordnung der permutationen gegebenen grades.
\newblock {\em Arch. Math. Phys. Ser. 3}, 5:92--103, 1903.

\bibitem{zbMATH03277468}
L.~Lov{\'a}sz.
\newblock Operations with structures.
\newblock {\em Acta Math. Acad. Sci. Hung.}, 18:321--328, 1967.

\bibitem{LuYZ}
J.-H. Lu, M.~Yan, and Y.-C. Zhu.
\newblock On the set-theoretical {Y}ang-{B}axter equation.
\newblock {\em Duke Math. J.}, 104(1):1--18, 2000.

\bibitem{mcl}
S.~Mac~Lane.
\newblock {\em Categories for the working mathematician}, volume~5 of {\em
  Graduate Texts in Mathematics}.
\newblock Springer-Verlag, New York, second edition, 1998.

\bibitem{MilG}
G.~Militaru.
\newblock Heisenberg double, pentagon equation, structure and classification of
  finite-dimensional {H}opf algebras.
\newblock {\em J. London Math. Soc. (2)}, 69(1):44--64, 2004.

\bibitem{nicol}
J.-L. Nicolas.
\newblock On {L}andau's function {$g(n)$}.
\newblock In {\em The mathematics of {P}aul {E}rd\H{o}s, {I}}, volume~13 of
  {\em Algorithms Combin.}, pages 228--240. Springer, Berlin, 1997.

\bibitem{prins_phd}
G.~C.~E. Prins.
\newblock {\em On the automorphism group of a tree}.
\newblock ProQuest LLC, Ann Arbor, MI, 1957.
\newblock Thesis (Ph.D.)--University of Michigan.

\bibitem{pt_bk}
A.~Pultr and V.~Trnkov\'{a}.
\newblock {\em Combinatorial, algebraic and topological representations of
  groups, semigroups and categories}, volume~22 of {\em North-Holland
  Mathematical Library}.
\newblock North-Holland Publishing Co., Amsterdam-New York, 1980.

\bibitem{read}
R.~C. Read.
\newblock A note on the number of functional digraphs.
\newblock {\em Math. Ann.}, 143:109--110, 1961.

\bibitem{rot_gp}
J.~J. Rotman.
\newblock {\em An introduction to the theory of groups}, volume 148 of {\em
  Graduate Texts in Mathematics}.
\newblock Springer-Verlag, New York, fourth edition, 1995.

\bibitem{ser_tr}
J.-P. Serre.
\newblock {\em Trees}.
\newblock Springer Monographs in Mathematics. Springer-Verlag, Berlin, 2003.
\newblock Translated from the French original by John Stillwell, Corrected 2nd
  printing of the 1980 English translation.

\bibitem{slo-A001372}
N.J.~A. Sloane.
\newblock The on-line encyclopedia of integer sequences.
\newblock \url{https://oeis.org/A001372}, 2023.
\newblock accessed 2023-01-29.

\bibitem{slo-A000081}
N.J.~A. Sloane.
\newblock The on-line encyclopedia of integer sequences.
\newblock \url{https://oeis.org/A000081}, 2023.
\newblock accessed 2023-01-29.

\bibitem{slo-A002861}
N.J.~A. Sloane.
\newblock The on-line encyclopedia of integer sequences.
\newblock \url{https://oeis.org/A002861}, 2023.
\newblock accessed 2023-01-29.

\bibitem{SV}
D.~Stanovsk\'{y} and P.~Vojt\v{e}chovsk\'{y}.
\newblock Idempotent solutions of the {Y}ang-{B}axter equation and twisted
  group division.
\newblock {\em Fund. Math.}, 255(1):51--68, 2021.

\bibitem{TDV}
D.~V. Talalaev.
\newblock Tetrahedron equation: algebra, topology, and integrability.
\newblock {\em Uspekhi Mat. Nauk}, 76(4(460)):139--176, 2021.

\bibitem{tits_arbre}
J.~Tits.
\newblock Sur le groupe des automorphismes d'un arbre.
\newblock In {\em Essays on {T}opology and {R}elated {T}opics ({M}\'{e}moires
  d\'{e}di\'{e}s \`a {G}eorges de {R}ham)}, pages 188--211. Springer, New York,
  1970.

\bibitem{WeXu}
A.~Weinstein and P.~Xu.
\newblock Classical solutions of the quantum {Y}ang-{B}axter equation.
\newblock {\em Comm. Math. Phys.}, 148(2):309--343, 1992.

\bibitem{wilf}
H.~S. Wilf.
\newblock The asymptotics of {$e^{P(z)}$} and the number of elements of each
  order in {$S_n$}.
\newblock {\em Bull. Amer. Math. Soc. (N.S.)}, 15(2):228--232, 1986.

\bibitem{yamada}
Miyuki Yamada.
\newblock Note on exclusive semigroups.
\newblock {\em Semigroup Forum}, 3(2):160--167, 1971/72.

\bibitem{Yang}
C.~N. Yang.
\newblock Some exact results for the many-body problem in one dimension with
  repulsive delta-function interaction.
\newblock {\em Phys. Rev. Lett.}, 19:1312--1315, 1967.

\bibitem{zam2}
A.~B. Zamolodchikov.
\newblock Tetrahedron equations and the relativistic {$S$}-matrix of
  straight-strings in {$2+1$}-dimensions.
\newblock {\em Comm. Math. Phys.}, 79(4):489--505, 1981.

\end{thebibliography}

\end{document}